\newtheorem{theorem}{Theorem}[section]
\newtheorem{assumption}[theorem]{Assumption}
\newtheorem{proposition}[theorem]{Proposition}
\newtheorem{corollary}[theorem]{Corollary}
\newtheorem{lemma}[theorem]{Lemma}
\newtheorem{remark}[theorem]{Remark}
\newtheorem{example}[theorem]{Example}
\newtheorem{examples}[theorem]{Examples}
\newtheorem{foo}[theorem]{Remarks}
\newtheorem{open question}[theorem]{Open Question}
\newtheorem{c/p}[theorem]{Conjecture/Proposition}
\newcommand{\norm}[1]{{\left\lVert{#1}\right\rVert}}
\newcommand{\abs}[1]{\left|#1\right|} 
\def\vint{\mathop{\mathchoice%
		{\setbox0\hbox{$\displaystyle\intop$}\kern 0.22\wd0%
			\vcenter{\hrule width 0.6\wd0}\kern -0.82\wd0}%
		{\setbox0\hbox{$\textstyle\intop$}\kern 0.2\wd0%
			\vcenter{\hrule width 0.6\wd0}\kern -0.8\wd0}%
		{\setbox0\hbox{$\scriptstyle\intop$}\kern 0.2\wd0%
			\vcenter{\hrule width 0.6\wd0}\kern -0.8\wd0}%
		{\setbox0\hbox{$\scriptscriptstyle\intop$}\kern 0.2\wd0%
			\vcenter{\hrule width 0.6\wd0}\kern -0.8\wd0}}%
	\mathopen{}\int}
\title{Chaos processes as rough paths}
\author{Guang Yang}
\address{Department of Mathematics\\
	Purdue University\\
	West Lafayette, IN 47907, U.S.A. 
}
\email{yang2220@purdue.edu}
\begin{document}
\begin{abstract}
	In this article we investigate the rough paths structure of a process $X_t$ living in a fixed Wiener chaos. Specifically, we formulate various types of rough lifts of $X_t$ and study their properties. As application, we study the integrabilities of quantities related to rough differential equations driven by $X_t$. 
\end{abstract}

\maketitle
\tableofcontents

\section{Introduction}
Since the seminal paper by T. Lyons \cite{MR1654527}, rough path theory has been developed rapidly in the past two decades. Among the most prominent directions, is the study of Gaussian rough paths (see \cite{MR1883719} \cite{MR2667703} ), which has led to a wide variety of interesting results. As a natural counterpart to this development, significant progresses have been achieved in the study of rough differential equations (RDEs) driven by Gaussian rough paths. Such differential equations can be written in the form
\begin{equation}
	\label{RDE in introduction}
	Y_t=y_0+\sum_{i=1}^{d}\int_{0}^{t}V_i(Y_s)d\boldsymbol{X}^i_t+\int_{0}^{t}V_0(Y_s)ds,\ y_0\in\mathbb{R}^d,\ t\in[0,1],
\end{equation}
where $\{V_i\}_{0\leq i\leq d}$ are vector fields on $\mathbb{R}^d$ with suitable regularity assumption and $\boldsymbol{X}_t$ is a Gaussian rough path.
 
Many of the rich structures associated with Gaussian processes fit in the study of such RDEs amazingly well. Interactions with Malliavin calculus had led to a thorough study of the law of the solution $Y_t$. In this direction, the existence and smoothness of the density of $Y_t$ were studied in \cite{MR2680405} and \cite{MR3298472}. Upper bound and strict positivity of the density and hitting probabilities were developed in \cite{MR3531675}. Regularity and ergodicity were studied in \cite{ MR3112925}. Adapting the isoperimetric inequality of Gaussian measures to the rough paths setting allows one to study the integrabilities of the Jacobian process $J^Y_t$ of $Y_t$ \cite{MR3112937}, which, in turn, enables the study of integrabilities of Malliavin derivatives of $Y_t$ \cite{MR3229800}. 

However, the study of the Malliavin derivatives of $Y_t$ in \cite{MR3229800} appears to be far more difficult than one would have expected. This is due to the fact that, with a Gaussian rough path, one can only write the ``directional derivatives'' of the form $\langle D^kY_t, h_1\otimes\cdots h_k \rangle_{\mathcal{H}^{\otimes k}}$ in a consistent way, which eventually leads only to the operator norm of $D^kY_t$. To get the the tensor norm (or equivalently, Hilbert-Schmidt norm) of $D^kY_t$, Inahama had to devise a rather technical method, whose starting point is the unique feature that the Malliavin derivative of a Gaussian random variable is deterministic.  
 
We will present this new idea that instead of a rough path above the process $X_t$ along, we can construct a rough path above the enlarged process $\hat{X}_t=(X_t, DX_t)$. We shall see later that when $X_t$ is Gaussian, this enlargement does not cause any extra difficulty for all the previous studies. The advantage of such endeavour is immediately apparent. With a rough path above $\hat{X}_t$, all Malliavin derivatives of $Y_t$ can be written as solutions to RDEs driven by $\hat{X}_t$ along linear vector fields (see section 5 below). Thus, the study of their integrabilities can be combined into a systematic study of linear RDEs driven by $\hat{X}_t$.  

This same development can be naturally carried our for processes whose Malliavin derivatives vanish above certain level, where we need to consider $\hat{X}_t=(X_t, DX_t, \cdots, D^nX_t)$. This leads to the so called chaos processes, which generalize Gaussian processes while still inherit many useful structures. The study of chaos random variables have gained increasing attentions in recent years (see, for instance, \cite{MR3003367} \cite{MR3035750} \cite{MR3498012}  \cite{MR4020062}). Our study shall focus on chaos processes. 

More specifically, we will fix $n\geq 1$ and consider a process $X_t\in \mathbb{R}^d$ parametrized on the compact interval $[0,1]$, whose components are independent copies of $I_n(f_t)$, where $I_n$ is the $n$-th multiple Wiener integral and $f_t\in \mathcal{H}^{\otimes n}$, where $\mathcal{H}$ is the Cameron-Martin space of the underlying Wiener space. Note that when $n=1$, $X_t$ is Gaussian.

Our study mainly consists of two parts. First, we will construct rough lifts for $X_t$ and $\hat{X}_t$. Several properties of the rough lifts will be studied, including a large deviation principle. Then, as an application, we study the integrability of Malliavin derivatives and Jacobian process of solutions to RDEs of form \eqref{RDE in introduction} driven by $X_t$.

There are two main assumptions. The first one is the same assumption we used for Gaussian processes.
\begin{assumption}
	\label{Assumption 1}
	Let $X_t=I_n(f_t)$ be a real-valued chaos process. For $0\leq s,t\leq 1$, define $R(s,t)=R_X(s,t):=\mathbb{E}(X_sX_t)$. We assume that $R(s,t)$ has finite $\rho$-variation on $[0,1]\times [0,1]$ for some $\rho\in[1,3/2)$. Moreover, we assume the $\rho$-variation of $R(s,t)$ is H\"older-controlled in the sense that
	\[ \norm{R}^\rho_{\rho-var;[s,t]\times [s,t]}\leq \abs{t-s}. \]	
\end{assumption}
It is worth pointing out that the H\"older controlled property is not restrictive as one can always reparameterize the process $X_t$ (see \cite{MR3298472}).

A simple application of the moment equivalence of Wiener chaos gives that for any $p>1$,
\[\mathbb{E}\abs{X_t-X_s}^p\leq (p-1)^{n/2}\left(\mathbb{E}\abs{X_t-X_s}^2\right)^{\frac{p}{2}}\leq (p-1)^{n/2} \abs{t-s}^{\frac{p}{2\rho}}.  \]
By Kolmogorov continuity theorem we see that assumption \ref{Assumption 1} implies $X_t$ is $1/(2\rho')$-H\"older continuous (or has finite $2\rho'$-variation) for any $\rho'>\rho$ almost surely. As $\rho\in [1, 3/2)$, we only need to construct the second order iterated integral for $X_t$ to get a rough lift. 

It is hardly a surprise that an assumption on covariance along is not sufficient for all our purposes. We shall encounter many difficulties that did not appear for Gaussian rough paths. For instance, when we apply translation in the direction of $\mathcal{H}$, one has the Malliavin-Stroock formula 
\begin{equation*}
	X_t(\omega+rh)=X_t(\omega)+\sum_{k=1}^{n}\frac{r^k}{k!}\cdot\langle D^kX_t(\omega), h^{\otimes k} \rangle_{\mathcal{H}^{\otimes k}},\ r>0,\ h\in\mathcal{H}.
\end{equation*}
To get the rough lift of $ X_t(\omega+rh)$, we must define integrals of the form
\begin{equation}
		\label{Cross integraltion in introduction}
		\int_{s}^{t}D^mX_{s,r}\otimes dD^kX_r,\ 0\leq m,k,\leq n,
\end{equation}
where $X_{s,r}=X_r-X_s$. This observation is another motivation for us to consider the rough lift of $\hat{X}_t$ instead of $X_t$, since terms in \eqref{Cross integraltion in introduction} are precisely the second level iterated integrals of $\hat{X}_t$. It turns out that one needs a condition genuinely stronger than assumption \ref{Assumption 2} in order to handle terms in \eqref{Cross integraltion in introduction}. We will come back to this point with more details in section 3 below.

The following assumption, enhanced from assumption \ref{Assumption 1} by adding what emerges as the condition needed to deal with terms in \eqref{Cross integraltion in introduction}, will be our second assumption that replaces assumption \ref{Assumption 1} when needed.

Recall for $a,b\in \mathcal{H}^{\otimes n}$ that are symmetric, the symmetric contraction of degree $r$ is defined to be $a\hat{\otimes}_rb=\widehat{\langle a,b \rangle}_{\mathcal{H}^{\otimes r}}\in\mathcal{H}^{\otimes (2n-2r)}$ for $1\leq r\leq n$, where $\widehat{\langle a,b \rangle}_{\mathcal{H}^{\otimes r}}$ is the symmetrization of $\langle a,b \rangle_{\mathcal{H}^{\otimes r}}$. More details about contractions are given in section 2.
\begin{assumption}
	\label{Assumption 2}
	Let $X_t=I_n(f_t)$ be a real-valued chaos process. We assume that there exists a control $\omega:[0,1]^2\rightarrow [0,\infty) $, such that for some $\rho\in[1,3/2)$,  $1\leq r \leq n$ and any $[s,t],[u,v]\subset [0,1]$, we have
	\begin{equation}
		\label{Assumption 2 main equation}
		\norm{f_{s,t}\hat{\otimes}_{r}f_{u,v}}_{\mathcal{H}^{\otimes (2n-2r)}}\leq \omega^{\frac{1}{\rho}}([s,t]\times [u,v]).
	\end{equation}
	Moreover, we assume $\omega$ satisfies
	\[ \omega([s,t]\times [s,t])\leq \abs{t-s}. \]	
\end{assumption}
\begin{remark}
	The contractions $f_{s,t}\hat{\otimes}_{r}f_{u,v}$ appear in \eqref{Assumption 2 main equation} are frequently used to control the product of chaos random variables  and have naturally appeared in many studies of chaos random variables (see section 2 below).
\end{remark}
\begin{remark}
	We can give assumption \ref{Assumption 2} a more intuitive meaning. If we further assume a non-degenerate lower bound  $0<C\leq\norm{f_{r}}_{\mathcal{H}^{\otimes n}}$ for all $ r\in [s,t]$, then assumption \eqref{Assumption 2}  implies 
	\[ d_W(X_{r}, N)\leq C'\abs{r}^{\frac{1}{\rho}},\ \forall r\in [s,t],    \]
	where $d_W$ is the Wasserstein distance and $N$ denotes a standard normal distribution. From this perspective, assumption \ref{Assumption 2} is essentially a regularity assumption on any interval where $X_t$ is non-degenerate. We point out that the above inequality is still valid with Kolmogorov distance and total variation distance. We refer to chapter 5 of \cite{MR2962301} for more details.  
\end{remark}
\begin{remark}
	When $r=n$, inequality \eqref{Assumption 2 main equation} becomes
	\[ \abs{\mathbb{E}(X_{s,t}X_{u,v})}=n!\abs{\langle f_{s,t}, f_{u,v} \rangle_{\mathcal{H}^{\otimes n}} }=n!\abs{f_{s,t}\hat{\otimes}_nf_{u,v}}\leq n!\cdot\omega^{\frac{1}{\rho}}([s,t]\times [u,v]),  \]
	which is nothing but assumption \ref{Assumption 1}. In particular, when $n=1$, assumption \ref{Assumption 1} and assumption \ref{Assumption 2} are identical. Hence, assumption \ref{Assumption 2} can be regraded as a natural generalization of assumption \ref{Assumption 1} to non-Gaussian Wiener chaos settings.
\end{remark}
	In section 3, we will give an easy method to construct processes that satisfy assumption \ref{Assumption 2}.

The rest of the paper is organized as follows. In section 2, we gather some preliminary materials. Section 3 is devoted to the construction of rough lifts of $X_t$ and $\hat{X}_t$. Different types of approximations are also discussed. Section 4 develops a large deviation principle for $X_t$ and its rough lift, while section 5 studies the integrability of the Malliavin derivatives and Jacobian process of solutions to RDEs driven by $X_t$. 

\textbf{Notations}: Throughout this paper, we use $\abs{\cdot}$ for Euclidean norms. Norms in Banach spaces will generally be denoted as $\norm{\cdot}$ if there is no risk of ambiguity. For $\alpha\in(0,1)$, we will write the $\alpha$-H\"older constant for a function $F_t$ as
\[ \norm{F}_{\alpha-\text{H\"ol}}:= \sup_{0\leq s<t\leq 1}\frac{\norm{F_t-F_s}}{\abs{t-s}^{\alpha}}. \] 
We will also use
\[ \norm{f_t}_{\infty}:=\sup_{t\in[0,1]}\norm{f_t}.  \]
Difference $X_t-X_s$ will be denoted by $X_{s,t}$. For estimates of the form $\abs{x}\leq C\abs{y}$;
if the constant $C$ does not depend on $x$ or $y$, we will omit it and simply write $\abs{x}\preceq\abs{y}$.

\section{Preliminaries}
\subsection{Wiener chaos}
Let $\mathcal{H}$ be a real separable Hilbert space. We say $W=\{W(h):\ h\in\mathcal{H} \}$ is an isonormal Gaussian process over $\mathcal{H}$, if $W$ is a family of centered Gaussian random variables defined on some complete probability space $(\Omega, \mathcal{F}, \mathbb{P})$ such that
\begin{equation*}
	\mathbb{E}(W(h)W(g))=\langle h,g \rangle_{\mathcal{H}}.
\end{equation*}
We will further assume that $\mathcal{F}$ is generated by $W$.

For every $k\geq 1$, we denote by $\mathcal{H}_k$ the $k$-th homogeneous Wiener chaos of $W$ defined as the closed subspace of $L^2(\Omega)$ generated by the family of random variables $\{H_k(W(h)):\ h\in\mathcal{H} \}$ where $H_k$ is the $k$-th Hermite polynomial given by
\begin{equation*}
	H_k(x)=(-1)^ke^{\frac{x^2}{2}}\frac{d^k}{dx^k}\left(e^{-\frac{x^2}{2}} \right).
\end{equation*} 
$H_0$ is by convention defined to be $\mathbb{R}$.

For any $k\geq 1$, we denote by $\mathcal{H}^{\otimes k}$ the $k$-th tensor product of $\mathcal{H}$. If $\phi_1, \phi_2,\cdots ,\phi_n\in\mathcal{H}$, we define the symmetrization of $\phi_1\otimes \cdots \otimes \phi_n$ by
\[\phi_1\hat{\otimes}\cdots \hat{\otimes} \phi_n=\frac{1}{n!}\sum_{\sigma\in\Sigma_n}\phi_{\sigma(1)}\otimes \cdots \otimes \phi_{\sigma(n)},\]
where $\Sigma_n$ is the symmetric group of $\{1,2,\cdots,n\}$. The symmetrization of $\mathcal{H}^{\otimes k}$ is denoted by $\mathcal{H}^{\hat{\otimes} k}$. We consider $f\in\mathcal{H}^{\hat{\otimes} n}$ of the form
\[f=e_{j_1}^{\hat{\otimes} k_1}\hat{\otimes} e_{j_2}^{\hat{\otimes} k_2} \hat{\otimes} \cdots \hat{\otimes} e_{j_m}^{\hat{\otimes} k_{m}},  \]
where $\{e_i\}_{i\geq 1}$ is an orthonormal basis of $\mathcal{H}$ and $k_1+\cdots +k_m=n$. The multiple Wiener-It\^o integral of $f$ is defined as
\[I_n(f)=H_{k_1}(W(e_{j_1})) \cdots H_{k_m}(W(e_{j_m})). \]
If $f,g\in \mathcal{H}^{\hat{\otimes} n}$ are of the above form, we have the following isometry 
\[\mathbb{E}(I_n(f)I_n(g))=n!\langle f,g \rangle_{\mathcal{H}^{\otimes n}}. \]
For general elements in $ \mathcal{H}^{\hat{\otimes} n}$, the multiple Wiener-It\^o integrals are defined by $L^2$ convergence with the previous isometry equality.

Let $\mathcal{G}$ be the $\sigma$-algebra generated by $\{W(h), h\in\mathcal{H}\}$, then any random variable $F\in L^2(\Omega, \mathcal{G}, \mathbb{P})$ admits an orthonormal decomposition (Wiener chaos decomposition) of the form
\begin{equation*}
	F=\sum_{k=0}^{\infty}I_k(f_k),
\end{equation*}
where $f_0=\mathbb{E}(F)$ and $f_k\in\mathcal{H}^{\hat{\otimes}k}$ are uniquely determined by $F$.

One of the most appealing properties of chaos random variables is the moment equivalence, which we would frequently employ.
\begin{proposition}
	\label{Moment equivalence}
	Let $f\in\mathcal{H}^{\hat{\otimes}n}$ and $X=I_n(f)$. Then, for any $1<p,q<+\infty$, we have
	\[\norm{X}_{L^p(\Omega)}\leq \norm{X}_{L^q(\Omega)}\leq \left(\frac{q-1}{p-1} \right)^{n/2}\norm{X}_{L^q(\Omega)}.\]
\end{proposition}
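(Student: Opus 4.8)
The plan is to derive the proposition from Nelson's hypercontractivity inequality for the Ornstein--Uhlenbeck semigroup $\{P_t\}_{t\ge0}$ on $L^2(\Omega,\mathcal{G},\mathbb{P})$. The only structural fact about multiple integrals that enters is that $P_t$ acts as a scalar on each homogeneous chaos, $P_tI_n(f)=e^{-nt}I_n(f)$ for every $f\in\mathcal{H}^{\hat\otimes n}$, which is immediate from the Mehler formula for $P_t$ together with the definition of $I_n$ in terms of Hermite polynomials.

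First I would dispose of the left-hand inequality. Since $\mathbb{P}$ is a probability measure, $r\mapsto\norm{X}_{L^r(\Omega)}$ is nondecreasing by Jensen's inequality, so $\norm{X}_{L^p(\Omega)}\le\norm{X}_{L^q(\Omega)}$ whenever $p\le q$; this is the first bound, and since the roles of $p$ and $q$ are symmetric it is harmless to assume from now on that $1<p\le q<\infty$, hence $(q-1)/(p-1)\ge1$. It then remains to establish the substantive estimate $\norm{X}_{L^q(\Omega)}\le\big((q-1)/(p-1)\big)^{n/2}\norm{X}_{L^p(\Omega)}$.

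Next, choose $t_0\ge0$ so that $e^{2t_0}=(q-1)/(p-1)$. Nelson's theorem asserts that under precisely this relation $P_{t_0}\colon L^p(\Omega)\to L^q(\Omega)$ is a contraction, i.e.\ $\norm{P_{t_0}F}_{L^q(\Omega)}\le\norm{F}_{L^p(\Omega)}$ for all $F\in L^p(\Omega)$. Taking $F=X=I_n(f)$ and using $P_{t_0}X=e^{-nt_0}X$ yields
\[ e^{-nt_0}\norm{X}_{L^q(\Omega)}=\norm{P_{t_0}X}_{L^q(\Omega)}\le\norm{X}_{L^p(\Omega)}, \]
and multiplying through by $e^{nt_0}=\big((q-1)/(p-1)\big)^{n/2}$ gives exactly the claimed inequality.

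The one non-elementary ingredient is Nelson's hypercontractivity bound itself, which I would simply quote from the literature (e.g.\ \cite{MR2962301}); a from-scratch argument — proving the scalar Gaussian hypercontractivity inequality for one-dimensional Ornstein--Uhlenbeck via the Mehler kernel and an optimization of the exponent, then tensorizing over an orthonormal basis of $\mathcal{H}$ and passing to the $L^2$-limit — is the real work, but it is entirely standard. Thus the main obstacle here is conceptual, namely recognizing hypercontractivity as the right tool, rather than computational.
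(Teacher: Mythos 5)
Your proof is correct and is precisely the standard hypercontractivity argument that the paper implicitly relies on (it states this proposition without proof as a known fact, cf.\ \cite{MR2962301}): monotonicity of $L^r$-norms for the trivial direction, then Nelson's bound combined with $P_tI_n(f)=e^{-nt}I_n(f)$ and the choice $e^{2t_0}=(q-1)/(p-1)$. Note also that you have silently (and correctly) repaired a typo in the statement, whose right-hand side should read $\left(\frac{q-1}{p-1}\right)^{n/2}\norm{X}_{L^p(\Omega)}$ rather than $\norm{X}_{L^q(\Omega)}$.
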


Suppose that $f\in\mathcal{H}^{\hat{\otimes} n}$ and $g\in\mathcal{H}^{\hat{\otimes} m}$, we define the symmetric contractions of $f$ and $ g$ as
\[ f\hat{\otimes}_rg=\widehat{\langle f,g \rangle}_{\mathcal{H}^{\otimes r}},\ 1\leq r\leq n\wedge m,  \]
where $\widehat{\langle f,g \rangle}_{\mathcal{H}^{\otimes r}}$ means the symmetrization of $\langle f,g \rangle_{\mathcal{H}^{\otimes r}}$. Note that since $f,g$ are symmetric, we do not have to specify with respect to which $r$ components are the scalar products taken. The product formula of chaos random variables can be written in terms of these contractions
\begin{equation*}
	I_n(f)\cdot I_m(g)=\sum_{r=0}^{m\wedge n}r\binom{n}{r}\binom{m}{r}I_{n+m-2r}(f\hat{\otimes}_rg).
\end{equation*}

\subsection{Malliavin calculus}
Let $\mathcal{H}$ and $W$ be as previous subsection. Let $\mathcal{FC}^\infty$ denote the set of cylindrical random variables of the form
\[  F=f(W(h_1), \cdots ,W(h_n )), \]
where $n\geq 1$, $h_i\in \mathcal{H}$ and $f\in C^{\infty}_b(\mathbb{R}^n)$; that means $f$ is a smooth function on $\mathbb{R}^n$ bounded with all derivatives. The Malliavin derivative of $F$ is a $\mathcal{H}$-valued random variable defined as
\begin{equation*}
	DF=\sum_{i=1}^{n}\frac{\partial f}{\partial x_i}(W(h_1), \cdots ,W(h_n ))h_i.
\end{equation*}
One can relate $DF$ with a directional derivative as
\[ \lim_{r\rightarrow \infty}\frac{F(\omega+rh)-F(\omega)}{r}=\langle DF, h \rangle_{\mathcal{H}},\ \forall h\in\mathcal{H}.  \]
By iteration, one can define the $k$-th Malliavin derivative of $F$ as a $\mathcal{H}^{\otimes k}$-valued random variable. For $m,p\geq 1$, we denote $\mathbb{D}^{m,p}$ the closure of $\mathcal{FC}^\infty$ with respect to the norm
\[ \norm{F}^p_{m,p}=\mathbb{E}(\abs{F}^p)+\sum_{k=1}^{m}\mathbb{E}\left(\norm{D^kF }^p_{\mathcal{H}^{\otimes k}}  \right).\]
We define $\mathbb{D}^{\infty}=\cap_{m,p\geq 1}\mathbb{D}^{m,p}$.

\subsection{Rough path and rough differential equation}
Let $\mathscr{X}$ be a Banach space. For $p\in [2,3)$, we define the space of $p$ rough path on $\mathscr{X}$ parameterized on the interval $[0,1]$, in symbols $\mathscr{C}^{p-var}([0,1], \mathscr{X})$, as those pairs $\boldsymbol{X}=:(X, \mathbb{X}) \in C([0,1], \mathscr{X}\oplus \mathscr{X}^{\otimes 2})$ such that
\begin{align*}
	\norm{X}_{p-var;[0,1]}&=\left(\sup_{(s,t)\in D([0,1])}\norm{X_{s,t}}^p \right)^{\frac{1}{p}}<+\infty,\\
	\norm{\mathbb{X}}_{p/2-var;[0,1]}&=\left(\sup_{(s,t)\in D([0,1])}\norm{\mathbb{X}_{s,t}}^{p/2} \right)^{\frac{2}{p}}<+\infty,
\end{align*}
where the supreme is taken over all partitions of $[0,1]$. Moreover, for $0\leq s\leq u \leq t \leq 1$, we have the following Chen's relation 
\begin{equation*}
	\mathbb{X}_{s,t}=\mathbb{X}_{s,u}+\mathbb{X}_{u,t}+X_{s,u}\otimes X_{u,t}.
\end{equation*}
We will say $\boldsymbol{X}$ is a rough lift of $X$. One can equip $\mathscr{C}^{p-var}([0,1], \mathscr{X})$ with the homogeneous $p$-variation norm
\begin{equation*}
	\norm{\boldsymbol{X}}^p_{p-var;[0,1]}=\norm{X}^p_{p-var;[0,1]}+\norm{\mathbb{X}}^{p/2}_{p/2-var;[0,1]}.
\end{equation*}

At its most fundamental, we need to construct $\mathbb{X}_t$ from a given $X_t$ in order to get a rough lift, which in general is not unique. In the special case that $X\in C^{1-var}([0,1],\mathscr{X})$, $\mathbb{X}$ can be canonically defined as
\begin{equation*}
	\mathbb{X}_{s,t}=\int_{s}^{t}X_{s,r}dX_r,
\end{equation*}
where the integral is understood as Riemann–Stieltjes integral and it is easily checked that $(X,\mathbb{X}) \in\mathscr{C}^{p-var}([0,1], \mathscr{X})$ for any $p\geq 1$. For convenience, we often write the rough lift in this particular case as
\begin{align*}
	S_2: C^{1-var}([0,1],\mathscr{X}) &\rightarrow \mathscr{C}^{p-var}([0,1], \mathscr{X})\\
	X&\mapsto \boldsymbol{X}.
\end{align*}

  A rough path $\boldsymbol{X}\in \mathscr{C}^{p-var}([0,1], \mathscr{X})$ is said to be a geometric  rough path if we can find a sequence $\{X^k\}_{k\geq 1}\subset C^{1-var}([0,1],\mathscr{X})$ such that
\begin{equation}
	\label{Geometric rough path}
	\lim_{k\rightarrow \infty}\norm{\boldsymbol{X}-S_2(X^k)}_{p-var;[0,1]}\rightarrow 0.
\end{equation}
If $X^k$ can be chosen to be the piece-wise linear approximations of $X$ along a sequence of increasing partitions, then we often call $\boldsymbol{X}$ the natural rough lift of $X$. Another important property of geometric rough paths is the ``first order calculus''
\begin{equation}
	\label{First order calculus}
	Symm(\mathbb{X}_{s,t})=\frac{1}{2}X_{s,t}\otimes X_{s,t},
\end{equation}
where $Symm$ denotes the symmetric part. We denote the space of geometric rough paths by $G\mathscr{C}^{p-var}([0,1], \mathscr{X})$.

For a given geometric rough path $\boldsymbol{X}$, RDE of the form
\begin{equation*}
	dY_t=V(Y_t)d\boldsymbol{X}_t+V_0(Y_t)dt,\; Y_0=y_0\in\mathbb{R}^d, \; t\in [0,1],
\end{equation*}
where $V,V_0 \in C^2_b(V,L(V, V))$, always admits a unique solution. It is customary to write
\[ Y_t=\pi_V(0,y_0; \boldsymbol{X})(t), \ t\in[0,1],  \]
where $\pi$ is called the It\^o-Lyons map. Moreover, we have
\begin{equation*}
	\lim_{k\rightarrow \infty}\norm{\pi_V(0,y_0; \boldsymbol{X})-\pi_V(0,y_0; \boldsymbol{X}^k)}_{p-var;[0,1]}=0,
\end{equation*} 
where $\{\boldsymbol{X}^k\}_{k\geq 1}=\{S_2(X^k)\}_{k\geq 1}$ is any sequence with finite 1-variation that satisfy \eqref{Geometric rough path}.

There are two important quantities we will use later in our study. For $\alpha>0$, we define the accumulated $\alpha$-local variation as 
\[M_{\alpha}( \boldsymbol{X})=\sup_{\substack{(t_i,t_{i+1})\in D([0,1]) \\ \norm{\boldsymbol{X}}^p_{p-var;[t_i,t_{i+1}]}\leq \alpha}}\sum_{i}\norm{\boldsymbol{X}}^p_{p-var;[t_i,t_{i+1}]}.  \]
Next, we have the greedy sequence:
\begin{align*}
	\tau_0&=0\\
	\tau_{i+1}&=\inf\{t\in (\tau_{i}, 1]: \norm{\boldsymbol{X}}^p_{p-var;[\tau_{i},\tau_{i+1}]}=\alpha   \}\wedge 1,
\end{align*}
with the convention that $\inf\emptyset=+\infty$. Associated with it is
\[N_\alpha(\boldsymbol{X}):=\sup\{a\in\mathbb{N}\cup \{0\}: \tau_{a}< 1 \}.  \]
The following bound is always true
\begin{equation}
	\label{Boung on N}
	\alpha N_\alpha(\boldsymbol{X})\leq \norm{\boldsymbol{X}}^p_{p-var;[0,1]}.
\end{equation}
Moreover, we have the following relation (see proposition 4.11 of \cite{MR3112937})
\[ M_{\alpha}( \boldsymbol{X})\leq \alpha(2N_\alpha(\boldsymbol{X})+1).  \]

\section{Rough lift of $X_t$ }
\subsection{Piece-wise linear approximation}
In this section we study the rough lift of process $X_t\in\mathbb{R}^d$, whose components are independent copies of $I_n(f_t)$. Let $\{Q^l\}_{l\geq 1}$ be a sequence of increasing partition of $[0,1]$, whose mesh goes to zero as $n$ tends to infinity. We prepare a general lemma before our main results.
\begin{lemma}[Proposition 5.60 of \cite{friz2010multidimensional}]
	\label{Piece-wise p-var}
	Let $p>1$ and $F\in C([0,1]^2)$ with finite 2D $p$-variation. Then for any partitions $Q,\tilde{Q}$ of $[0,1]$ and $0\leq s,t,u,v\leq 1$, we have
	 \[ \norm{F^{Q,\tilde{Q}}}_{p-var;[s,t]\times [u,v]}\preceq \norm{F}_{p-var;[s,t]\times [u,v]},   \]
	 where $F^{Q,\tilde{Q}}$ is the piece-wise linear type approximation of $F$ along $Q,\tilde{Q}$.
\end{lemma}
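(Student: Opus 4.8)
The plan is to reduce the two–dimensional estimate to a one–variable one and then iterate in each coordinate. For $F\in C([0,1]^2)$ and a rectangle write $F\binom{a,b}{c,d}:=F(b,d)-F(b,c)-F(a,d)+F(a,c)$ for the rectangular increment, so that $\norm{F}^p_{p-var;[s,t]\times[u,v]}$ is the supremum of $\sum_{i,j}\abs{F\binom{a_i,a_{i+1}}{c_j,c_{j+1}}}^p$ over dissections $(a_i)$ of $[s,t]$ and $(c_j)$ of $[u,v]$. I would first observe that bilinear interpolation on the grid $Q\times\tilde Q$ factors as the two one–variable interpolations: if $G$ denotes the function obtained from $F$ by linearly interpolating $F(\cdot,y)$ along $Q$ for each fixed $y$ (leaving the second variable alone), then $F^{Q,\tilde Q}=(G)^{\tilde Q}$, where $(\cdot)^{\tilde Q}$ now interpolates the second variable. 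Hence it suffices to show that passing from $F$ to $G$ multiplies the 2D $p$-variation by at most a constant depending only on $p$, and to apply this twice.

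Next I would fix a rectangle $[s,t]\times[u,v]$ and dissections $(a_i)_{i=0}^I$, $(c_j)_{j=0}^J$ and write each rectangular increment of $G$ in terms of rectangular increments of $F$. Let $[q_{k_i},q_{k_i+1}]$ and $[q_{m_i},q_{m_i+1}]$ be the $Q$-cells containing $a_i$ and $a_{i+1}$ (with a fixed tie-breaking rule for grid points). Since $G$ is affine in the first variable on each $Q$-cell, an elementary computation gives, when $k_i=m_i$,
\[
G\binom{a_i,a_{i+1}}{c_j,c_{j+1}}=\lambda_i\,F\binom{q_{k_i},q_{k_i+1}}{c_j,c_{j+1}},\qquad \lambda_i=\frac{a_{i+1}-a_i}{q_{k_i+1}-q_{k_i}}\in(0,1],
\]
and, when $k_i<m_i$,
\begin{align*}
G\binom{a_i,a_{i+1}}{c_j,c_{j+1}}={}&F\binom{q_{k_i},q_{m_i+1}}{c_j,c_{j+1}}\\
&-(1-\nu_i)F\binom{q_{k_i},q_{k_i+1}}{c_j,c_{j+1}}-(1-\mu_i)F\binom{q_{m_i},q_{m_i+1}}{c_j,c_{j+1}}
\end{align*}
for suitable $\nu_i,\mu_i\in[0,1]$. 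So every increment of $G$ is a short combination of increments of $F$ over cells of the form (grid cell)$\times[c_j,c_{j+1}]$ or (grid block)$\times[c_j,c_{j+1}]$.

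It then remains to sum $p$-th powers, and here the two-dimensional structure must be used honestly — one cannot simply estimate column by column and add over $j$, because $p$-variation is not monotone under refinement. I would split the indices into $A=\{i:k_i=m_i\}$ and $B=\{i:k_i<m_i\}$. For $i\in A$, the subintervals $[a_i,a_{i+1}]$ lying in a fixed $Q$-cell are disjoint, so the corresponding $\lambda_i$ sum to at most $1$; since $p>1$ (so $\theta^p\le\theta$ on $[0,1]$) the same holds for $\sum\lambda_i^p$, and summing over cells and over $j$ bounds the $A$-part by $\sum_{k,j}\abs{F\binom{q_k,q_{k+1}}{c_j,c_{j+1}}}^p\le\norm{F}^p_{p-var;R^{\sharp}}$, where $R^{\sharp}\supseteq[s,t]\times[u,v]$ is the rectangle obtained by extending the sides to the nearest grid lines. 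For $i\in B$, by convexity of $\theta\mapsto\abs{\theta}^p$ the increment's $p$-th power is at most $3^{p-1}$ times the sum of the three pieces; the cells $[q_{k_i},q_{k_i+1}]$ (and likewise $[q_{m_i},q_{m_i+1}]$) are pairwise disjoint as $i$ ranges over $B$, so each of those two pieces again sums to $\le\norm{F}^p_{p-var;R^{\sharp}}$; and the long blocks $[q_{k_i},q_{m_i+1}]$, $i\in B$, meet — between consecutive members of $B$ — in a single $Q$-cell only, so splitting $B$ into its even- and odd-ranked members makes these blocks pairwise disjoint, whence the associated rectangles form admissible (partial) 2D dissections of $R^{\sharp}$ and contribute $\le 2\norm{F}^p_{p-var;R^{\sharp}}$. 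Collecting terms gives $\sum_{i,j}\abs{G\binom{a_i,a_{i+1}}{c_j,c_{j+1}}}^p\le(1+4\cdot3^{p-1})\norm{F}^p_{p-var;R^{\sharp}}$; taking the supremum over dissections and running the same argument in the second variable yields the claim, with a constant of the form $(1+4\cdot3^{p-1})^{2/p}$. The enlargement of $[s,t]\times[u,v]$ to $R^{\sharp}$ is a boundary artifact: it is vacuous when $s,t,u,v\in Q\cup\tilde Q$, and in the applications, where $Q,\tilde Q$ are partitions of mesh tending to zero, it is harmless.

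I expect the only genuinely delicate point to be the combinatorial bookkeeping in the last step: checking that consecutive long blocks really overlap in at most one $Q$-cell (so that the even/odd splitting produces disjoint families and the union of those rectangles with the $[c_j,c_{j+1}]$'s embeds into a grid dissection of $R^{\sharp}$), and that all degenerate configurations — dissection points coinciding with grid points or with the corners of the rectangle — are absorbed by the tie-breaking convention. Everything else is the identities for $G$ above together with convexity of $\abs{\cdot}^p$ and the inequality $\theta^p\le\theta$ for $\theta\in[0,1]$.
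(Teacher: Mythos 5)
The paper offers no proof of this lemma --- it is imported verbatim from Friz--Victoir (Proposition 5.60) --- so there is no internal argument to compare against. Your proof is, in substance, the standard one: factor the bilinear interpolation into the two one-variable interpolations, write each rectangular increment of the interpolant as a short combination of grid-aligned increments of $F$ (one ``long block'' plus at most two boundary cells, or a single rescaled cell), and sum $p$-th powers using $\theta^p\le\theta$ on $[0,1]$, convexity of $\abs{\cdot}^p$, and the even/odd splitting to keep the long blocks disjoint. I checked the bookkeeping you were worried about: for $i<i'$ in $B$ one has $k_{i'}\ge m_i>k_i$, so the left cells (and likewise the right cells) are pairwise distinct, consecutive long blocks overlap in at most the single cell $[q_{m_i},q_{m_i+1}]$, and non-consecutive ones are disjoint; hence each of your two subfamilies embeds into a grid dissection of $R^\sharp$ and the constant $(1+4\cdot 3^{p-1})^{2/p}$ comes out as claimed. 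The one point I would promote from ``boundary artifact'' to a correction of the statement is the appearance of $R^\sharp$: the inequality with the \emph{same} rectangle on both sides is actually false for general $s,t,u,v$. Take $Q=\tilde Q=\{0,1\}$ and $F(x,y)=4\max(x-\tfrac12,0)\max(y-\tfrac12,0)$; then $F\equiv 0$ on $[0,\tfrac12]^2$, so $\norm{F}_{p-var;[0,1/2]^2}=0$, while $F^{Q,\tilde Q}(x,y)=xy$ has rectangular increment $\tfrac14$ over $[0,\tfrac12]^2$. So your bound in terms of $\norm{F}_{p-var;R^\sharp}$ is the correct form, and the lemma as displayed holds verbatim only when $s,t\in Q$ and $u,v\in\tilde Q$ (which is how the source states it); where it is invoked for arbitrary $[s,t]^2$ in Theorem \ref{Piece-wise linear rough lift}, one is implicitly using the $R^\sharp$ version together with the vanishing mesh of $Q^l$. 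Your proof itself is fine.
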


Our next result gives a natural rough lift of $X_t$.
\begin{theorem}
	\label{Piece-wise linear rough lift}
	Suppose that assumption \ref{Assumption 1} holds for the components of $X_t$. Let $X^l_t$ be the piece-wise linear approximation of $X_t$ along $\{Q^l\}_{l\geq 1}$ and $\boldsymbol{X}^l$ be the natural lift of $X^l_t$, then for any $\rho'\in (\rho, 3/2)$ and $p>1$, we have 
	\begin{align*}
		\lim_{l,m\rightarrow \infty}\mathbb{E}\left(d_{2\rho'-var; [0,1]}(\boldsymbol{X}^l, \boldsymbol{X}^m )   \right)^p=0.
	\end{align*}
	In particular, $X_t$ admits a natural geometric rough lift $\boldsymbol{X}_t$ and 
	\begin{equation*}
		\mathbb{E}\left( \norm{\boldsymbol{X}_t}_{2\rho'-var;[0,1]} \right)^p<+\infty.
	\end{equation*}
\end{theorem}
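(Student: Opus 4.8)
The plan is to run the Friz--Victoir argument for Gaussian rough paths, substituting the moment equivalence of Proposition~\ref{Moment equivalence} for Gaussian hypercontractivity and checking that every covariance-dependent estimate only involves the scalar function $R$. Concretely, I would fix an auxiliary $\rho''\in(\rho,\rho')$ and reduce the whole statement, via a Kolmogorov-type criterion for rough paths (as in \cite{friz2010multidimensional}), to producing for every large $q$ a null sequence $a_{l,m}\to 0$ (as $l,m\to\infty$) such that, uniformly over $0\le s\le t\le 1$,
\[
	\norm{X^l_{s,t}}_{L^q}\preceq\abs{t-s}^{\frac1{2\rho''}},\qquad \norm{\mathbb X^l_{s,t}}_{L^{q/2}}\preceq\abs{t-s}^{\frac1{\rho''}},
\]
together with the same two bounds for the differences $X^l_{s,t}-X^m_{s,t}$ and $\mathbb X^l_{s,t}-\mathbb X^m_{s,t}$ carrying an extra factor $a_{l,m}$ on the right-hand side. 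Since $p$ is arbitrary the ``$q$ large'' hypothesis of the criterion costs nothing, and convergence in $d_{2\rho''-var}$ implies convergence in $d_{2\rho'-var}$ on $[0,1]$; moreover, applying the criterion to each $\boldsymbol X^l$ separately with the uniform bounds gives $\sup_l\mathbb E\norm{\boldsymbol X^l}_{2\rho''-var;[0,1]}^p<\infty$.

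For the first level, each component of $X^l_t$ lies in the fixed chaos $\mathcal H_n$, and $\mathbb E\abs{X^{l,i}_{s,t}}^2$ is a rectangular increment of the piece-wise bilinear approximation $R^l=R^{Q^l,Q^l}$ of $R$; by Lemma~\ref{Piece-wise p-var} this is at most $\norm{R^l}_{\rho-var;[s,t]^2}\preceq\norm{R}_{\rho-var;[s,t]^2}\le\abs{t-s}^{1/\rho}\le\abs{t-s}^{1/\rho''}$, and Proposition~\ref{Moment equivalence} upgrades it to all $L^q$. For the difference I would write $\mathbb E\abs{X^{l,i}_{s,t}-X^{m,i}_{s,t}}^2$ as a rectangular increment of a difference of piece-wise bilinear approximations of $R$ along $Q^l$ and $Q^m$; by Lemma~\ref{Piece-wise p-var} this function is uniformly bounded in $\rho$-variation, and it tends to $0$ in sup-norm since $R$ is continuous on the compact $[0,1]^2$ and the meshes vanish. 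The elementary $2$D interpolation $\norm{F}_{\rho''-var}\preceq\norm{F}_{\rho-var}^{\rho/\rho''}\norm{F}_\infty^{1-\rho/\rho''}$ then yields $\mathbb E\abs{X^{l,i}_{s,t}-X^{m,i}_{s,t}}^2\preceq\eta_{l,m}\abs{t-s}^{1/\rho''}$ with $\eta_{l,m}\to 0$, and Proposition~\ref{Moment equivalence} again gives the $L^q$ version with $a_{l,m}=\sqrt{\eta_{l,m}}$.

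For the second level, when $i\neq j$ the independence of the components together with Fubini for the Riemann--Stieltjes integrals along the piece-wise linear paths gives $\mathbb E\abs{\mathbb X^{l,ij}_{s,t}}^2=\int_{[s,t]^2}\phi^l\,dR^l$, where $\phi^l(r,r')=\mathbb E[X^{l,i}_{s,r}X^{l,i}_{s,r'}]$ vanishes on the edges through the corner $(s,s)$ and satisfies $\norm{\phi^l}_{\rho-var;[s,t]^2}\preceq\norm{R^l}_{\rho-var;[s,t]^2}$; this is a $2$D Young integral and, since $\rho<2$ makes the Young condition $2/\rho>1$ hold, it is bounded by $C\norm{R^l}_{\rho-var;[s,t]^2}^2\preceq\abs{t-s}^{2/\rho}\le\abs{t-s}^{2/\rho''}$. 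The difference $\mathbb X^{l,ij}_{s,t}-\mathbb X^{m,ij}_{s,t}$ is handled by the same Young estimate applied to the corresponding mixed kernels, followed by the interpolation of the previous paragraph. When $i=j$, the first order calculus identity \eqref{First order calculus} gives $\mathbb X^{l,ii}_{s,t}=\tfrac12(X^{l,i}_{s,t})^2$, so $\norm{\mathbb X^{l,ii}_{s,t}}_{L^{q/2}}=\tfrac12\norm{X^{l,i}_{s,t}}_{L^q}^2$ and the difference factors as $\tfrac12(X^{l,i}_{s,t}-X^{m,i}_{s,t})(X^{l,i}_{s,t}+X^{m,i}_{s,t})$, reducing both to the first-level bounds by Cauchy--Schwarz; all random variables in play lie in chaoses of order at most $2n$, so Proposition~\ref{Moment equivalence} is always applicable.

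Finally, the Kolmogorov criterion yields $\mathbb E\,d_{2\rho''-var;[0,1]}(\boldsymbol X^l,\boldsymbol X^m)^p\preceq a_{l,m}^p\to 0$, hence the stated limit. Completeness of the rough path metric produces $\boldsymbol X$, which is geometric---indeed the natural lift---being the $2\rho'$-variation limit of the $S_2$-lifts of the piece-wise linear paths $X^l$; the moment bound follows from $\sup_l\mathbb E\norm{\boldsymbol X^l}_{2\rho''-var;[0,1]}^p<\infty$ and lower semicontinuity of $p$-variation along an almost surely convergent subsequence. The main obstacle is the $2$D Young bound for the second-level integrals and, above all, the interpolation step converting the merely uniform $\rho$-variation control of the kernel differences into a genuinely decaying Cauchy estimate; by contrast no new analytic input beyond the Gaussian case is required, since $\mathbb E\abs{\mathbb X^{l,ij}_{s,t}}^2$ still only sees the scalar covariance $R$ and the passage from $L^2$ to $L^q$ is exactly Proposition~\ref{Moment equivalence}.
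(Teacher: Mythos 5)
Your proposal is correct and follows essentially the same route as the paper: reduction via the Friz--Victoir moment criterion (Theorem A.13), moment equivalence in place of Gaussian hypercontractivity, Lemma~\ref{Piece-wise p-var} plus 2D interpolation for the Cauchy estimates, 2D Young integration with independence for the off-diagonal second-level terms, and the calculus identity $\mathbb X^{l;i,i}_{s,t}=\tfrac12(X^{l;i}_{s,t})^2$ for the diagonal ones. The only cosmetic difference is your insertion of an auxiliary exponent $\rho''\in(\rho,\rho')$, which the paper does not need.
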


	\begin{proof}
		By theorem A.13 of \cite{friz2010multidimensional}, it is enough to show that for any $s,t\in[0,1]$, $1\leq i,j \leq d$ and $p>1$
		\begin{align}
			\sup_{l\geq 1}\mathbb{E}\abs{X^{l;i}_{s,t}}^p&\preceq  \abs{t-s}^{\frac{p}{2\rho'}} \label{level 1 bound}\\
			\sup_{l\geq 1}\mathbb{E}\abs{\int_{s}^{t}X^{l;i}_{s,r}dX^{l;j}_r}^p&\preceq   \abs{t-s}^{\frac{p}{\rho'}},\label{level 2 bound}\\
			\mathbb{E}\abs{X^{l;i}_{s,t}-X^{m;i}_{s,t}}^p&\preceq \epsilon \abs{t-s}^{\frac{p}{2\rho'}}\label{level 1} \\
			\mathbb{E}\abs{\int_{s}^{t}X^{l;i}_{s,r}dX^{l;j}_r-\int_{s}^{t}X^{m;i}_{s,r}dX^{m;j}_r}^p&\preceq  \epsilon \abs{t-s}^{\frac{p}{\rho'}},\label{level 2}
		\end{align}
		where $X^{l;i}$ is the $i$-th component of $X^l$, similarly for all other terms, and $\epsilon$ is some positive number that goes to zero as $l,m$ tend to infinity. We will only prove \eqref{level 1} and \eqref{level 2}, since \eqref{level 1 bound} and \eqref{level 2 bound} can be derived with exact same (in fact easier) arguments.
		
		 For level 1 and any $1\leq i\leq d$, we have by moment equivalence
		\begin{align*}
			\mathbb{E}\abs{X^{l;i}_{s,t}-X^{m;i}_{s,t}}^p&\preceq \left(\mathbb{E}\abs{X^{l;i}_{s,t}-X^{m;i}_{s,t}}^2 \right)^{\frac{p}{2}}\preceq \norm{R_{X^l-X^m}}_{\rho'-var; [s,t]^2}^{\frac{p}{2}},
		\end{align*}
	where $R_{X^l-X^m}$ is the covariance function of $X^l-X^m$, which can be written in terms of piece-wise linear approximation of $R_X$ along $Q^l,Q^m$. Since $\rho'>\rho$, we have by interpolation and lemma \ref{Piece-wise p-var} that
	\begin{align}
		\norm{R_{X^l-X^m}}_{\rho'-var; [s,t]^2}^{\frac{p}{2}}&\preceq \norm{R_{X^l-X^m}}^{\frac{\rho'-\rho}{\rho'}\cdot \frac{p}{2}}_{\infty;[0,1]^2}\norm{R_{X^l-X^m}}^{\frac{\rho}{\rho'}\cdot \frac{p}{2}}_{\rho-var; [s,t]^2}\nonumber\\
		& \preceq  \norm{R_{X^l-X^m}}^{\frac{\rho'-\rho}{\rho'}\cdot \frac{p}{2}}_{\infty;[0,1]^2}\norm{R_{X}}^{\frac{\rho}{\rho'}\cdot \frac{p}{2}}_{\rho-var; [s,t]^2}\nonumber \\
		&\preceq  \norm{R_{X^l-X^m}}^{\frac{\rho'-\rho}{\rho'}\cdot \frac{p}{2}}_{\infty;[0,1]^2}\abs{t-s}^{\frac{p}{2\rho'}}.\label{level 1 final}
	\end{align}
	We used assumption \ref{Assumption 1} in the last inequality.
	
		For level 2, we may first consider
		\begin{equation*}
			\mathbb{E}\left(\int_{s}^{t}X^{l;i}_{s,r}dX^{l;j}_r-\int_{s}^{t}X^{m;i}_{s,r}dX^{m;j}_r\right)^2.
		\end{equation*}
		When $i=j$, by simple calculus, we have 
		\[\int_{s}^{t}X^{l;i}_{s,r}dX^{l;i}_r=\frac{1}{2}(X^{l;i}_t-X^{l;i}_s)^2=\frac{1}{2}(X^{l;i}_{s,t})^2.   \]
		This together with moment equivalence give
		\begin{align}
			\mathbb{E}\left(\int_{s}^{t}X^{l;i}_{s,r}dX^{l;j}_r-\int_{s}^{t}X^{m;i}_{s,r}dX^{m;j}_r\right)^2&=\frac{1}{4}\mathbb{E}\left( (X^{l;i}_{s,t}-X^{m;i}_{s,t} )(X^{l;i}_{s,t}+X^{m;i}_{s,t} )  \right)^2\nonumber\\
			&\preceq  \mathbb{E}(X^{l;i}_{s,t}-X^{m;i}_{s,t} )^2\mathbb{E}(X^{l;i}_{s,t}+X^{m;i}_{s,t} )^2 \nonumber\\
			&\preceq \norm{R_{X^l-X^m}}_{\rho'-var; [s,t]^2} \norm{R_X}_{\rho-var;[s,t]\times [s,t]}\nonumber\\
			&\preceq \norm{R_{X^l-X^m}}^{\frac{\rho'-\rho}{\rho'}}_{\infty;[0,1]^2} \abs{t-s}^{\frac{2}{\rho'}}. \label{level 2 piece 1}
		\end{align}
		When $i\neq j$, $X^i$ and $X^j$ are independent. Hence, we can write
		\begin{align}
			&\mathbb{E}\left(\int_{s}^{t}X^{l;i}_{s,r}dX^{l;j}_r-\int_{s}^{t}X^{m;i}_{s,r}dX^{m;j}_r\right)^2\nonumber\\
			=&\int_{[s,t]\times [s,t]}\mathbb{E}(X^{l;i}_{s,r}-X^{m;i}_{s,r})(X^{l;i}_{s,u}-X^{m;i}_{s,u})d\mathbb{E}(X^{l;j}_rX^{l;j}_u)\nonumber\\
			+&\int_{[s,t]\times [s,t]}\mathbb{E}(X^{m;i}_{s,r}X^{m;i}_{s,u})d\mathbb{E}(X^{l;i}_{s,r}-X^{m;i}_{s,r})(X^{l;i}_{s,u}-X^{m;i}_{s,u})\nonumber\\
			\preceq&  \norm{R_{X^l-X^m}}_{\rho'-var; [s,t]^2} \norm{R}_{\rho-var;[s,t]\times [s,t]}\preceq \norm{R_{X^l-X^m}}^{\frac{\rho'-\rho}{\rho'}}_{\infty;[0,1]^2} \abs{t-s}^{\frac{2}{\rho'}}.\label{level 2 piece 2}
		\end{align}
		Combining \eqref{level 2 piece 1} and \eqref{level 2 piece 2} and another application of moment equivalence gives
		\begin{equation}
			\mathbb{E}\abs{\int_{s}^{t}X^{l;i}_{s,r}dX^{l;j}_r-\int_{s}^{t}X^{m;i}_{s,r}dX^{m;j}_r}^p\preceq  \norm{R_{X^l-X^m}}^{\frac{\rho'-\rho}{\rho'}\cdot \frac{p}{2}}_{\infty;[0,1]^2} \abs{t-s}^{\frac{p}{\rho'}}.\label{level 2 final}
		\end{equation}		
		Putting \eqref{level 1 final}, \eqref{level 2 final} together, we have proved \eqref{level 1} and \eqref{level 2} with 
		\[ \epsilon=\norm{R_{X^l-X^m}}^{\frac{\rho'-\rho}{\rho'}\cdot \frac{p}{2}}_{\infty;[0,1]^2}. \]
		Since
		\[\lim_{l,m\rightarrow \infty}\norm{R_{X^l-X^m}}_{\infty;[0,1]^2}= 0, \]
		we conclude
		\begin{align*}
			\lim_{l,m\rightarrow \infty}\mathbb{E}\left(d_{2\rho'-var; [0,1]}(\boldsymbol{X}^l, \boldsymbol{X}^m )   \right)^p=0.
		\end{align*}
\end{proof}

\subsection{Karhunen-Lo\`eve type approximation}

Similar to Gaussian processes, we may introduce the following Karhunen-Lo\`eve type approximation to $X_t$. 

Let $\{\varphi_i \}_{i\geq 1}$ be an orthonormal basis of $\mathcal{H}^{\hat{\otimes} n}$, then it is readily checked that 
\begin{equation}
	\label{Karhunen-Loeve}
	I_n(f_t)=\sum_{i\geq 1}\langle f_t, \varphi_i \rangle I_n(\varphi_i),
\end{equation}
where the above series is convergent in $L^2(\Omega)$. We show that \eqref{Karhunen-Loeve} also gives a rough lift of $X_t$, which coincide with the one we constructed in theorem \ref{Piece-wise linear rough lift} almost surely. 

Our strategy is apparent, we show the partial sum $X_t^k$ whose components are independent copies of $\sum_{i= 1}^{k}\langle f_t, \varphi_i \rangle I_n(\varphi_i)$ has a natural rough lift $\boldsymbol{X}^k_t$ which converges to $\boldsymbol{X}_t$ as $k$ tends to infinity. We start with a lemma, reminiscent to the Cameron-Martin embedding of Gaussian measure.
\begin{lemma}
	\label{Embedding}
	Suppose assumption \ref{Assumption 1} holds. For any $\varphi \in \mathcal{H}^{\hat{\otimes} n}$, the process $\phi_t=\langle f_t, \varphi\rangle_{\mathcal{H}^{\otimes n}}$ has finite $\rho$-variation and
	\[ \norm{\phi_t}_{\rho-var;[0,1]}\preceq \norm{\varphi}_{\mathcal{H}} \sqrt{\norm{R}_{\rho-var;[0,1]}}.   \]	
\end{lemma}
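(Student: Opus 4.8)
The plan is to recognise $\phi_t=\langle f_t,\varphi\rangle_{\mathcal{H}^{\otimes n}}$ as a Cameron--Martin type path and to transcribe the classical Cameron--Martin embedding argument for Gaussian rough paths (cf.\ \cite{friz2010multidimensional}); the point is that Gaussianity plays no role, only the $L^2$-isometry of Wiener chaos together with the finite two-dimensional $\rho$-variation of $R$ furnished by Assumption \ref{Assumption 1} (here $\norm{R}_{\rho-var;[0,1]}$ is read as the $2$D $\rho$-variation of $R$ over $[0,1]^2$).

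First I would record the right test random variable. Writing $X_t=I_n(f_t)$ for a scalar component of the process and setting $Z:=\tfrac{1}{n!}I_n(\varphi)\in L^2(\Omega)$, the isometry $\mathbb{E}\big(I_n(f_t)I_n(\varphi)\big)=n!\langle f_t,\varphi\rangle_{\mathcal{H}^{\otimes n}}$ gives
\[ \phi_t=\mathbb{E}(X_tZ),\qquad \norm{Z}_{L^2(\Omega)}=\tfrac{1}{\sqrt{n!}}\,\norm{\varphi}_{\mathcal{H}^{\otimes n}}, \]
and for any rectangles $[s,t],[u,v]\subset[0,1]$ the quantity $\mathbb{E}(X_{s,t}X_{u,v})$ is exactly the rectangular increment of $R$ over $[s,t]\times[u,v]$. (Continuity of $t\mapsto f_t$, hence of $\phi$, follows from $\norm{f_{s,t}}_{\mathcal{H}^{\otimes n}}^2=\tfrac{1}{n!}\mathbb{E}|X_{s,t}|^2\le\tfrac{1}{n!}|t-s|^{1/\rho}$, though only finiteness of the $\rho$-variation is needed for the statement.)

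Next I would fix a partition $D=\{0=t_0<\cdots<t_N=1\}$ and bound $\sum_i|\phi_{t_i,t_{i+1}}|^\rho$ by duality in $\ell^\rho$: choosing coefficients $(b_i)$ with $\sum_i|b_i|^{\rho'}\le1$ ($\rho'$ the conjugate exponent, $\rho'=\infty$ when $\rho=1$) that realise the $\ell^\rho$-norm, linearity and Cauchy--Schwarz in $L^2(\Omega)$ give
\begin{align*}
	\Big(\sum_i|\phi_{t_i,t_{i+1}}|^{\rho}\Big)^{1/\rho}
	&=\mathbb{E}\Big(Z\sum_i b_iX_{t_i,t_{i+1}}\Big)\\
	&\le\norm{Z}_{L^2(\Omega)}\,\Big\|\sum_i b_iX_{t_i,t_{i+1}}\Big\|_{L^2(\Omega)}.
\end{align*}
Expanding the last factor as $\sum_{i,j}b_ib_j\,\mathbb{E}(X_{t_i,t_{i+1}}X_{t_j,t_{j+1}})$ and applying Hölder over the pairs $(i,j)$ with exponents $\rho',\rho$, I would use $\sum_{i,j}|b_ib_j|^{\rho'}=\big(\sum_i|b_i|^{\rho'}\big)^2\le1$ together with the fact that $D\times D$ is an admissible grid in the definition of the $2$D $\rho$-variation of $R$ to conclude $\big\|\sum_i b_iX_{t_i,t_{i+1}}\big\|_{L^2(\Omega)}^2\le\norm{R}_{\rho-var;[0,1]^2}$. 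Taking the supremum over $D$ then yields $\norm{\phi}_{\rho-var;[0,1]}\le\norm{Z}_{L^2(\Omega)}\sqrt{\norm{R}_{\rho-var;[0,1]^2}}$, i.e.\ the claimed bound with implied constant $(n!)^{-1/2}\le1$.

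I do not anticipate a genuine obstacle, since this is essentially a transcription of the Gaussian Cameron--Martin embedding; the only points requiring care are (i) reading $\norm{Z}_{L^2(\Omega)}$ off the chaos isometry, and (ii) the bookkeeping in the duality/Hölder step. Concerning (ii), it is worth stressing why duality is needed: the naive bound $|\phi_{t_i,t_{i+1}}|\le\norm{\varphi}_{\mathcal{H}^{\otimes n}}\norm{f_{t_i,t_{i+1}}}_{\mathcal{H}^{\otimes n}}$ applied termwise discards all off-diagonal covariances and produces only an estimate of order $\sum_i|t_{i+1}-t_i|^{1/2}$, which diverges along fine partitions; the duality argument is precisely what restores the cancellations encoded in the $2$D $\rho$-variation of $R$.
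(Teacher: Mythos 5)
Your proof is correct and is essentially the paper's argument: both proceed by $\ell^\rho$--$\ell^{\rho'}$ duality over a partition, a Cauchy--Schwarz step, and then H\"older over the pairs $(i,j)$ against the $2$D $\rho$-variation of $R$. The only (cosmetic) difference is that you perform Cauchy--Schwarz in $L^2(\Omega)$ via the test variable $Z=\tfrac{1}{n!}I_n(\varphi)$ and the chaos isometry, whereas the paper applies it directly in $\mathcal{H}^{\otimes n}$ to $\big\langle \sum_i\beta_i f_{t_i,t_{i+1}},\varphi\big\rangle$; these are the same inequality up to factors of $n!$.
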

\begin{proof}
	Recall we have the following isometry
	\[ R([s,t]\times [u,v])=\mathbb{E}(X_{s,t}X_{u,v})=n!\langle f_{s,t}, f_{u,v} \rangle_{\mathcal{H}^{\otimes n}}.  \]
	Let $\rho'$ be the H\"older conjugate of $\rho$. Then, for any partition of $[0,1]$, we have by duality that
	\begin{align*}
		&\left(\sum_{i}\abs{\langle f_{t_i, t_{i+1}}, \varphi\rangle}^{\rho} \right)^{\frac{1}{\rho}}=\sup_{\norm{\beta}_{l^{\rho'}}\leq 1}\sum_{i}\beta_i \langle f_{t_i, t_{i+1}}, \varphi\rangle_{\mathcal{H}^{\otimes n}}
		=\sup_{\norm{\beta}_{l^{\rho'}}\leq 1}\langle \sum_{i}\beta_if_{t_i, t_{i+1}}, \varphi\rangle_{\mathcal{H}^{\otimes n}}\\
		&\leq \sup_{\norm{\beta}_{l^{\rho'}}\leq 1}\norm{\varphi}_{\mathcal{H}}\sqrt{\sum_{i}\sum_{j}\beta_i\beta_j\langle f_{t_i,t_{i+1}}, f_{t_j,t_{j+1}}\rangle_{\mathcal{H}^{\otimes n}} }\\
		&\leq \sup_{\norm{\beta}_{l^{\rho'}}\leq 1} \norm{\varphi}_{\mathcal{H}} \left( \sum_{i}\sum_{j}\abs{\beta_i}^{\rho'}\abs{\beta_j}^{\rho'} \right)^{\frac{1}{2\rho'}} \left(  \sum_{i}\sum_{j}\abs{\langle f_{t_i,t_{i+1}}, f_{t_j,t_{j+1}}\rangle_{\mathcal{H}^{\otimes n}}}^\rho  \right)^{\frac{1}{2\rho}}\\
		&\preceq\norm{\varphi}_{\mathcal{H}} \left(  \sum_{i}\sum_{j}\abs{R([t_i,t_{i+1}]\times [t_j,t_{j+1}]) }^\rho  \right)^{\frac{1}{2\rho}}\leq \norm{\varphi}_{\mathcal{H}} \sqrt{\norm{R}_{\rho-var;[0,1]^2}}.
	\end{align*}
	Taking the supreme over all partitions concludes the proof.	
\end{proof}

\begin{lemma}
	\label{K-L approximation}
	Suppose assumption \ref{Assumption 1} holds for the components of $X_t$. Define $X_t^k$ to be the process whose components are independent copies of
	\[\sum_{i= 1}^{k}\langle f_t, \varphi_i \rangle I_n(\varphi_i).  \]
	Then, for each $k\geq 1$, $X^k_t$ admits a geometric rough lift $\boldsymbol{X}^k$.
\end{lemma}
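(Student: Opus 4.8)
The plan is to fix $\omega$ and notice that, pathwise, $X^k_t$ is the image under a random linear map of a single deterministic path of finite $\rho$-variation; the assertion then reduces to the classical fact that a continuous path of finite $p$-variation with $p<2$ carries a canonical geometric rough lift, together with the invariance of the geometric property under bounded linear maps.

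First I would introduce the deterministic $\mathbb{R}^k$-valued path
\[ \Phi_t:=\big(\langle f_t,\varphi_1\rangle_{\mathcal{H}^{\otimes n}},\dots,\langle f_t,\varphi_k\rangle_{\mathcal{H}^{\otimes n}}\big).\]
By Lemma~\ref{Embedding} every coordinate $\phi_{i,t}=\langle f_t,\varphi_i\rangle$ has finite $\rho$-variation on $[0,1]$, and it is continuous because, under Assumption~\ref{Assumption 1},
\[\norm{f_{s,t}}^2_{\mathcal{H}^{\otimes n}}=\tfrac1{n!}\,\mathbb{E}\abs{X_{s,t}}^2=\tfrac1{n!}\,R([s,t]\times[s,t])\leq\tfrac1{n!}\abs{t-s}^{1/\rho},\]
which tends to $0$ as $t\to s$. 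Hence $\Phi\in C([0,1],\mathbb{R}^k)$ has finite $\rho$-variation with $\rho\in[1,3/2)$. Since $\rho<2$, Young integration furnishes a canonical rough lift $\boldsymbol{\Phi}=(\Phi,\Phi^{(2)})$, with $\Phi^{(2)}_{s,t}=\int_s^t\Phi_{s,r}\otimes d\Phi_r$ well defined as a Young integral, satisfying Chen's relation and the bound $\norm{\Phi^{(2)}}_{\rho/2-var;[0,1]}\preceq\norm{\Phi}^2_{\rho-var;[0,1]}<\infty$; as $\rho/2<\rho'$ this gives $\boldsymbol{\Phi}\in\mathscr{C}^{2\rho'-var}([0,1],\mathbb{R}^k)$ for every $\rho'\in(\rho,3/2)$. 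Moreover $\boldsymbol{\Phi}$ is geometric: the piecewise-linear interpolations $\Phi^l$ of $\Phi$ along $Q^l$ converge to $\Phi$ uniformly and, by the one-dimensional version of Lemma~\ref{Piece-wise p-var}, satisfy $\sup_l\norm{\Phi^l}_{\rho-var;[0,1]}\preceq\norm{\Phi}_{\rho-var;[0,1]}$; interpolation in $p$-variation then yields $\Phi^l\to\Phi$ in $\rho'$-variation, and continuity of Young integration gives $\norm{\boldsymbol{\Phi}-S_2(\Phi^l)}_{2\rho'-var;[0,1]}\to0$, which is precisely \eqref{Geometric rough path}. All of this is classical; see \cite{friz2010multidimensional}.

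To conclude, observe that for fixed $\omega$ the $m$-th component of $X^k_t(\omega)$ is $\sum_{i=1}^k\phi_{i,t}\,I_n^{(m)}(\varphi_i)(\omega)$, so $X^k_t(\omega)=\mathcal{L}(\omega)\Phi_t$, where $\mathcal{L}(\omega)\colon\mathbb{R}^k\to\mathbb{R}^d$ is the (a.s.\ finite) linear map with matrix $\big(I_n^{(m)}(\varphi_i)(\omega)\big)_{1\leq m\leq d,\,1\leq i\leq k}$. Define $\boldsymbol{X}^k(\omega)$ to be the pushforward $\mathcal{L}(\omega)_*\boldsymbol{\Phi}$, i.e.\ the rough path with first level $\mathcal{L}(\omega)\Phi_t=X^k_t(\omega)$ and second level $\mathcal{L}(\omega)^{\otimes2}\Phi^{(2)}_{s,t}$. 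Since $\mathcal{L}(\omega)_*$ is continuous from $\mathscr{C}^{2\rho'-var}([0,1],\mathbb{R}^k)$ to $\mathscr{C}^{2\rho'-var}([0,1],\mathbb{R}^d)$ and commutes with $S_2$, i.e.\ $\mathcal{L}(\omega)_*S_2(\Phi^l)=S_2(\mathcal{L}(\omega)\Phi^l)$, applying it to $S_2(\Phi^l)\to\boldsymbol{\Phi}$ gives $\norm{\boldsymbol{X}^k(\omega)-S_2(\mathcal{L}(\omega)\Phi^l)}_{2\rho'-var;[0,1]}\to0$, so $\boldsymbol{X}^k(\omega)$ is a geometric rough lift of $X^k_\cdot(\omega)$; measurability of $\omega\mapsto\boldsymbol{X}^k(\omega)$ is immediate from these formulas.

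I do not anticipate a real obstacle. The one delicate point is verifying that the canonical Young lift of $\Phi$ genuinely satisfies \eqref{Geometric rough path}, which is exactly where the hypothesis $\rho<3/2<2$ enters, through the continuity of Young integration in the $p$-variation topology; everything else—finite $\rho$-variation and continuity of $\Phi$, stability of the geometric property under the bounded linear map $\mathcal{L}(\omega)$, and joint measurability in $\omega$—is routine.
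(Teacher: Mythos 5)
Your proof is correct, but it takes a genuinely different route from the paper's. The paper stays inside its probabilistic framework: it computes the covariance $R_{X^k}(s,t)=\sum_{i=1}^k\langle f_s,\varphi_i\rangle\langle f_t,\varphi_i\rangle$, bounds its 2D $\rho$-variation by $k\,\norm{R}_{\rho-var;[0,1]^2}$ via the triangle inequality and lemma \ref{Embedding}, and then invokes theorem \ref{Piece-wise linear rough lift} to get the lift as an $L^p$-limit of piecewise-linear lifts. You instead exploit the finite rank of the truncation pathwise: $X^k_\cdot(\omega)=\mathcal{L}(\omega)\Phi_\cdot$ with $\Phi$ a \emph{deterministic} $\mathbb{R}^k$-valued path of finite $\rho$-variation, $\rho<2$, so the canonical Young lift exists $\omega$-by-$\omega$ and pushes forward under the linear map. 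Your argument is more elementary (no moment equivalence, no Kolmogorov/GRR machinery) and yields strictly more: the lift of $X^k$ has finite $\rho'$-variation for any $\rho'>\rho$, not merely $2\rho'$-variation, and is defined for every $\omega$ with $\mathcal{L}(\omega)$ finite. The paper's route has the advantage of exhibiting $X^k$ as another chaos process to which all the Assumption \ref{Assumption 1} machinery (in particular the uniform $L^p$ bounds of theorem \ref{Piece-wise linear rough lift}) applies verbatim; note, though, that your moment bounds follow just as easily since the matrix entries $I_n(\varphi_i)$ lie in every $L^p$. The two constructions produce the same object almost surely, because piecewise-linear interpolation commutes with $\mathcal{L}(\omega)$ and the Young lift is the limit of the piecewise-linear lifts, so your lift is compatible with the ``natural lift'' used in the subsequent projection argument.
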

\begin{proof}
	For a fixed $k\geq 1$, we have
	\begin{align*}
		R_{X^k}(s,t)=\mathbb{E}( X^k_tX^k_s)=\sum_{i= 1}^{k}\langle f_s, \varphi_i \rangle\langle f_t, \varphi_i \rangle.
	\end{align*}
	By triangle inequality, one sees that
	\begin{align*}
		\norm{R_{X^k}}_{\rho-var;[0,1]^2}&\leq \sum_{i= 1}^{k}\norm{\langle f_s, \varphi_i \rangle\langle f_t, \varphi_i \rangle}_{\rho-var;[0,1]^2}\leq \sum_{i= 1}^{k}\norm{\langle f_t, \varphi_i \rangle}^2_{\rho-var;[0,1]}\\
		&\leq  k\norm{R}_{\rho-var;[0,1]^2},
	\end{align*}
	where we used the previous lemma in the last step. As a result, for each fixed $k\geq 1$, $X^k_t$ is a process living in the $n$-th homogeneous chaos whose covariance function $R_{X^k}$ satisfies assumption \ref{Assumption 1}. Now our result follows from theorem \ref{Piece-wise linear rough lift}. 
\end{proof}
	To achieve our goal of proving that $\boldsymbol{X}^k$ converges to $\boldsymbol{X}$, the main ingredient is to get, in a certain sense, a uniform boundedness of $\boldsymbol{X}^k$. The crude estimate we got from previous lemma using triangle inequality is obviously not uniform. A more delicate projection argument is needed. 
	
\begin{proposition}
	Suppose assumption \ref{Assumption 1} holds for the components of $X_t$. Let $\boldsymbol{X}^k$ be the natural lifts we constructed in the previous lemma and $\boldsymbol{X}$ the rough lift we constructed in theorem \ref{Piece-wise linear rough lift}.  Then, for any $\rho'>\rho$ and $p>1$, we have 
	\begin{align*}
		\lim_{k\rightarrow \infty}\mathbb{E}\left(d_{2\rho'-var; [0,1]}(\boldsymbol{X}^k, \boldsymbol{X} )   \right)^p=0.
	\end{align*}  
\end{proposition}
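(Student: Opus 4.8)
The plan is to follow the strategy of the proof of Theorem \ref{Piece-wise linear rough lift}. Write $P_k$ for the orthogonal projection of $\mathcal H^{\hat\otimes n}$ onto $\mathrm{span}\{\varphi_1,\dots,\varphi_k\}$, so that the components of $X^k_t$ are independent copies of $I_n(P_kf_t)$ and $R_{X^k}(s,t)=n!\langle P_kf_s,P_kf_t\rangle_{\mathcal H^{\otimes n}}$. By theorem A.13 of \cite{friz2010multidimensional} it suffices to produce, for all $s,t\in[0,1]$, $1\le i,j\le d$, $p>1$ and some $\rho'\in(\rho,3/2)$, estimates
\begin{align*}
	\sup_{k\ge1}\mathbb{E}\abs{X^{k;i}_{s,t}}^p&\preceq\abs{t-s}^{\frac{p}{2\rho'}},\\
	\sup_{k\ge1}\mathbb{E}\abs{\int_s^t X^{k;i}_{s,r}dX^{k;j}_r}^p&\preceq\abs{t-s}^{\frac{p}{\rho'}},\\
	\mathbb{E}\abs{X^{k;i}_{s,t}-X^{i}_{s,t}}^p&\preceq\epsilon_k\abs{t-s}^{\frac{p}{2\rho'}},\\
	\mathbb{E}\abs{\mathbb{X}^{k;ij}_{s,t}-\mathbb{X}^{ij}_{s,t}}^p&\preceq\epsilon_k\abs{t-s}^{\frac{p}{\rho'}},
\end{align*}
with $\epsilon_k\to0$. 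By the moment equivalence (Proposition \ref{Moment equivalence}), the geometricity identity $Symm(\mathbb{X}^k_{s,t})=\frac12X^k_{s,t}\otimes X^k_{s,t}$, the independence of the components, and a Fubini-type identity for the iterated integrals, passing to the limit in the piecewise-linear approximations exactly as in Theorem \ref{Piece-wise linear rough lift}, all four reduce to covariance-level statements about $R_{X^k}$ and $R_{X-X^k}=R_{X^k-X}$: a \emph{uniform} H\"older-controlled bound $\sup_{k\ge1}\norm{R_{X^k}}^{\rho}_{\rho-var;[s,t]^2}\preceq\abs{t-s}$ --- which, since $R_X=R_{X^k}+R_{X-X^k}$ by orthogonality of $P_k$ and $\mathrm{id}-P_k$, also forces $\sup_{k\ge1}\norm{R_{X-X^k}}^{\rho}_{\rho-var;[s,t]^2}\preceq\abs{t-s}$ --- together with $\norm{R_{X-X^k}}_{\infty;[0,1]^2}\to0$; interpolating between these two norms as in \eqref{level 1 final}--\eqref{level 2 final} produces the rate, e.g.\ $\epsilon_k=\norm{R_{X-X^k}}^{\frac{\rho'-\rho}{\rho'}\cdot\frac p2}_{\infty;[0,1]^2}$.

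The smallness is easy. By Assumption \ref{Assumption 1}, $\norm{f_{s,t}}_{\mathcal H^{\otimes n}}^2=\frac1{n!}R_X([s,t]^2)\le\frac1{n!}\abs{t-s}^{1/\rho}$, so $s\mapsto f_s$ is H\"older continuous and $K:=\{f_s:s\in[0,1]\}$ is compact in $\mathcal H^{\hat\otimes n}$; since $P_k\to\mathrm{id}$ strongly and strong convergence is uniform on compacts, $\delta_k:=\sup_{s\in[0,1]}\norm{(\mathrm{id}-P_k)f_s}_{\mathcal H^{\otimes n}}\to0$, and hence $\norm{R_{X-X^k}}_{\infty;[0,1]^2}=\sup_{s,t}n!\,\abs{\langle(\mathrm{id}-P_k)f_s,(\mathrm{id}-P_k)f_t\rangle}\le n!\,\delta_k^2\to0$.

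The heart of the matter is the uniform bound $\sup_{k\ge1}\norm{R_{X^k}}^{\rho}_{\rho-var;[s,t]^2}\preceq\abs{t-s}$, for which the crude estimate $\norm{R_{X^k}}_{\rho-var}\le k\norm{R_X}_{\rho-var}$ of Lemma \ref{K-L approximation} is useless. I would argue by reflection. Let $\mathcal R_k:=2P_k-\mathrm{id}$; this is a self-adjoint involution, hence a unitary operator on $\mathcal H^{\hat\otimes n}$, so the process $\widetilde X^k_t:=I_n(\mathcal R_kf_t)$ has covariance $\mathbb{E}(\widetilde X^k_s\widetilde X^k_t)=n!\langle\mathcal R_kf_s,\mathcal R_kf_t\rangle=n!\langle f_s,f_t\rangle=R_X(s,t)$, identical to that of $X$; in particular $R_{\widetilde X^k}$ satisfies Assumption \ref{Assumption 1} with the same control. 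Since $P_k=\frac12(\mathrm{id}+\mathcal R_k)$, on the level of random variables $X^k_t=\frac12(X_t+\widetilde X^k_t)$, and therefore
\[ R_{X^k}=\tfrac12R_X+\tfrac12C_k,\qquad C_k(s,t):=\mathbb{E}(X_s\widetilde X^k_t)=n!\langle f_s,\mathcal R_kf_t\rangle=C_k(t,s). \]
Thus $\norm{R_{X^k}}_{\rho-var;[s,t]^2}\le\frac12\norm{R_X}_{\rho-var;[s,t]^2}+\frac12\norm{C_k}_{\rho-var;[s,t]^2}$, and the whole problem is reduced to the single uniform estimate $\sup_{k\ge1}\norm{C_k}^{\rho}_{\rho-var;[s,t]^2}\preceq\abs{t-s}$ for the symmetric cross-covariance $C_k$ of $X$ and its reflection $\widetilde X^k$.

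This last estimate is the step I expect to be the real obstacle. Bounding $\abs{C_k([s_i,s_{i+1}]\times[s_j,s_{j+1}])}\le n!\norm{f_{s_i,s_{i+1}}}\,\norm{f_{s_j,s_{j+1}}}$ rectangle by rectangle is too lossy: it degrades to $\big(\sum_i\norm{f_{s_i,s_{i+1}}}^\rho\big)^2\preceq\big(\sum_i\abs{s_{i+1}-s_i}^{1/2}\big)^2$, which diverges as the partition refines (it controls only the $\rho/2$-variation of the diagonal). One must instead retain the correlations, using a duality argument in the spirit of Lemma \ref{Embedding} applied in \emph{both} variables simultaneously and exploiting that $\mathcal R_k$ is norm-preserving. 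Once $\sup_{k\ge1}\norm{C_k}^{\rho}_{\rho-var;[s,t]^2}\preceq\abs{t-s}$ is in hand, all four covariance statements --- and hence the four moment estimates above --- follow, and theorem A.13 of \cite{friz2010multidimensional} yields $\mathbb{E}\big(d_{2\rho'-var;[0,1]}(\boldsymbol X^k,\boldsymbol X)\big)^p\to0$ for all $\rho'>\rho$ and $p>1$. In particular, along a subsequence the Karhunen--Lo\`eve lifts $\boldsymbol X^k$ converge almost surely to the geometric rough path $\boldsymbol X$ of Theorem \ref{Piece-wise linear rough lift}.
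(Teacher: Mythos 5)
Your reduction to the single estimate $\sup_{k\ge1}\norm{C_k}^{\rho}_{\rho-var;[s,t]^2}\preceq\abs{t-s}$ is where the argument stops being a proof: you state yourself that this is ``the real obstacle'' and offer only a hope that a two-variable duality in the spirit of Lemma \ref{Embedding} will close it. It will not close in any obvious way. In that duality the dual weights are doubly indexed, $\sup_{\norm{\beta}_{\ell^{\rho'}}\le1}\sum_{i,j}\beta_{ij}\langle f_{s_i,s_{i+1}},\mathcal R_k f_{s_j,s_{j+1}}\rangle$, and they cannot be decoupled into a product of one-parameter weights; once you apply Cauchy--Schwarz to separate the two variables you are back to the lossy rectangle-by-rectangle bound you correctly identified as divergent. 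Unitarity of $\mathcal R_k$ gives you control of norms of individual vectors, not of the $\ell^\rho$-size of the full array of cross inner products, and 2D $\rho$-variation of a covariance is notoriously not stable under applying an operator to the kernel in one variable. So the uniform bound $\sup_k\norm{R_{X^k}}_{\rho-var;[s,t]^2}\preceq\abs{t-s}^{1/\rho}$ that your route (via the level-2 Young estimates feeding Theorem A.13) genuinely requires is left unproved, and it is not clear it is even true under Assumption \ref{Assumption 1} alone.

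The paper's proof deliberately avoids this uniform 2D-variation question altogether, and that is the idea missing from your proposal. It verifies the hypotheses of Theorem A.15 of \cite{friz2010multidimensional} instead: pointwise $L^2$ convergence of both levels plus the \emph{pointwise-in-$(s,t)$} uniform moment bounds $\sup_k\mathbb E(X^{k;i}_{s,t})^2\le\mathbb E(X^i_{s,t})^2$ and $\sup_k\mathbb E(\mathbb X^{k;i,j}_{s,t})^2\le\mathbb E(\mathbb X^{i,j}_{s,t})^2$, the right-hand sides being controlled by Theorem \ref{Piece-wise linear rough lift}, i.e.\ by the $\rho$-variation of $R_X$ only. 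The mechanism is that $X^k_t=P_{L^2_k}(X_t)$ at level one, and --- after identifying $X^i\cdot X^j$ with $X^i\otimes X^j$ in $L^2(\Omega)\otimes L^2(\Omega)$ for $i\ne j$ --- that $\mathbb X^{k;i,j}_{s,t}=P_{L^2_k\otimes L^2_k}(\mathbb X^{i,j}_{s,t})$ at level two (the diagonal case $i=j$ being handled by $\mathbb X^{k;i,i}_{s,t}=\frac12(X^{k;i}_{s,t})^2$ and Proposition \ref{Moment equivalence}). Projections are contractions, so the uniform bounds are free, and their strong convergence to the identity gives the $L^2$ convergence; Garsia--Rodemich--Rumsey then upgrades this to convergence in the rough path metric. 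Your compactness argument for $\norm{R_{X-X^k}}_{\infty;[0,1]^2}\to0$ and the observation $X^k=P_{L^2_k}X$ are both correct and are in fact the level-one half of the paper's argument; what you need to add is the tensor-square projection identity at level two, which replaces the uniform covariance-variation estimate you could not supply.
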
	
\begin{proof}
	It is sufficient to show that for any $s,t\in [0,1]$ and $1\leq i,j\leq d$
	\begin{align}
		\label{L^2 convergence k}
		\lim_{k\rightarrow \infty}X^{k;i}_{t}= X^i_t,\ \lim_{k\rightarrow \infty}\mathbb{X}^{k;i,j}_{t}= \mathbb{X}^{i,j}_{t},
	\end{align}
	in $L^2(\Omega)$ and 
	\begin{align}
		&\sup_{k}\mathbb{E}(X^{k,i}_{s,t})^2\leq \mathbb{E}(X^i_{s,t})^2\leq C\abs{t-s}^{\frac{1}{\rho'}} \label{level 1 Holder}\\
		&\sup_{k}\mathbb{E}(	\mathbb{X}^{k;i,j}_{s,t})^2\leq \mathbb{E}(	\mathbb{X}^{i,j}_{s,t})^2\leq C\abs{t-s}^{\frac{2}{\rho'}}.\label{level 2 Holder}
	\end{align}
	Indeed, by using moment equivalence and Garsia–Rodemich–Rumsey estimate (see, for instance theorem A.12 of \cite{friz2010multidimensional}) equations \eqref{level 1 Holder} and \eqref{level 2 Holder} imply that for any $p>1$
	\begin{equation*}
		\sup_{k}\mathbb{E}\norm{\boldsymbol{X}^k}^p_{1/(2\rho'')-\text{H\"ol}}<+\infty,
		\end{equation*}	
	where $\rho''\in (\rho', 3/2)$. By passing to a sub-sequence, equation \eqref{L^2 convergence k} implies $\boldsymbol{X}^k$ converges to $\boldsymbol{X}$ in probability. Hence, by theorem A.15 of \cite{friz2010multidimensional}, we conclude
	\[ \lim_{k\rightarrow \infty}d_{2\rho''-var;[0,1]}(\boldsymbol{X}^k, \boldsymbol{X})\leq \lim_{k\rightarrow \infty}d_{\frac{1}{2\rho''}-\text{H\"ol}}(\boldsymbol{X}^k, \boldsymbol{X})= 0 \]
	in $L^p(\Omega)$ for all $p>1$. 
	The rest of our proof will focus on establishing \eqref{L^2 convergence k}\eqref{level 1 Holder}\eqref{level 2 Holder}. We divide it into several steps.\newline
	\textbf{Step 1}:
	Let 
	\[  L^2_k=\overline{Span}\{ I_n(\varphi_1), \cdots, I_n(\varphi_k)   \},  \]
	which is a closed subspace of the $n$-th homogeneous Wiener space. We denote $P_{L^2_k}$ the projection onto $L^2_k$. It is then obvious that
	\begin{align*}
		X^k_t=P_{L^2_k}(X_t).
	\end{align*}
	By properties of projection and the completeness of orthonormal basis, one has
	\[ \lim_{k\rightarrow \infty}X^{k;i}_{t}= X^i_t,\ \text{in}\ L^2(\Omega),   \]
	and
	\[ \sup_{k}\mathbb{E}(X^{k,i}_{s,t})^2\leq \mathbb{E}(X^i_{s,t})^2\leq C\abs{t-s}^{\frac{1}{\rho'}}. \]
	This finishes the level 1 estimates of \eqref{L^2 convergence k}\eqref{level 1 Holder}\eqref{level 2 Holder}.\newline
	\textbf{Step 2}:
	Let $\mathbb{X}, \mathbb{X}^{k}$ be the second level processes of $\boldsymbol{X}, \boldsymbol{X}^k$ respectively. We first consider the case where $1\leq i\neq j \leq d$. 
	
	Observe that, for two random variables $x,y\in L^2(\Omega)$ that are independent, we have
	$x\cdot y\in L^2(\Omega)$. On the other hand, for $x\otimes y$ as an element in $L^2(\Omega)\otimes L^2(\Omega)$, we have
	\[ \norm{x\otimes y}^2_{L^2(\Omega)\otimes L^2(\Omega)}=\mathbb{E}(x^2)\mathbb{E}(y^2)=\mathbb{E}(x\cdot y)^2=\norm{x\cdot y}^2_{L^2(\Omega)}.  \]
	This isometry allows us to identify $x\cdot y$ with $x\otimes y$.
	
	Back to our proof, we claim that
	\begin{equation}
		\label{Assertion}
		\mathbb{X}_{s,t}^{k;i,j}=P_{L^2_k\otimes L^2_k }(\mathbb{X}_{s,t}^{i,j}).
	\end{equation}
	To see this, let $X^{l;i}_t, X^{k,l;i}_t$ be the $i$-th component of the piece-wise linear approximation of $X_t, X^k_t$ along the partition $Q^l$ respectively. Then, we have
	\[ P_{L^2_k\otimes L^2_k }\left(\int_{s}^{t}X^{l;i}_{s,r}dX^{l;j}_r \right)=\int_{s}^{t}X^{k,l;i}_{s,r}dX^{k,l;j}_r. \]
	Notice that we have identified the integral as an element in $L^2(\Omega)\otimes L^2(\Omega)$ thanks to the independence of $X^i$ and $X^j$. The isometry we discussed before can be written as 
	\begin{align}
		\label{Isometry}
		\mathbb{E}\left(\int_{s}^{t}X^{l;i}_{s,r}dX^{l;j}_r \right)^2=\norm{\int_{s}^{t}X^{l;i}_{s,r}dX^{l;j}_r}^2_{L^2(\Omega)\otimes L^2(\Omega)}.
	\end{align}
	Combining this with theorem \ref{Piece-wise linear rough lift} gives
	\begin{align*}
		&\lim_{l\rightarrow \infty}\norm{\int_{s}^{t}X^{k,l;i}_{s,r}dX^{k,l;j}_r-\mathbb{X}_{s,t}^{k;i,j}}_{L^2(\Omega)\otimes L^2(\Omega)}\\
		=&\lim_{l\rightarrow \infty}\norm{\int_{s}^{t}X^{k,l;i}_{s,r}dX^{k,l;j}_r-\mathbb{X}_{s,t}^{k;i,j}}_{L^2(\Omega)}=0. 
	\end{align*}
	We thus deduce from the continuity of projection that
	\begin{align*}
		\mathbb{X}^{k;i,j}_{s,t}&=\lim_{\substack{l\rightarrow \infty \\ \text{in}\ L^2(\Omega)}}\int_{s}^{t}X^{k,l;i}_{s,r}dX^{k,l;j}_r\\
		&=\lim_{\substack{l\rightarrow \infty \\ \text{in}\ L^2(\Omega)\otimes L^2(\Omega)}}P_{L^2_k\otimes L^2_k }\left(\int_{s}^{t}X^{l;i}_{s,r}dX^{l;j}_r \right)=P_{L^2_k\otimes L^2_k }\left(\lim_{\substack{l\rightarrow \infty \\ \text{in}\ L^2(\Omega)\otimes L^2(\Omega)}}\int_{s}^{t}X^{l;i}_{s,r}dX^{l;j}_r \right)\\
		&=P_{L^2_k\otimes L^2_k }\left(\lim_{\substack{l\rightarrow \infty \\ \text{in}\ L^2(\Omega)}}\int_{s}^{t}X^{l;i}_{s,r}dX^{l;j}_r \right)\\
		&=P_{L^2_k\otimes L^2_k }(\mathbb{X}^{i,j}_{s,t}),
	\end{align*}
	and our assertion \eqref{Assertion} follows. 
	
	It then follows that
	\begin{align*}
		\lim_{k\rightarrow \infty}\mathbb{E}(\mathbb{X}^{k;i,j}_{s,t}-\mathbb{X}^{i,j}_{s,t})^2&=\lim_{k\rightarrow \infty}\norm{\mathbb{X}^{k;i,j}_{s,t}-\mathbb{X}^{i,j}_{s,t}}^2_{L^2(\Omega)\otimes L^2(\Omega)}\\
		&=\lim_{k\rightarrow \infty}\norm{P_{L^2_k\otimes L^2_k }(\mathbb{X}_{s,t}^{i,j})-\mathbb{X}^{i,j}_{s,t}}^2_{L^2(\Omega)\otimes L^2(\Omega)}=0,
	\end{align*}
	and
	\begin{align*}
		\sup_k\mathbb{E}(\mathbb{X}^{k;i,j}_{s,t})^2=\sup_k\norm{\mathbb{X}^{k;i,j}_{s,t}}^2_{L^2(\Omega)\otimes L^2(\Omega)}&=\sup_k\norm{P_{L^2_k\otimes L^2_k }(\mathbb{X}_{s,t}^{i,j})}^2_{L^2(\Omega)\otimes L^2(\Omega)}\\
		&\leq \norm{\mathbb{X}^{i,j}_{s,t}}^2_{L^2(\Omega)\otimes L^2(\Omega)}=\mathbb{E}(\mathbb{X}^{i,j}_{s,t})^2.
	\end{align*}
	This completes the level 2 estimates with $1\leq i\neq j\leq d$ in \eqref{L^2 convergence k}\eqref{level 1 Holder}\eqref{level 2 Holder}.\newline
	\textbf{Step 3}: For level 2 and $i=j$, we no longer have the independence between integrand and integrator. Hence the isometry \eqref{Isometry} fails. Instead, we can rely on explicit calculus computations. As we pointed out before, we have
	\[\mathbb{X}^{k;i,i}_{s,t}=\frac{1}{2}(X^{k;i}_{s,t})^2. \]
	As a result,
		\begin{align*}
		\lim_{k\rightarrow \infty}\mathbb{E}\left(\mathbb{X}^{k;i,i}_{s,t}-\mathbb{X}^{i,i}_{s,t}\right)^2&\preceq\lim_{k\rightarrow \infty}\sqrt{\mathbb{E}\abs{X^i_{s,t}}^4\mathbb{E}\abs{X^{k;i}_{s,t}-X^i_{s,t}}^4}\\
		&\preceq \lim_{k\rightarrow \infty}\mathbb{E}\abs{X^i_{s,t}}^2\mathbb{E}\abs{X^{k;i}_{s,t}-X^i_{s,t}}^2=0.
	\end{align*}
	Moreover,
	\begin{align*}
		\sup_k\mathbb{E}\left(\mathbb{X}^{k;i,i}_{s,t}\right)^2&\preceq \sup_k\mathbb{E}\left(X^{k;i}_{s,t}\right)^4\preceq \sup_k\left(\mathbb{E}(X^{k;i}_{s,t})^2\right)^2\\
		&\leq \left(\mathbb{E}(X^{i}_{s,t})^2\right)^2\leq C\abs{t-s}^{\frac{2}{\rho'}}.
	\end{align*}
	The proof is now complete.
\end{proof}

A rather straightforward consequence is the following
\begin{corollary}
	Suppose assumption \ref{Assumption 1} holds for $X_t=I_n(f_t)$. Define
	\[\mathcal{H}_X=\{S_2 (\langle f_t, h \rangle_{\mathcal{H}^{\hat{\otimes} n}}),\ h\in\mathcal{H}^{\hat{\otimes} n}   \},\]
	Then
	\[ Supp(X_t)\subset Cl(\mathcal{H}_X),  \]
	where the closure is taken in the $\alpha$-H\"older topology for $\alpha<1/(2\rho)$.
\end{corollary}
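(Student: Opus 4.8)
The plan is to deduce the inclusion from the Karhunen--Lo\`eve approximation of the preceding proposition: I will show that $Cl(\mathcal{H}_X)$ is a \emph{closed set of full measure} for the law of the natural geometric rough lift $\boldsymbol{X}$ of $X_t$ (to which, as implicit in the statement, $Supp(X_t)$ refers) in the $\alpha$-H\"older rough path topology; this forces the support---the smallest such set---to be contained in it. Fix $\alpha<1/(2\rho)$ and pick $\rho<\rho'<\rho''<3/2$ with $\rho''<1/(2\alpha)$, which is possible since $\alpha<1/(2\rho)$.

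\emph{Step 1: each approximation lands in $\mathcal{H}_X$ pathwise.} Let $\boldsymbol{X}^k$ be the rough lifts of $X^k_t=\sum_{i=1}^k\langle f_t,\varphi_i\rangle I_n(\varphi_i)$ from Lemma~\ref{K-L approximation}, and set $h^k:=\sum_{i=1}^k I_n(\varphi_i)\,\varphi_i$. For a.e. $\omega$ the element $h^k(\omega)$ is a genuine point of $\mathcal{H}^{\hat{\otimes}n}$---a finite linear combination of the $\varphi_i$ with a.s. finite random coefficients, so $\norm{h^k(\omega)}^2_{\mathcal{H}^{\hat{\otimes}n}}=\sum_{i=1}^k I_n(\varphi_i)(\omega)^2<\infty$---and $X^k_t(\omega)=\langle f_t,h^k(\omega)\rangle_{\mathcal{H}^{\hat{\otimes}n}}$ as a path in $t$. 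By Lemma~\ref{Embedding} this path has finite $\rho$-variation, and since $\rho<3/2<2$ it lies in the Young regime, so its piece-wise linear approximations along $\{Q^l\}$ converge in the rough path metric to its canonical geometric lift $S_2(\langle f_\cdot,h^k(\omega)\rangle)\in\mathcal{H}_X$. On the other hand, by the construction in Theorem~\ref{Piece-wise linear rough lift} (applied to $X^k$, along a subsequence of partitions on which convergence holds a.s.), that same limit of piece-wise linear lifts is $\boldsymbol{X}^k(\omega)$. Hence $\boldsymbol{X}^k(\omega)\in\mathcal{H}_X$ for a.e. $\omega$.

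\emph{Step 2: upgrade to a.s. convergence in the $\alpha$-H\"older metric.} The preceding proposition gives $\mathbb{E}\big(d_{2\rho'-var;[0,1]}(\boldsymbol{X}^k,\boldsymbol{X})\big)^p\to0$ for all $p>1$, so along a subsequence $(k_j)$ one has $d_{2\rho'-var;[0,1]}(\boldsymbol{X}^{k_j},\boldsymbol{X})\to0$ almost surely. Combined with the uniform bound $\sup_k\mathbb{E}\norm{\boldsymbol{X}^k}^p_{1/(2\rho'')-\text{H\"ol}}<\infty$ obtained in that proof, interpolating between this uniform H\"older bound and the $2\rho'$-variation convergence (Theorem~A.15 of \cite{friz2010multidimensional}) yields $d_{\alpha-\text{H\"ol};[0,1]}(\boldsymbol{X}^{k_j},\boldsymbol{X})\to0$ almost surely, using $\alpha<1/(2\rho'')$.

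\emph{Conclusion and the expected obstacle.} Combining the two steps, for a.e. $\omega$ the rough path $\boldsymbol{X}(\omega)$ is an $\alpha$-H\"older limit of the elements $\boldsymbol{X}^{k_j}(\omega)\in\mathcal{H}_X$, hence $\boldsymbol{X}(\omega)\in Cl(\mathcal{H}_X)$. Thus $Cl(\mathcal{H}_X)$ has full mass and, being closed, contains $Supp(X_t)$. I expect Step 1 to be the delicate point: $\boldsymbol{X}^k$ is defined only as an $L^p$-limit of piece-wise linear lifts, so one must verify that this limit agrees $\omega$-by-$\omega$ with the deterministic canonical lift of the finite-$\rho$-variation path $\langle f_\cdot,h^k(\omega)\rangle$; this is exactly where the hypothesis $\rho<3/2$ enters, as it places those paths in the Young regime in which the piece-wise linear lift is pathwise canonical and continuous, so that the $L^p$-limit and the pathwise limit necessarily coincide.
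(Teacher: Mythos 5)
Your argument is correct and is essentially the paper's proof, only written out in full: the paper's one-line justification is precisely that each $X^k$ from the Karhunen--Lo\`eve approximation is pathwise of the form $\langle f_\cdot, h\rangle$ (your Step 1, including the identification of the random element $h^k(\omega)$ and the pathwise/Young vs.\ $L^p$-limit reconciliation) combined with the convergence of $\boldsymbol{X}^k$ to $\boldsymbol{X}$ from the preceding proposition (your Step 2 and the closed-full-measure-set conclusion). No substantive difference in route; you have simply supplied the details the paper leaves implicit.
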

\begin{proof}
	This is immediate from the fact that each $X^k$ we defined in lemma \ref{K-L approximation} can be written as a linear combination of processes in the form $\langle f_t, h \rangle_{\mathcal{H}^{\hat{\otimes} n}}$ and the convergence we set up in the last proposition.

\end{proof}
\begin{remark}
	Unlike the case when $n=1$, in general, we do not have the reverse inclusion. To see this, let us consider the following simple example where  
	\[ f_t=t\varphi_1\hat{\otimes} \varphi_1. \] 
	Note that 
	\[  \{ \langle f_t, h \rangle,\ \forall h\in \mathcal{H}^{\hat{\otimes} 2} \}=\{ c\cdot \langle f_t, \varphi_1\hat{\otimes} \varphi_1 \rangle,\ c\in\mathbb{R} \}=\{c\cdot t,\ c\in\mathbb{R} \}.   \]
	But on the other hand, 
	\begin{equation*}
		Supp(I_2(f_t))=\{ c\cdot t ,\ c\geq -1  \}.
	\end{equation*}
	This is because 
	\[I_2(\varphi_1\hat{\otimes} \varphi_1)=\frac{1}{2}(I_1(\varphi_1))^2-1\geq -1.\]
	More generally, the boundedness of all even order Hermite polynomials (which appear as the coefficients of Karhunen-Lo\`eve approximation) prevents $X^k$ from hitting all possible linear combinations of $\{\langle f_t, h \rangle_{\mathcal{H}^{\hat{\otimes} n}}\}$. In addition, these coefficients, although orthonormal in $L^2(\Omega)$, are no longer independent due to the loss of Gaussianity, which makes it almost impossible to extract any more information.
\end{remark}
\subsection{Rough lift of $X_t$ together with its Malliavin derivatives}
In this subsection, let us consider 
\[ \hat{X}_t:=(X_t, DX_t, \cdots, D^nX_t)\in\mathbb{R}^d\times \mathcal{H}^d\times\cdots \times (\mathcal{H}^{\otimes n})^d,         \]
which now is a process living in an infinitely dimensional Banach space. This change from finite to infinite dimension has significant impact. To make our arguments as transparent as possible, let us first explain how we used finite dimension to our advantage in our previous constructions for the rough lift of $X_t$. 

In theorem \ref{Piece-wise linear rough lift}, we used the fact that for any $1\leq i\leq d$ 
\begin{align*}
	\mathbb{X}^{i,i}_{s,t}=&\left( \int_{s}^{t}X_{s,r}\otimes dX_r  \right)^{i,i}\\
	=&\int_{s}^{t}X^i_{s,r} dX^i_r\cdot b_i\otimes b_i=Symm\left(\int_{s}^{t}X^i_{s,r} dX^i_r\cdot b_i\otimes b_i \right),
\end{align*}
where $b_i$ is the $i$-th basis vector of $\mathbb{R}^d$ and $Symm$ denotes the symmetric part. In other words, $\mathbb{X}^{i,i}$ is symmetric. This is coming from the fact that each component $X^i_t$ is living in a one dimensional subspace of $\mathbb{R}^d$
\[ X^i_t=X^i_t\cdot b_i \in \overline{Span}\{b_i\},\ \forall t\in[0,1].  \]
Consequently, by the property \eqref{First order calculus} of geometric rough paths, we must define $\mathbb{X}^{i,i}_{s,t}=\frac{1}{2}(X^i_{s,t})^2$. There is no other choice.

However, when we consider $\hat{X}_t$, all the components involving Malliavin derivatives are not one dimensional anymore. They are living in Hilbert spaces of the form $\mathcal{H}^{\otimes k}$. In general
\begin{equation}
	\label{Infinite dimensional iterated integrals}
	\int_{s}^{t}D^kX^i_{s,r}\otimes dD^kX^i_r\neq Symm\left( \int_{s}^{t}D^kX^i_{s,r}\otimes dD^kX^i_r \right),\ k\geq 1.
\end{equation}
As a result, there is no canonical way of defining the iterated integrals appear in \eqref{Infinite dimensional iterated integrals}. 

To overcome this difficulty, we must directly work with the tensor products of $D^kX_t$ and show that integrals in \eqref{Infinite dimensional iterated integrals} converge in an appropriate sense. The price one needs to pay is to impose a condition that can control the regularity of these tensor products, which motivates our assumption \ref{Assumption 2}.

We prepare another technical lemma before our main result of this section.
\begin{lemma}
	\label{Inner product}
	For $f,g\in\mathcal{H}^{\hat{\otimes} n}$ and $1\leq k\leq n$, we have
	\[\langle D^kI_n(f), D^kI_n(g) \rangle_{\mathcal{H}^{\otimes k}}=\left( \frac{n!}{(n-k)!} \right)^2\sum_{r=0}^{n-k}r!\binom{n-k}{r}\binom{n-k}{r}I_{2n-2k-2r}(f\hat{\otimes}_{(r+k)}g).  \]
\end{lemma}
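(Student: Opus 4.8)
The plan is to reduce the computation to the product formula for multiple Wiener integrals. The first ingredient is the standard Malliavin-calculus identity
\[
D^k I_n(f) \;=\; \frac{n!}{(n-k)!}\, I_{n-k}(f),
\]
read as an $\mathcal{H}^{\otimes k}$-valued random variable: one regards $f\in\mathcal{H}^{\hat{\otimes} n}$ as an element of $\mathcal{H}^{\hat{\otimes}(n-k)}\otimes\mathcal{H}^{\otimes k}$, applies $I_{n-k}$ to the first $n-k$ slots, and keeps the last $k$ slots as the $\mathcal{H}^{\otimes k}$-value. I would obtain this by iterating the elementary rule $D\,I_m(h)=m\,I_{m-1}(h)$. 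It immediately gives
\[
\langle D^k I_n(f),\, D^k I_n(g)\rangle_{\mathcal{H}^{\otimes k}} \;=\; \left(\frac{n!}{(n-k)!}\right)^{2}\,\langle I_{n-k}(f),\, I_{n-k}(g)\rangle_{\mathcal{H}^{\otimes k}},
\]
so the task becomes to evaluate the $\mathcal{H}^{\otimes k}$-inner product of two $\mathcal{H}^{\otimes k}$-valued multiple integrals.

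Next I would fix an orthonormal basis $\{e_j\}_{j\ge 1}$ of $\mathcal{H}$ and write, for $\vec j=(j_1,\dots,j_k)$, $f^{\vec j}:=\langle f,\, e_{j_1}\otimes\cdots\otimes e_{j_k}\rangle_{\mathcal{H}^{\otimes k}}\in\mathcal{H}^{\hat{\otimes}(n-k)}$ for the contraction of the last $k$ slots of $f$ against $e_{j_1}\otimes\cdots\otimes e_{j_k}$ (unambiguous by symmetry of $f$), and similarly $g^{\vec j}$. Pairing $I_{n-k}(f)$ with the basis vector $e_{j_1}\otimes\cdots\otimes e_{j_k}$ produces $I_{n-k}(f^{\vec j})$, so the inner product expands as $\sum_{\vec j} I_{n-k}(f^{\vec j})\, I_{n-k}(g^{\vec j})$. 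Applying the product formula term by term,
\[
I_{n-k}(f^{\vec j})\, I_{n-k}(g^{\vec j}) \;=\; \sum_{r=0}^{n-k} r!\binom{n-k}{r}^{2}\, I_{2(n-k)-2r}\!\left(f^{\vec j}\,\hat{\otimes}_r\, g^{\vec j}\right),
\]
and using the $L^2(\Omega)$-continuity and linearity of $I_{2(n-k)-2r}$ to push the sum over $\vec j$ inside, I am left with the purely algebraic claim that $\sum_{\vec j} f^{\vec j}\,\hat{\otimes}_r\, g^{\vec j}=f\,\hat{\otimes}_{(r+k)}\, g$. This I would prove from the identity $\sum_j\langle a,e_j\rangle\langle b,e_j\rangle=\langle a,b\rangle$: summing $\langle f^{\vec j}, g^{\vec j}\rangle_{\mathcal{H}^{\otimes r}}$ over $\vec j$ re-contracts the $k$ pairs of slots that were paired with the $e_j$'s, giving $\langle f, g\rangle_{\mathcal{H}^{\otimes(r+k)}}$, and symmetrization is linear. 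Plugging back in and using $2(n-k)-2r=2n-2k-2r$ yields the stated formula.

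The only genuine obstacle is making the infinite sums over $\vec j$ (and the interchange with $I_{2(n-k)-2r}$) legitimate. I would handle this by first establishing the identity when $f$ and $g$ lie in the span of finitely many basis tensors $e_{i_1}\otimes\cdots\otimes e_{i_n}$, where all the sums are finite and $\sum_{\vec j} f^{\vec j}\hat{\otimes}_r g^{\vec j}=f\hat{\otimes}_{(r+k)}g$ is an elementary index bookkeeping, and then extending to general $f,g\in\mathcal{H}^{\hat{\otimes} n}$ by density: both sides of the lemma are continuous and bilinear in $(f,g)$ for the $\mathcal{H}^{\otimes n}$-norm — the left-hand side because $D^k$ maps the $n$-th chaos continuously into $L^2(\Omega;\mathcal{H}^{\otimes k})$, the right-hand side by the Wiener--It\^o isometry together with Proposition \ref{Moment equivalence}. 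An alternative route, avoiding bases altogether, is to reduce (again by bilinearity and density, since $\mathcal{H}^{\hat{\otimes} n}$ is spanned by pure powers) to $f=\phi^{\otimes n}$, $g=\psi^{\otimes n}$ with $\phi,\psi\in\mathcal{H}$; there $D^k I_n(\phi^{\otimes n})=\frac{n!}{(n-k)!}H_{n-k}(W(\phi))\,\phi^{\otimes k}$, and the identity collapses to the classical product formula for $H_{n-k}(W(\phi))\,H_{n-k}(W(\psi))$ after accounting for the scalar factor $\langle\phi,\psi\rangle^{k}=\langle\phi^{\otimes k},\psi^{\otimes k}\rangle_{\mathcal{H}^{\otimes k}}$.
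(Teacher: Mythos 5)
Your proposal is correct and follows essentially the same route as the paper's proof: the identity $D^kI_n(f)=\tfrac{n!}{(n-k)!}I_{n-k}(f)$, expansion along an orthonormal basis of $\mathcal{H}^{\otimes k}$ into the components $f^{\vec j}$, the product formula for $I_{n-k}(f^{\vec j})I_{n-k}(g^{\vec j})$, and the re-contraction identity $\sum_{\vec j}f^{\vec j}\hat{\otimes}_r g^{\vec j}=f\hat{\otimes}_{(r+k)}g$ via linearity of symmetrization. Your extra care with the convergence of the basis sums (and the alternative reduction to pure tensor powers) goes beyond what the paper writes out, but does not change the argument.
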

\begin{proof}
	Let $\{e_i\}_{i\geq 1}$ be an orthonormal basis of $\mathcal{H}$, then we have
	\begin{align*}
		&\langle D^kI_n(f), D^kI_n(g) \rangle_{\mathcal{H}^{\otimes k}}\\
		&=\sum_{i_1,\cdots, \i_k\geq 1}\langle D^kI_n(f), e_{i_1}\otimes\cdots \otimes e_{i_k} \rangle_{\mathcal{H}^{\otimes k}}\langle D^kI_n(g), e_{i_1}\otimes\cdots \otimes e_{i_k} \rangle_{\mathcal{H}^{\otimes k}}\\
		&=\left( \frac{n!}{(n-k)!} \right)^2\sum_{i_1,\cdots, \i_k\geq 1}\langle I_{n-k}(f), e_{i_1}\otimes\cdots \otimes e_{i_k} \rangle_{\mathcal{H}^{\otimes k}}\langle I_{n-k}(g), e_{i_1}\otimes\cdots \otimes e_{i_k} \rangle_{\mathcal{H}^{\otimes k}}\\
		&=\left( \frac{n!}{(n-k)!} \right)^2\sum_{i_1,\cdots, \i_k\geq 1}I_{n-k}(\langle f, e_{i_1}\otimes\cdots \otimes e_{i_k} \rangle_{\mathcal{H}^{\otimes k}})I_{n-k}(\langle g, e_{i_1}\otimes\cdots \otimes e_{i_k} \rangle_{\mathcal{H}^{\otimes k}}).
	\end{align*}
	To ease notations, we define
	\[ f^{i_1,\cdots, i_k}:=\langle f, e_{i_1}\otimes\cdots \otimes e_{i_k} \rangle_{\mathcal{H}^{\otimes k}}.  \]
	Then,
	\begin{equation*}
		\langle D^kI_n(f), D^kI_n(g) \rangle_{\mathcal{H}^{\otimes k}}=\left( \frac{n!}{k!} \right)^2\sum_{i_1,\cdots, \i_k\geq 1}I_{n-k}(f^{i_1,\cdots, i_k})I_{n-k}(g^{i_1,\cdots, i_k}).
	\end{equation*}
	By the product formula for Wiener chaos, we have
	\begin{align*}
		&I_{n-k}(f^{i_1,\cdots, i_k})I_{n-k}(g^{i_1,\cdots, i_k})\\
		&=\sum_{r=0}^{n-k}r!\binom{n-k}{r}\binom{n-k}{r}I_{2n-2k-2r}(f^{i_1,\cdots, i_k}\hat{\otimes}_rg^{i_1,\cdots, i_k}).
	\end{align*}
	By linearity of symmetrization, we conclude
	\begin{equation*}
		\langle D^kI_n(f), D^kI_n(g) \rangle_{\mathcal{H}^{\otimes k}}=\left( \frac{n!}{(n-k)!} \right)^2\sum_{r=0}^{n-k}r!\binom{n-k}{r}\binom{n-k}{r}I_{2n-2k-2r}(f\hat{\otimes}_{(r+k)}g).
	\end{equation*}
\end{proof}

We can prove our main result of this section now. As before, $\{Q^l\}_{l\geq 1}$ denotes a sequence of increasing partitions of $[0,1]$.
\begin{theorem}
	\label{Enhance piece-wise linear rough lift}
	Suppose assumption \ref{Assumption 2} holds for the components of $X_t$. Let $\hat{X}^l_t$ be the piece-wise linear approximation of $\hat{X}_t$ along $\{Q^l\}_{l\geq 1}$ and $\boldsymbol{\hat{X}}^l$ the natural lift of $\hat{X}^l_t$, then for any $\rho'\in (\rho, 3/2)$ and $p>1$, we have 
	\begin{align*}
		\lim_{l,m\rightarrow \infty}\mathbb{E}\left(d_{2\rho'-var; [0,1]}(\boldsymbol{\hat{X}}^l, \boldsymbol{\hat{X}}^m )   \right)^p=0.
	\end{align*}
	In particular, $\hat{X}_t$ admits a natural geometric rough lift $\boldsymbol{\hat{X}}_t$ and 
	\begin{equation*}
		\mathbb{E}\left( \norm{\boldsymbol{\hat{X}}_t}_{2\rho'-var;[0,1]} \right)^p<+\infty.
	\end{equation*}
\end{theorem}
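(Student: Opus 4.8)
The plan is to follow the route of Theorem~\ref{Piece-wise linear rough lift}: by theorem~A.13 of \cite{friz2010multidimensional} it suffices to prove, for all $s,t\in[0,1]$, all pairs of coordinates of $\hat{X}^{l}_t=(X^{l}_t,DX^{l}_t,\dots,D^{n}X^{l}_t)$ and all $p>1$, the uniform moment bounds
\begin{align*}
	\sup_{l}\mathbb{E}\big\|\hat{X}^{l}_{s,t}\big\|^{p}&\preceq |t-s|^{\frac{p}{2\rho'}}, &
	\sup_{l}\mathbb{E}\Big\|\int_{s}^{t}\hat{X}^{l}_{s,r}\otimes d\hat{X}^{l}_r\Big\|^{p}&\preceq |t-s|^{\frac{p}{\rho'}},
\end{align*}
together with the two Cauchy estimates in which the right-hand sides carry a prefactor $\epsilon_{l,m}\to0$. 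Since $D^{j}$ is linear, every coordinate of $\hat{X}^{l}$ has the form $D^{j}I_{n}(f^{l}_t)$ and every second-level integrand (resp.\ integrator) is a Riemann--Stieltjes product of elements of the $n$-th chaos, so all of these quantities live in a fixed finite sum of Wiener chaoses; the $\mathcal{H}^{\otimes \bullet}$-valued version of Proposition~\ref{Moment equivalence} then reduces each estimate to the corresponding second moment.

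For the first level, linearity gives $D^{j}X^{l,i}_{s,t}=D^{j}I_{n}(f^{l,i}_{s,t})$, and Lemma~\ref{Inner product} with $f=g=f^{l,i}_{s,t}$ (only the contraction of degree $n$ surviving after the expectation) yields $\mathbb{E}\|D^{j}X^{l,i}_{s,t}\|^{2}_{\mathcal{H}^{\otimes j}}=\frac{n!}{(n-j)!}\,\mathbb{E}(X^{l,i}_{s,t})^{2}$. Summing over the finitely many coordinates and controlling $\mathbb{E}(X^{l,i}_{s,t})^{2}=R_{X^{l}}([s,t]^{2})$ via Lemma~\ref{Piece-wise p-var} and Assumption~\ref{Assumption 2} with $r=n$ (which is exactly Assumption~\ref{Assumption 1}) gives $\mathbb{E}\|\hat{X}^{l}_{s,t}\|^{2}\preceq|t-s|^{1/\rho}\le|t-s|^{1/\rho'}$ uniformly in $l$; the Cauchy version is word-for-word that of Theorem~\ref{Piece-wise linear rough lift}.

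The heart of the argument is the second level, which I would treat one component $\int_{s}^{t}D^{m}X^{l,i}_{s,r}\otimes dD^{k}X^{l,j}_r$ at a time. When $i\ne j$, independence of $X^{i}$ and $X^{j}$ together with Lemma~\ref{Inner product} (again only top contractions survive the expectation) collapses the second moment, up to constants, into the $2$D Young integral $\int_{[s,t]^{2}}R([s,r]\times[s,r'])\,dR(r,r')$ already handled in Theorem~\ref{Piece-wise linear rough lift}, and that argument applies verbatim. When $i=j$ there is neither independence nor --- in contrast with the one-dimensional identity $\mathbb{X}^{i,i}_{s,t}=\frac{1}{2}(X^{i}_{s,t})^{2}$ used in Theorem~\ref{Piece-wise linear rough lift} --- any symmetry to exploit (cf.\ \eqref{Infinite dimensional iterated integrals}), and this is precisely where Assumption~\ref{Assumption 2} is indispensable. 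Expanding the Riemann sums for $\mathbb{E}\big\|\int_{s}^{t}D^{m}X^{l,i}_{s,r}\otimes dD^{k}X^{l,i}_r\big\|^{2}$, each term is $\mathbb{E}\big[\langle D^{m}I_{n}(f^{l}_{s,r_p}),D^{m}I_{n}(f^{l}_{s,r_q})\rangle_{\mathcal{H}^{\otimes m}}\,\langle D^{k}I_{n}(f^{l}_{r_p,r_{p+1}}),D^{k}I_{n}(f^{l}_{r_q,r_{q+1}})\rangle_{\mathcal{H}^{\otimes k}}\big]$; applying Lemma~\ref{Inner product} to both factors, the product formula and orthogonality of chaoses rewrite it as a finite linear combination, over contraction degrees $c$ with $\max(m,k)\le c\le n$, of $\langle f^{l}_{s,r_p}\hat{\otimes}_{c}f^{l}_{s,r_q},\,f^{l}_{r_p,r_{p+1}}\hat{\otimes}_{c}f^{l}_{r_q,r_{q+1}}\rangle$. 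For fixed $c$, put $G_{c}(r,r')=f_{s,r}\hat{\otimes}_{c}f_{s,r'}\in\mathcal{H}^{\otimes(2n-2c)}$ and observe that $f^{l}_{s,r}\hat{\otimes}_{c}f^{l}_{s,r'}$ is precisely the piece-wise linear type approximation $G_{c}^{Q^{l},Q^{l}}$ of $G_{c}$, that its rectangular $2$D increment over $[a,b]\times[a',b']$ equals $f^{l}_{a,b}\hat{\otimes}_{c}f^{l}_{a',b'}$, and that it vanishes on the two axes through $(s,s)$. Hence the sum above is the Riemann sum of the Hilbert-valued $2$D Young integral $\int_{[s,t]^{2}}\langle G_{c}^{Q^{l},Q^{l}},dG_{c}^{Q^{l},Q^{l}}\rangle$; by Lemma~\ref{Piece-wise p-var} and Assumption~\ref{Assumption 2}, $\|G_{c}^{Q^{l},Q^{l}}\|^{\rho}_{\rho-var;[s,t]^{2}}\preceq\omega([s,t]^{2})\le|t-s|$ uniformly in $l$, and since $\rho<3/2<2$ (so $2$D Young integration applies) and $\|G_{c}^{Q^{l},Q^{l}}\|_{\infty}\le\|G_{c}^{Q^{l},Q^{l}}\|_{\rho-var;[s,t]^{2}}$, this gives $\big|\int_{[s,t]^{2}}\langle G_{c}^{Q^{l},Q^{l}},dG_{c}^{Q^{l},Q^{l}}\rangle\big|\preceq\|G_{c}^{Q^{l},Q^{l}}\|^{2}_{\rho-var;[s,t]^{2}}\preceq|t-s|^{2/\rho}\le|t-s|^{2/\rho'}$. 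The Cauchy estimate at this level then comes from running the same expansion on $\mathbb{E}\|U^{l}-U^{m}\|^{2}$, $U^{l}=\int_{s}^{t}D^{m}X^{l,i}_{s,r}\otimes dD^{k}X^{l,i}_r$: the cross terms are bilinear $2$D Young integrals built from $f^{l}_{s,\cdot}\hat{\otimes}_{c}f^{m}_{s,\cdot}$, and continuity of the $2$D Young integral in both of its arguments, together with $\|f^{l}-f^{m}\|_{\infty}\to0$ and the uniform controls above, yields the prefactor $\epsilon_{l,m}\to0$.

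I expect the $i=j$ second-level estimate to be the only genuine obstacle: in infinite dimensions the iterated integral \eqref{Infinite dimensional iterated integrals} has no canonical value and no a priori finite $L^{2}$-norm, so the whole construction hinges on identifying its second moment with the tame $2$D Young integrals $\int\langle G_{c},dG_{c}\rangle$ and on Assumption~\ref{Assumption 2} supplying exactly the control on the $G_{c}$ that the $2$D Young bound requires. Once all four estimates are in hand, theorem~A.13 of \cite{friz2010multidimensional} delivers the Cauchy property; and since each $\boldsymbol{\hat{X}}^{l}$ is the natural lift of a path of bounded variation, Chen's relation, the first-order calculus \eqref{First order calculus} and geometricity pass to the limit, producing the natural geometric rough lift $\boldsymbol{\hat{X}}_t$, with $\mathbb{E}\|\boldsymbol{\hat{X}}_t\|^{p}_{2\rho'-var;[0,1]}<+\infty$ following from the uniform moment bounds and Fatou's lemma.
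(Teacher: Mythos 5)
Your proposal is correct and follows essentially the same route as the paper's proof: reduction via theorem A.13 of \cite{friz2010multidimensional}, the isometry $\mathbb{E}\langle D^kX_t,D^kX_s\rangle_{\mathcal{H}^{\otimes k}}=\frac{n!}{(n-k)!}R_X(s,t)$ for level one and the independent components, and for the diagonal case the identification of $\mathbb{E}\bigl\|\int_s^t D^mX^{l}_{s,r}\otimes dD^kX^{l}_r\bigr\|^2$ (via Lemma \ref{Inner product} and chaos orthogonality) with 2D Young integrals of the contraction kernels $f_{s,\cdot}\hat{\otimes}_c f_{s,\cdot}$, controlled through Assumption \ref{Assumption 2}, Lemma \ref{Piece-wise p-var} and interpolation. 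The only cosmetic difference is that you treat the case $i=k=0$ uniformly within the contraction expansion, whereas the paper handles it separately via the symmetry $\int_s^t X^{l}_{s,r}dX^{l}_r=\tfrac12(X^{l}_{s,t})^2$; both yield the same estimates.
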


\begin{proof}
		Let 
		\[ \hat{X}^{i,j}_t=D^iX^j_t,\ \hat{X}^{l;i,j}_t=D^iX^{l;j}_t,   \]
		where $X^{l;j}$ is the $j$-th component of $X^l_t$.
		We argue similarly to theorem \ref{Piece-wise linear rough lift} and it is enough to show that for any $s,t\in[0,1]$, $0\leq i, k\leq n,\ 1\leq j_1,j_2\leq d$ and $p>1$
	\begin{align}
		\mathbb{E}\norm{X^{l;i,j_1}_{s,t}-X^{m;i,j_1}_{s,t}}_{\mathcal{H}^{\otimes i}}^p&\preceq \epsilon \abs{t-s}^{\frac{p}{2\rho'}}, \label{Enhance level 1} \\
		\mathbb{E}\norm{\int_{s}^{t}X^{l;i,j_1}_{s,r}dX^{l;k,j_2}_r-\int_{s}^{t}X^{m;i,j_1}_{s,r}dX^{m;k,j_2}_r}_{\mathcal{H}^{\otimes(i+k) }}^p&\preceq  \epsilon \abs{t-s}^{\frac{p}{\rho'}},\label{Enhance level 2}
	\end{align}
	Notice that for any $0\leq k\leq n$, we have
	\[ \mathbb{E}\langle D^kX_t, D^kX_s \rangle_{\mathcal{H}^{\otimes k}}=\frac{n!}{(n-k)!}\mathbb{E}(X_sX_t)=\frac{n!}{(n-k)!} R_X(s,t). \]
	When $j_1\neq j_2$, we may take advantage of independence between different components and estimates \eqref{Enhance level 1} and \eqref{Enhance level 2} can be established in the exact same way as we did in theorem \ref{Piece-wise linear rough lift} with the same
	\[\epsilon=\norm{R_{X^l-X^m}}^{\frac{\rho'-\rho}{\rho'}\cdot \frac{p}{2}}_{\infty;[0,1]^2}.\]
	 We thus omit these details and focus only on \eqref{Enhance level 2} with $j_1=j_2$, which is where new ingredients of assumption \ref{Assumption 2} are used.
	
	 We first consider the case where $i=k=0,\ j_1=j_2$ (this is the finite dimensional case in our discussion at the beginning of this subsection). We have $\hat{X}^{l;0,j_1}=X^{l;j_1}$. Since in this case integrals are symmetric , we have by simple calculus that
	\begin{equation}
		\label{Its symmetric}
		\int_{s}^{t}\hat{X}^{l;0,j_1}_{s,r}d\hat{X}^{l;0,j_1}_r=\int_{s}^{t}X^{l;j_1}_{s,r}dX^{l;j_1}_r=\frac{1}{2}(X^{l;j_1}_t-X^{l;j_1}_s)^2=\frac{1}{2}(X^{l;j_1}_{s,t})^2.
	\end{equation}
	Just as theorem \ref{Piece-wise linear rough lift}, we have
	\begin{align}
		\mathbb{E}\left(\int_{s}^{t}X^{l;j_1}_{s,r}dX^{l;j_1}_r-\int_{s}^{t}X^{m;j_1}_{s,r}dX^{m;j_1}_r\right)^2&=\frac{1}{4}\mathbb{E}\left( (X^{l;j_1}_{s,t}-X^{m;j_1}_{s,t} )(X^{l;j_1}_{s,t}+X^{m;j_1}_{s,t} )  \right)^2\nonumber\\
		&\leq\frac{1}{4}  \mathbb{E}(X^{l;i}_{s,t}-X^{m;i}_{s,t} )^2\mathbb{E}(X^{l;i}_{s,t}+X^{m;i}_{s,t} )^2 \nonumber\\
		&\preceq \norm{R_{X^l-X^m}}_{\rho'-var; [s,t]^2} \norm{R}_{\rho-var;[s,t]\times [s,t]}\nonumber\\
		&\preceq \norm{R_{X^l-X^m}}^{\frac{\rho'-\rho}{\rho'}}_{\infty;[0,1]^2} \abs{t-s}^{\frac{2}{\rho'}}.\label{Enhance level 2 piece 1}
	\end{align}

	Next, for $i,k>0,\ j_1=j_2$, \eqref{Its symmetric} not longer works (this is the infinite dimensional case). Instead, we have
	\begin{align}
		\label{Discrete integral for Malliavin derivative}
		\norm{\int_{s}^{t}\hat{X}^{l;i,j_1}_{s,r}\otimes d\hat{X}^{l;k,j_1}_{r}}^2_{\mathcal{H}^{\otimes i}\otimes \mathcal{H}^{\otimes k}}=\int_{[s,t]\times [s,t]}\langle \hat{X}^{l;i,j_1}_{s,r}, \hat{X}^{l;i,j_1}_{s,u} \rangle_{\mathcal{H}^{\otimes i}}d\langle \hat{X}^{l;k,j_1}_{r}, \hat{X}^{l;k,j_1}_{u} \rangle_{\mathcal{H}^{\otimes k}}.
	\end{align}
	
	As a result,
	\begin{align}
		&\mathbb{E}\norm{\int_{s}^{t}\hat{X}^{l;i,j_1}_{s,r}\otimes d\hat{X}^{l;k,j_1}_{r}-\int_{s}^{t}\hat{X}^{m;i,j_1}_{s,r}\otimes d\hat{X}^{m;k,j_1}_{r}}^2_{\mathcal{H}^{\otimes i}\otimes \mathcal{H}^{\otimes k}}\nonumber\\
		&\preceq \mathbb{E}\norm{\int_{s}^{t}\hat{X}^{l;i,j_1}_{s,r}-\hat{X}^{m;i,j_1}_{s,r}\otimes d\hat{X}^{l;k,j_1}_{r}}^2_{\mathcal{H}^{\otimes i}\otimes \mathcal{H}^{\otimes k}}+\mathbb{E}\norm{\int_{s}^{t}\hat{X}^{m;i,j_1}_{s,r}\otimes d(\hat{X}^{m;k,j_1}_{r}-\hat{X}^{l;k,j_1}_{r})}^2_{\mathcal{H}^{\otimes i}\otimes \mathcal{H}^{\otimes k}}\nonumber\\
		&=\int_{s}^{t}\int_{s}^{t}\mathbb{E}\langle \hat{X}^{l;i,j_1}_{s,r_1}-\hat{X}^{m;i,j_1}_{s,r_1}, \hat{X}^{l;i,j_1}_{s,r_2}-\hat{X}^{m;i,j_1}_{s,r_2} \rangle_{\mathcal{H}^{\otimes i}}d\langle \hat{X}^{l;k,j_1}_{r_1}, \hat{X}^{l;k,j_1}_{r_2} \rangle_{\mathcal{H}^{\otimes k}} \label{Complicated sum} \\
		&+\int_{s}^{t}\int_{s}^{t}\mathbb{E}\langle \hat{X}^{m;i,j_1}_{s,r_1}, \hat{X}^{m;i,j_1}_{s,r_2} \rangle_{\mathcal{H}^{\otimes i}}d\langle \hat{X}^{m;k,j_1}_{r_1}-\hat{X}^{l;k,j_1}_{r_1}, \hat{X}^{m;k,j_1}_{r_2}-\hat{X}^{l;k,j_1}_{r_2} \rangle_{\mathcal{H}^{\otimes k}}\nonumber
	\end{align}
	By the orthogonality of Wiener chaos and lemma \ref{Inner product}, the right-hand side of \eqref{Complicated sum} can be written as
	\begin{align}
		&=\sum_{w=0}^{i \wedge k} C_{n,i,k,w}	\int_{s}^{t}\int_{s}^{t} \langle f^l_{s,r_1}-f^m_{s,r_1}\hat{\otimes}_{(n-w)} f^l_{s,r_2}-f^m_{s,r_2}, d(f^l_{r_1}\hat{\otimes}_{(n-w)} f^l_{r_2} ) \rangle_{\mathcal{H}^{\otimes 2w}}\nonumber\\
		&+\sum_{w=0}^{i \wedge k} C_{n,i,k,w}\int_{s}^{t}\int_{s}^{t} \langle f^m_{s,r_1}\hat{\otimes}_{(n-w)} f^m_{s,r_2},d (f^m_{r_1}-f^l_{r_1}\hat{\otimes}_{(n-w)} f^m_{r_2}-f^l_{r_2}) \rangle_{\mathcal{H}^{\otimes 2w}}\nonumber\\
		&\preceq \sum_{w=0}^{i \wedge k} \int_{s}^{t}\int_{s}^{t} \norm{f^l_{s,r_1}-f^m_{s,r_1}\hat{\otimes}_{(n-w)} f^l_{s,r_2}-f^m_{s,r_2}}_{\mathcal{H}^{\otimes 2r}}d\norm{f^l_{r_1}\hat{\otimes}_{(n-w)} f^l_{r_2}}_{\mathcal{H}^{\otimes 2w}}\label{The ugly sum}\\
		&+ \sum_{w=0}^{i \wedge k} \int_{s}^{t}\int_{s}^{t} \norm{f^m_{s,r_1}\hat{\otimes}_{(n-w)} f^m_{s,r_2}}_{\mathcal{H}^{\otimes 2r}}d\norm{f^m_{r_1}-f^l_{r_1}\hat{\otimes}_{(n-w)} f^m_{r_2}-f^l_{r_2}}_{\mathcal{H}^{\otimes 2w}}.\nonumber
	\end{align}
	Moreover, by assumption \ref{Assumption 2}, all functions
	\[ R^w_f(s,t)=f_s\hat{\otimes}_wf_t,\ 1\leq w\leq n  \]
	satisfy
	\[  \norm{R^w([s,t],[ u,v])}_{\mathcal{H}^{\otimes_{2(n-w)}}}\leq \omega^{\frac{1}{\rho}}([s,t]\times [u,v]).  \]
	Thus, we may control the first term on the right hand side of \eqref{The ugly sum} as a 2D Young's integral and get
	\begin{align*}
		&\int_{s}^{t}\int_{s}^{t} \norm{f^l_{s,r_1}-f^m_{s,r_1}\hat{\otimes}_{(n-w)} f^l_{s,r_2}-f^m_{s,r_2}}_{\mathcal{H}^{\otimes 2w}}d\norm{f^l_{r_1}\hat{\otimes}_{(n-w)} f^l_{r_2}}_{\mathcal{H}^{\otimes 2w}}\\
		&\leq \norm{R^{n-w}_{f^l-f^m}}_{\rho'-var;[s,t]^2}\norm{R^{n-w}_{f^l}}_{\rho-var;[s,t]^2}\\
		&\preceq\norm{R^{n-w}_{f^l-f^m}}^{\frac{\rho'-\rho}{\rho'}}_{\infty;[0,1]}\abs{t-s}^{\frac{2}{\rho'}},
	\end{align*}
	where we used interpolation and lemma \ref{Piece-wise p-var} in the last inequality. Same estimate works for the second term on the right-hand side of \eqref{The ugly sum} as well. We thus arrive at
	\begin{align}
		&\mathbb{E}\norm{\int_{s}^{t}\hat{X}^{l;i,j_1}_{s,r}\otimes d\hat{X}^{l;k,j_1}_{r}-\int_{s}^{t}\hat{X}^{m;i,j_1}_{s,r}\otimes d\hat{X}^{m;k,j_1}_{r}}^2_{\mathcal{H}^{\otimes i}\times \mathcal{H}^{\otimes k}}\nonumber\\
		&\preceq\norm{R^{n-w}_{f^l-f^m}}^{\frac{\rho'-\rho}{\rho'}}_{\infty;[0,1]}\abs{t-s}^{\frac{2}{\rho'}}.\label{Enhance level 2 piece 2}
	\end{align}
	Combining \eqref{Enhance level 2 piece 1} and \eqref{Enhance level 2 piece 2} and another application of moment equivalence of Wiener chaos gives
	\begin{align}
		&\mathbb{E}\norm{\int_{s}^{t}\hat{X}^{l;i,j_1}_{s,r}\otimes d\hat{X}^{l;k,j_1}_{r}-\int_{s}^{t}\hat{X}^{m;i,j_1}_{s,r}\otimes d\hat{X}^{m;k,j_1}_{r}}^p_{\mathcal{H}^{\otimes i}\times \mathcal{H}^{\otimes k}}\nonumber\\
		&\preceq \max_{1\leq w\leq n}\norm{R^{n-w}_{f^l-f^m}}^{\frac{\rho'-\rho}{\rho'}\cdot\frac{p}{2}}_{\infty;[0,1]}\abs{t-s}^{\frac{p}{\rho'}},\ \forall 0\leq i,k\leq n.\label{Enhance level 2 final}
	\end{align}
	Notice we used in the previous estimate that
	\[ \norm{R_{X^l-X^m}}_{\infty;[0,1]}= \norm{R^{n}_{f^l-f^m}}_{\infty;[0,1]}. \]
	Gathering \eqref{Enhance level 2 final} and the parts we omitted gives \eqref{Enhance level 1} and \eqref{Enhance level 2} with
	\[ \epsilon=\max_{1\leq w\leq n}\norm{R^{n-w}_{f^l-f^m}}^{\frac{\rho'-\rho}{\rho'}\cdot\frac{p}{2}}_{\infty;[0,1]}.  \]
	Our result now follows from
	\[ \lim_{l,m\rightarrow \infty}\max_{1\leq w\leq n}\norm{R^{n-w}_{f^l-f^m}}_{\infty;[0,1]}=0. \]
	The proof is complete.
\end{proof}
	We conclude this section with an example. It is well known that we have a wide variety of Gaussian processes that satisfy assumption \ref{Assumption 1}. We show that it is in fact, relatively easy to use these Gaussian processes to construct a chaos process $X_t=I_n(f_t),\ n>1,$ that satisfies assumption \ref{Assumption 2}. 
\begin{example}
	Let $Y^i_t=I_1(g^i_t),\ 1\leq i\leq n$ be $n$ independent Gaussian processes such that $\{g^i_t\}_{0\leq t\leq 1}$ has finite $\rho$-variation in $\mathcal{H}$ for $1\leq i\leq n$ and $\rho\in [1,3/2)$. Then, 
	\[X_t=\prod_{i=1}^{n}Y^i_t= I_n( g^1_t\hat{\otimes} g^2_t \hat{\otimes}\cdots \hat{\otimes} g^n_t )  \]
	satisfies assumption \ref{Assumption 2}. 
	\begin{proof}
		We only show the case for $n=2$, the general case is similar. Let $f_t=g^1_t\hat{\otimes} g^2_t$, a standard computation gives
		\begin{align*}
			f_{s,t}&=g^1_{s,t}\hat{\otimes} g^2_t+g^1_s\hat{\otimes}g^2_{s,t}\\
			f_{u,v}&=g^1_{u,v}\hat{\otimes} g^2_v+g^1_u\hat{\otimes}g^2_{u,v}.
		\end{align*}
		Then,
		\begin{align*}
			&\abs{f_{s,t}\hat{\otimes}_2 f_{u,v}}\\
			=&\abs{\langle g^1_{s,t}, g^1_{u,v} \rangle_{\mathcal{H}} \langle g^2_t ,g^2_v \rangle_{\mathcal{H}}+ \langle g^1_t ,g^1_v \rangle_{\mathcal{H}}\langle g^2_{s,t}, g^2_{u,v} \rangle_{\mathcal{H}}}\\
			\leq& \left(\sum_{i=1}^{2}\norm{g^i}_{\rho-var;[s,t]}\norm{g^i}_{\rho-var;[u,v]}\right)\sum_{i=1}^{2}\left( \norm{g^i}_{\rho-var;[0,1]}+\norm{g^i_0}_{\mathcal{H}}   \right)^2.
		\end{align*}
		Moreover,
		\begin{align*}
			&\norm{f_{s,t}\hat{\otimes}_1 f_{u,v}}_{\mathcal{H}^{\otimes 2}}\\
			=&\norm{ \langle g^1_{s,t}, g^1_{u,v} \rangle_{\mathcal{H}}\cdot g_t\hat{\otimes} g_v+ \langle g^2_{s,t}, g^2_{u,v} \rangle_{\mathcal{H}}\cdot g^1_u\hat{\otimes} g^1_s  }_{\mathcal{H}^{\otimes 2}}\\
			\leq& \left(\sum_{i=1}^{2}\norm{g^i}_{\rho-var;[s,t]}\norm{g^i}_{\rho-var;[u,v]}\right)\sum_{i=1}^{2}\left( \norm{g^i}_{\rho-var;[0,1]}+\norm{g^i_0}_{\mathcal{H}}   \right)^2.
		\end{align*}
		We conclude that $X_t$ satisfy assumption \ref{Assumption 2} with control
		\[ \omega([s,t]\times[u,v])=2^{\rho-1}\left(\sum_{i=1}^{2}\norm{g^i}^\rho_{\rho-var;[s,t]}\norm{g^i}^\rho_{\rho-var;[u,v]}\right)\sum_{i=1}^{2}\left( \norm{g^i}_{\rho-var;[0,1]}+\norm{g^i_0}_{\mathcal{H}}   \right)^{2\rho}.  \]
	\end{proof}
\end{example}

\section{Large deviation}
The large deviation principle for a single random variable in the $n$-th chaos is well known (see \cite{MR1600888}). Our goal in this section is to first establish a large deviation principle for a general chaos process $X_t$, then extend to its rough lift $\boldsymbol{X}_t$.

Let $\mathcal{S}=C([0,1], \mathbb{R}^d)$ equipped with the uniform topology. We define the following rate function on $\mathcal{S}$
\begin{equation*}
	I(x)=\begin{cases*}
		\inf\left\{\frac{1}{2}\norm{h}^2_{\mathcal{H}^d}\mid\ h\in\mathcal{H}^d,\ \langle f_t, (h^i)^{\otimes n} \rangle=x^i, 1\leq i\leq d \right\} ,\\
		+\infty\;\; \text{if no such $h$ exists.}
	\end{cases*}
\end{equation*}
 For a Borel set in $\mathcal{S}$, we set $I(A)=\inf_{x\in A}I(x)$. We first establish a large deviation principle for the process $X_t=I_n(f_t)\in\mathbb{R}$.
\begin{proposition}
	\label{LDP for level 1}
	Suppose assumption \ref{Assumption 1} holds for the components of $X_t$. The family of random processes $\{X_t(\epsilon\omega)\}_{\epsilon>0} $ satisfies a large deviation with
	 good rate function $I(x)$.
\end{proposition}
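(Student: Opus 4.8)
The plan is to obtain this from the large deviation principle for a vector of $n$-th chaos random variables via an exponentially good approximation. Throughout, ``$X_t(\epsilon\omega)$'' is read as the process driven by the dilated noise $\epsilon\omega$; since $I_n$ is a degree-$n$ iterated integral this coincides with the homogeneous rescaling $\epsilon^{n}X_t$, and the large deviation holds at speed $\epsilon^{2}$, so that Borell's tail behaviour $\mathbb{P}(|I_n(g)|>r)\approx e^{-c\,r^{2/n}}$ produces exactly the quadratic Cameron--Martin cost appearing in $I$.

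\textbf{Finite-dimensional input.} Let $P_k$ be the orthogonal projection of $\mathcal H^{\hat{\otimes} n}$ onto $\operatorname{span}\{\varphi_1,\dots,\varphi_k\}$ and $X^{k}_t$ the Karhunen--Lo\`eve partial sums of Lemma \ref{K-L approximation}, whose components are independent copies of $I_n(P_kf_t)=\sum_{i\le k}\langle f_t,\varphi_i\rangle I_n(\varphi_i)$. Since $X^{k}$ is the image of the $kd$-vector $V^{k}=\bigl(I_n(\varphi_i)^{(j)}\bigr)_{i\le k,\,j\le d}$ of $n$-th chaos variables under the bounded linear map $v\mapsto\bigl(t\mapsto(\sum_{i\le k}\langle f_t,\varphi_i\rangle v^{j}_i)_{1\le j\le d}\bigr)$ from $\R^{kd}$ to $\mathcal S$, it suffices to have an LDP for $\epsilon^{n}V^{k}$; that LDP, at speed $\epsilon^{2}$ with rate $v\mapsto\inf\{\tfrac12\sum_j\norm{h^{j}}^{2}_{\mathcal H}:\ \langle\varphi_i,(h^{j})^{\otimes n}\rangle=v^{j}_i\ \text{for}\ i\le k\}$, is the vector version of the single-variable chaos LDP recalled as \cite{MR1600888} (and can alternatively be read off from Schilder's theorem by a contraction/exponential-equivalence argument). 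Pushing $\epsilon^{n}V^{k}$ forward through the linear map gives an LDP for $\epsilon^{n}X^{k}$ at speed $\epsilon^{2}$ with a good rate function $I^{k}$. (If one insists that $X^{k}$ be a genuine function of finitely many Gaussians --- the only place finite dimensionality is really needed --- one inserts one further approximation, replacing each $\varphi_i$ by a finite combination of tensor products of a fixed finite family of basis vectors, at the cost of one more copy of the next step.)

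\textbf{Exponentially good approximation and tightness.} The crux is to prove
\[
\lim_{k\to\infty}\ \limsup_{\epsilon\to0}\ \epsilon^{2}\log\mathbb{P}\left(\norm{X^{k}-X}_{\infty;[0,1]}>\delta\,\epsilon^{-n}\right)=-\infty\qquad\text{for every }\delta>0,
\]
together with the same statement with $X^{k}$ omitted (which gives exponential tightness of $\{\epsilon^{n}X\}$ in $\mathcal S$, as it confines the paths to a compact H\"older ball outside an event of exponentially small probability). Each component of $X^{k}_t-X_t$ equals $I_n((P_k-I)f_t)$, an $n$-th chaos process, and $\norm{I_n((P_k-I)f_{s,t})}_{L^{2}}\preceq\eta_k\,\abs{t-s}^{1/(2\rho')}$ with $\eta_k\to0$: this follows by interpolating the estimate $\norm{(P_k-I)f_{s,t}}\le\norm{f_{s,t}}$ controlled by Assumption \ref{Assumption 1} against the uniform bound $\norm{(P_k-I)f_{s,t}}\le2\zeta_k$, $\zeta_k:=\sup_{r}\norm{(P_k-I)f_r}\to0$ (valid since $r\mapsto f_r$ is continuous, hence $\{f_r:r\in[0,1]\}$ is compact). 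Feeding this into a Garsia--Rodemich--Rumsey estimate, together with the moment equivalence (Proposition \ref{Moment equivalence}) and Borell's hypercontractive bound $\mathbb{P}(|F|>r)\le\exp(-c_n(r/\norm{F}_{L^{2}})^{2/n})$ for $F$ in the $n$-th chaos, yields $\mathbb{P}(\norm{X^{k}-X}_{\infty}>R)\le C\exp(-c\,(R/\eta_k)^{2/n})$; with $R=\delta\,\epsilon^{-n}$ this gives $\epsilon^{2}\log(\cdot)\le-c(\delta/\eta_k)^{2/n}\to-\infty$, and the case $X^{k}=0$ is identical with $\eta_k$ replaced by a fixed constant. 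I expect this to be the main obstacle: upgrading the covariance-level information of Assumption \ref{Assumption 1} to a sup-norm tail bound whose exponential rate \emph{improves} to $-\infty$ as $k\to\infty$.

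\textbf{Identifying the rate function.} By the transfer theorem for exponentially good approximations, $\epsilon^{n}X$ then satisfies an LDP at speed $\epsilon^{2}$ with rate $\bar I(x)=\sup_{\delta>0}\liminf_{k}\inf_{\norm{y-x}_\infty<\delta}I^{k}(y)$, and it remains to check $\bar I=I$. For $\bar I\le I$: given $h\in\mathcal H^{d}$ with $\langle f_t,(h^{j})^{\otimes n}\rangle=x^{j}_t$ for all $t$, put $x^{k,j}_t=\langle P_kf_t,(h^{j})^{\otimes n}\rangle$; then $I^{k}(x^{k})\le\tfrac12\sum_j\norm{h^{j}}^{2}$ while $\norm{x^{k}-x}_\infty\le\zeta_k\max_j\norm{h^{j}}^{n}\to0$, so $\bar I(x)\le\tfrac12\sum_j\norm{h^{j}}^{2}$; optimize over $h$. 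For $\bar I\ge I$, assuming $\bar I(x)<\infty$: choose $x^{k}\to x$ in $\mathcal S$ with $\limsup_k I^{k}(x^{k})\le\bar I(x)$ and near-minimizers $h^{k}$, so $\sup_k\sum_j\norm{h^{k,j}}^{2}<\infty$; passing to a weak limit $h^{k}\rightharpoonup h$ in $\mathcal H^{d}$ and testing against product tensors gives $(h^{k,j})^{\otimes n}\rightharpoonup(h^{j})^{\otimes n}$ in $\mathcal H^{\otimes n}$, whence $x^{k,j}_t=\langle f_t,P_k(h^{k,j})^{\otimes n}\rangle\to\langle f_t,(h^{j})^{\otimes n}\rangle$ for every $t$; comparing with the uniform convergence $x^{k}\to x$ yields $x^{j}_t=\langle f_t,(h^{j})^{\otimes n}\rangle$ for all $t$, and weak lower semicontinuity of $\norm{\cdot}$ gives $I(x)\le\tfrac12\sum_j\norm{h^{j}}^{2}\le\bar I(x)$. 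Finally, goodness of $I$ is part of the conclusion of the approximation theorem (or follows directly from the exponential tightness of the previous step).
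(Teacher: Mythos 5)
Your proposal is correct in outline, but it proves the proposition by a genuinely different route than the paper. The paper argues directly at the process level: for the lower bound it applies the Cameron--Martin theorem to the event $U=\{\sup_t\abs{X_t-r^n\langle f_t,h^{\otimes n}\rangle}<r^ns\}$ and uses Jensen's inequality plus the Stroock--Taylor expansion $X_t(\omega+rh)=\sum_k \frac{r^k}{k!}\langle D^kX_t,h^{\otimes k}\rangle$ to show $\mathbb{P}(U-rh)>1/2$ for large $r$; for the upper bound it uses the compactness of $K=\cup_{h\in B(\mathcal{H})}(2\lambda)^{n/2}\langle f_t,h^{\otimes n}\rangle$ and the Gaussian isoperimetric inequality applied to the enlargement $V'+r\sqrt{2\lambda}B(\mathcal{H})$. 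In effect the paper re-proves the Banach-space-valued chaos LDP from scratch in $C([0,1],\mathbb{R}^d)$. You instead reduce to the classical finite-dimensional chaos LDP and transfer it through the Karhunen--Lo\`eve truncation via exponentially good approximations --- which is precisely the machinery (Dembo--Zeitouni's extended contraction principle) that the paper reserves for lifting the LDP to the rough path level later in the same section, so your argument is arguably more uniform with the rest of Section 4. The price you pay is the two extra blocks of work you correctly identify: the tail bound $\mathbb{P}(\norm{X^k-X}_\infty>R)\preceq\exp(-c(R/\eta_k)^{2/n})$ with $\eta_k\to0$ (your interpolation of $\norm{(P_k-I)f_{s,t}}$ against $\zeta_k=\sup_r\norm{(P_k-I)f_r}\to0$, followed by Garsia--Rodemich--Rumsey, moment equivalence and Chebyshev with $p\sim(R/\eta_k)^{2/n}$, is sound and mirrors the paper's own Lemma \ref{Exponential goodness}), and the identification $\bar I=I$ via weak compactness of near-minimizers, where your observation that bounded weak convergence $h^k\rightharpoonup h$ passes to $(h^{k,j})^{\otimes n}\rightharpoonup (h^j)^{\otimes n}$ (test against product tensors, use boundedness and density) is the key point and is correct. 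Both proofs ultimately rest on Gaussian isoperimetry --- yours through the cited finite-dimensional result and the hypercontractive tail bound, the paper's explicitly --- and both need the continuity of $t\mapsto f_t$ (you for $\zeta_k\to0$, the paper for compactness of $K$). One small bonus of your route: it stays entirely within Assumption \ref{Assumption 1}, whereas the paper's lower bound invokes Theorem \ref{Enhance piece-wise linear rough lift} (which formally requires Assumption \ref{Assumption 2}) to get $\sup_t\norm{D^kX_t}\in L^p$, a fact that can in any case be recovered from Assumption \ref{Assumption 1} alone.
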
 
\begin{proof}
	Without loss of generality, we assume $d=1$. Since $I_n(f_t)$ belongs to the $n$-th homogeneous chaos, by replacing $\epsilon$ with $1/r$, it is enough to show
	\begin{align*}
		\liminf_{r\rightarrow \infty}\frac{1}{r^2}\log\mathbb{P}(X_t\in r^nA )&\geq-I(A),\ \forall A\subset \mathcal{S}\ \text{open},\\
		\limsup_{r\rightarrow \infty}\frac{1}{r^2}\log\mathbb{P}(X_t\in r^nA )&\leq-I(A),\ \forall A\subset \mathcal{S}\ \text{closed}.
	\end{align*}
	We will split our proof into two parts. \newline
	\textbf{Lower Bound}: Let $A$ be an open subset of $\mathcal{S}$ and $B(\mathcal{H})$ be the unit ball in $\mathcal{H}$. Suppose we can find $h\in\mathcal{H}$ such that $\langle f_t, h^{\otimes n} \rangle\in A$ (if no such $h$ exists, the lower bound is trivially true).  Since $A$ is open, we can find $s>0$ such that 
	\[ V:=V\left(\langle f_t, h^{\otimes n} \rangle, s   \right)\in A,  \] 
	where $V(x,s)$ denotes the $s$-neighborhood of $x$. Thus,
	\begin{align*}
		\mathbb{P}(X_t\in r^nA )\geq \mathbb{P}(X_t\in r^nV )=\mathbb{P}\left(\sup_t\abs{X_t-r^n\langle f_t, h^{\otimes n} \rangle}<r^ns \right)=\mathbb{P}(U),
	\end{align*}
	where
	\[U=\left\{ \omega\in\Omega \mid \sup_t\abs{X_t(\omega)-\langle f_t, h^{\otimes n} \rangle}<r^ns   \right\}.  \]
	By Cameron-Martin theorem,
	\begin{align*}
		\mathbb{P}(U)=\int_{U-rh}\exp\{r\hat{h}-r^2\norm{h}^2_\mathcal{H}/2 \}\mathbb{P}(d\omega).
	\end{align*}
	An application of Jensen's inequality gives
	\begin{align}
		\label{Jensen}
		\mathbb{P}(U)\geq\exp\left\{ -r^2\norm{h}^2_\mathcal{H}/2 \right\}\mathbb{P}(U-rh)\exp\left\{\frac{r}{\mathbb{P}(U-rh)}\int_{U-rh}\hat{h} \mathbb{P}(d\omega) \right\}
	\end{align}
	By theorem \ref{Enhance piece-wise linear rough lift}, we know $\sup_t\norm{D^kX_t}\in L^p(\Omega)$ for all $p>1$. Thus,
	\begin{align*}
		\lim_{r\rightarrow \infty}	\frac{1}{r^n}\mathbb{E}\left(\sup_t\abs{X_t(x+rh)-r^n\langle f_t, h^{\otimes n} \rangle}  \right)
		=\lim_{r\rightarrow \infty}\frac{1}{r^n}\mathbb{E}\left( \sum_{k=0}^{n-1}r^k\sup_t\norm{D^kX_t}\norm{h}^k   \right)=0.
	\end{align*}
	We deduce that when $r$ is sufficiently large,
	\begin{align}
		\label{Lower bound 1}
		\mathbb{P}(U-rh)=\mathbb{P}\left( \sup_t\abs{X_t(x+rh)-r^n\langle f_t, h^{\otimes n} \rangle}<r^n s\right)>1/2.
	\end{align}
	On the other hand,
	\begin{align}
		\label{Lower bound 2}
		\frac{r}{\mathbb{P}(U-rh)}\int_{U-rh}\hat{h} \mathbb{P}(d\omega)\geq -\frac{r}{\mathbb{P}(U-rh)}\int_{U-rh}\mid \hat{h} \mid \mathbb{P}(d\omega)\geq -r \norm{h}_\mathcal{H}.
	\end{align}
	Combining \eqref{Jensen}, \eqref{Lower bound 1} and \eqref{Lower bound 2} gives that when $r$ is large enough,
	\begin{equation*}
		\mathbb{P}(U)\geq \frac{1}{2}\exp\left\{ -r^2\norm{h}^2_\mathcal{H}/2-r\norm{h}_{\mathcal{H}} \right\}.
	\end{equation*}
	Taking limit gives
	\begin{equation*}
		\liminf_{r\rightarrow \infty}\frac{1}{r^2}\log\mathbb{P}(X_t\in r^nA )\geq\liminf_{r\rightarrow \infty}\frac{1}{r^2}\log\mathbb{P}(U)\geq-\frac{1}{2}\norm{h}^2_{\mathcal{H}}.
	\end{equation*}
	Maximizing over $h$ gives
	\begin{equation*}
		\liminf_{r\rightarrow \infty}\frac{1}{r^2}\log\mathbb{P}(X_t\in r^nA )\geq-I(A),
	\end{equation*}
	which is the desired lower bound.\newline
	\textbf{Upper Bound}: Let $A$ be a closed set in $\mathcal{S}$ and take $0<\lambda<I(A)$. By the definition of $I(A)$, we have
	\[  \cup_{h\in B(\mathcal{H})}(2\lambda)^{\frac{n}{2}}\langle f_t, h^{\otimes n} \rangle \cap A=\emptyset.  \] 
	Since $B(\mathcal{H})$ is compact, the set 
	\[ K= \cup_{h\in B(\mathcal{H})}(2\lambda)^{\frac{n}{2}}\langle f_t, h^{\otimes n} \rangle \]
	is also compact. Hence, $K$ and $A$ are disjoint, and we can find $\eta>0$ such that
	\[ \{x\in \mathcal{S}\mid d(x,K)<(2\lambda)^{\frac{n}{2}}\eta\} \cap A=\emptyset.  \]
	Now, let us consider
	\[ V'=\{\omega\in\Omega \mid \sup_t\abs{X_t(\omega+rh)-r^n\langle f_t, h^{\otimes n} \rangle }<r^n\eta  \}.  \]
	Then we claim that
	\begin{align}
		\label{Isoperimetric set up}
		\mathbb{P}(X_t\in r^nA )\leq \mathbb{P}\{ \omega\in\Omega\setminus (V'+r\sqrt{2\lambda} B(\mathcal{H})) \}.
	\end{align}
	To see this, observe that when $\xi\in (V'+r\sqrt{2\lambda} B(\mathcal{H}))$, we can find $\tau\in V'$ and $h\in B(\mathcal{H})$ so that
	\begin{align*}
		\sup_t\abs{X_t(\xi)-r^n(2\lambda)^{\frac{n}{2}}\langle f_t, h^{\otimes n} \rangle} =\sup_t\abs{X_t(\tau+r\sqrt{2\lambda} h)-r^n(2\lambda)^{\frac{n}{2}}\langle f_t, h^{\otimes n} \rangle}< r^n(2\lambda)^{\frac{n}{2}}\eta,
	\end{align*}
	which implies
	\[ \xi\in \left\{ X_t\in r^n\{x\in \mathcal{S}\mid d(x,K)<(2\lambda)^{\frac{n}{2}}\eta\}\right\} \subset \Omega\setminus\left\{X_t\in r^nA \right\}.  \]
	From previous estimate, we know that when $r$ is large enough $P(V')>1/2$. By Gaussian isoperimetric inequality (see, for instance, theorem 4.33 of \cite{bogachev1998gaussian}), we have from \eqref{Isoperimetric set up} that
	\begin{align*}
		\mathbb{P}(X_t\in r^nA )\leq \frac{1}{2}\exp\{ -r^2\lambda\}.
	\end{align*}
	We can thus deduce
	\begin{equation*}
		\limsup_{r\rightarrow \infty}\frac{1}{r^2}\log\mathbb{P}(X_t\in r^nA )\leq-\lambda.
	\end{equation*}
	Finally, note that $\lambda<I(A)$ is arbitrary. Passing to the limit gives
	\begin{equation*}
		\limsup_{r\rightarrow \infty}\frac{1}{r^2}\log\mathbb{P}(X_t\in r^nA )\leq-I(A),
	\end{equation*}
	which is the desired upper bound.
\end{proof}

Having established a large deviation principle for $X_t$, we are going to use an extended contraction argument to prove a similar result for the rough lift $\boldsymbol{X}_t$ of $X_t$. For convenience, we shall write $\Phi_l$ for the piece-wise linear approximations along a sequence of increasing partitions $\{Q^l\}_{l\geq 1}$. 
\begin{lemma}
	\label{Exponential goodness}
	Suppose assumption \ref{Assumption 1} holds for the components of $X_t$. Let $\delta>0$ fixed. Then for any $p>2\rho$, we have
	\begin{equation*}
		\lim_{l\rightarrow \infty}\overline{\lim_{\epsilon\rightarrow 0}}\epsilon^2\log \mathbb{P}\bigg( d_{p-var}\left(S_2\circ \Phi_l (X_t(\epsilon x)), \boldsymbol{X}(\epsilon x)\right)>\delta \bigg)=-\infty.
	\end{equation*}
\end{lemma}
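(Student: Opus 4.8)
The plan is to reduce the statement, via the scaling properties of Wiener chaos and the homogeneity of the rough-path dilation, to a single stretched-exponential tail estimate for the error $d\big(S_2\circ\Phi_l(X),\boldsymbol{X}\big)$, and then to obtain that estimate by rerunning the proof of Theorem~\ref{Piece-wise linear rough lift} while keeping track of how the relevant moments depend on their order.

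Since $p>2\rho$, fix $\rho'\in(\rho,3/2)$ with $2\rho'<p$; as $q$-variation is non-increasing in $q$, it suffices to prove the statement with $d_{p-var}$ replaced by $d_{2\rho'-var}$. As in the proof of Proposition~\ref{LDP for level 1}, the family $X_t(\epsilon x)$ coincides, up to contributions that are negligible at speed $\epsilon^{-2}$ (controlled by the Malliavin--Stroock expansion exactly as in that proof), with the process $\epsilon^n X_t$. Both $S_2$ and $\Phi_l$ commute with the dilation $\Lambda_c\colon(Y,\mathbb{Y})\mapsto(cY,c^2\mathbb{Y})$, the natural lift commutes with it as well, and $d_{2\rho'-var}$ is homogeneous of degree one under $\Lambda_c$; hence $S_2\circ\Phi_l(X_t(\epsilon x))=\Lambda_{\epsilon^n}\big(S_2\circ\Phi_l(X)\big)$ and $\boldsymbol{X}(\epsilon x)=\Lambda_{\epsilon^n}\boldsymbol{X}$, and we are reduced to bounding
\[
\mathbb{P}\Big(d_{2\rho'-var}\big(S_2\circ\Phi_l(X),\boldsymbol{X}\big)>\delta\,\epsilon^{-n}\Big).
\]

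The key point is to produce a constant $C>0$ and numbers $\xi_l\downarrow0$ such that
\[
\mathbb{P}\Big(d_{2\rho'-var}\big(S_2\circ\Phi_l(X),\boldsymbol{X}\big)>R\Big)\le\exp\big(-C\,(R/\xi_l)^{2/n}\big),\qquad R>0.
\]
I would obtain this by revisiting the estimates \eqref{level 1}--\eqref{level 2} of Theorem~\ref{Piece-wise linear rough lift} with the limit $X$ in place of $X^m$ (which only replaces the error factor $\norm{R_{X^l-X^m}}_{\infty}$ by $\norm{R_{X^l-X}}_{\infty}$, still tending to $0$ by continuity of $t\mapsto f_t$), but now recording, through moment equivalence (Proposition~\ref{Moment equivalence}), the growth in $q$ of the $L^q$-norms. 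The first-level increment $X^{l;i}_{s,t}-X^i_{s,t}$ lies in the $n$-th chaos, so the $L^2$-bound of Theorem~\ref{Piece-wise linear rough lift} together with Proposition~\ref{Moment equivalence} gives $\norm{X^{l;i}_{s,t}-X^i_{s,t}}_{L^q}\preceq q^{n/2}\,\xi_l\,\abs{t-s}^{1/(2\rho')}$, where $\xi_l$ is a fixed power of $\norm{R_{X^l-X}}_{\infty;[0,1]^2}$. The second-level increment $\mathbb{X}^{l;i,j}_{s,t}-\mathbb{X}^{i,j}_{s,t}$ lies in the sum of the first $2n$ chaoses (in the diagonal case $i=j$ one uses instead $\mathbb{X}^{l;i,i}_{s,t}=\tfrac12(X^{l;i}_{s,t})^2$ together with Cauchy--Schwarz and Proposition~\ref{Moment equivalence}), so $\norm{\mathbb{X}^{l;i,j}_{s,t}-\mathbb{X}^{i,j}_{s,t}}_{L^q}\preceq q^{n}\,\xi_l\,\abs{t-s}^{1/\rho'}$. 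Feeding these $q$-explicit moment bounds into the Garsia--Rodemich--Rumsey / Kolmogorov-type estimate for rough paths (theorems A.12--A.13 of \cite{friz2010multidimensional}), and using $d_{2\rho'-var}(\boldsymbol{Y},\boldsymbol{Z})=\norm{Y-Z}_{2\rho'-var}+\norm{\mathbb{Y}-\mathbb{Z}}_{\rho'-var}^{1/2}$ — the square root on the second level halving the effective chaos order back to $n$ — yields $\norm{d_{2\rho'-var}(S_2\circ\Phi_l(X),\boldsymbol{X})}_{L^q}\preceq q^{n/2}\,\xi_l$ for all $q\ge1$ (after absorbing a smaller power of $\norm{R_{X^l-X}}_{\infty}$ into $\xi_l$); Markov's inequality and optimization over $q$ then give the displayed tail bound.

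Combining the two steps, for every fixed $l$ and all sufficiently small $\epsilon$,
\[
\epsilon^2\log\mathbb{P}\Big(d_{p-var}\big(S_2\circ\Phi_l(X_t(\epsilon x)),\boldsymbol{X}(\epsilon x)\big)>\delta\Big)\le-C\,\delta^{2/n}\,\xi_l^{-2/n},
\]
so $\overline{\lim}_{\epsilon\to0}$ of the left-hand side is $\le-C\delta^{2/n}\xi_l^{-2/n}$, and letting $l\to\infty$ (so $\xi_l\to0$) this tends to $-\infty$. The main obstacle is the second step: one must rerun the proof of Theorem~\ref{Piece-wise linear rough lift} with the $q$-dependence of all moments made explicit — this is precisely what upgrades a polynomial Chebyshev bound to a stretched-exponential tail — while handling the rough-path metric carefully (Chen's relation for the two-parameter second-level increments, the square root appearing in $d_{2\rho'-var}$, and the diagonal case where the iterated integral is $\tfrac12(X_{s,t})^2$ rather than a genuine element of the $2n$-th chaos). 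The rescaling in the first step is routine apart from the lower-order-chaos point already dispatched in the proof of Proposition~\ref{LDP for level 1}.
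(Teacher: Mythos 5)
Your proposal is correct and follows essentially the same route as the paper: reduce by chaos homogeneity to a tail bound for $d_{p-var}\left(S_2\circ \Phi_l (X), \boldsymbol{X}\right)$ at level $\delta\epsilon^{-n}$, then exploit the hypercontractive growth $\norm{\cdot}_{L^q}\preceq q^{n/2}\norm{\cdot}_{L^2}$ together with the $L^2$-convergence from Theorem \ref{Piece-wise linear rough lift} to beat the speed $\epsilon^{-2}$. The only cosmetic difference is that you first package the Chebyshev-plus-optimization step as an explicit stretched-exponential (Weibull) tail in $R$ before substituting $R=\delta\epsilon^{-n}$, whereas the paper applies Markov's inequality with the $q$-th moment directly and chooses $q=1/\epsilon^2$; these are equivalent.
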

\begin{proof}
	By homogeneity, we have
	\[d_{p-var}\left(S_2\circ \Phi_l (X_t(\epsilon x)), \boldsymbol{X}(\epsilon x)\right)=\epsilon^n d_{p-var}\left(S_2\circ \Phi_l (X_t( x)), \boldsymbol{X}(x)\right). \]
	For any $q>1$, the moment equivalence of Wiener chaos gives
	\[ \mathbb{E}\abs{X_t}^q\leq (q-1)^{\frac{n}{2}}\mathbb{E}\abs{X_t}^2. \]
	Thus,	
	\begin{align*}
		\mathbb{P}\left\{d_{p-var}\left(S_2\circ \Phi_l (X_t( x)), \boldsymbol{X}(x)\right)>\frac{\delta}{\epsilon^n}  \right\}&\leq  \frac{ \epsilon^{nq}}{\delta^q}\mathbb{E}\bigg( d_{p-var}\left(S_2\circ \Phi_l (X_t( x)), \boldsymbol{X}(x)\right)  \bigg)^q\\
		\leq\frac{ \epsilon^{nq}}{\delta^q}&(q-1)^{\frac{nq}{2}}\mathbb{E}\bigg( d_{p-var}\left(S_2\circ \Phi_l (X_t( x)), \boldsymbol{X}(x)\right)  \bigg)^2.
	\end{align*}
	To ease notations, we will write
	\[ d_p(l):=\mathbb{E}\bigg( d_{p-var}\left(S_2\circ \Phi_l (X_t( x)), \boldsymbol{X}(x)\right)  \bigg)^2.  \]
Taking logarithm leads to
\begin{align*}
	\log 	\mathbb{P}&\left\{d_{p-var}\left(S_2\circ \Phi_l (X_t( x)), \boldsymbol{X}(x)\right)>\frac{\delta}{\epsilon^n}  \right\}\\
	&\leq q\log\left(\frac{\epsilon^n(q-1)^{\frac{n}{2}}}{\delta} d_p(l) \right).
\end{align*}
Choose $q=1/\epsilon^2$, we see that when $\epsilon$ is small enough
\begin{align*}
	\epsilon^2\log 	\mathbb{P}&\left\{d_{p-var}\left(S_2\circ \Phi_l (X_t( x)), \boldsymbol{X}(x)\right)>\frac{\delta}{\epsilon^n}  \right\}\\
	&\leq \log\left(\epsilon^n(\frac{1}{\epsilon^2}-1)^{\frac{n}{2}}\frac{d_p(l)}{\delta} \right)\leq \log\left(2\frac{d_p(l)}{\delta} \right).
\end{align*}
By theorem \ref{Piece-wise linear rough lift}, $\lim_{l\rightarrow \infty}d_p(l)=0$. Our result then follows.
\end{proof}

\begin{lemma}
	\label{Continuity property on level sets}
	Suppose assumption \ref{Assumption 1} holds for the components of $X_t$. For all $M>0$, and $p>2\rho$, we have
	\begin{equation}
		\label{Uniform on bounded sets}
		\lim_{l\rightarrow \infty}\sup_{\{x: I(x)<M\}}d_{p-var}\left(S_2\circ \Phi_l (x), S_2(x)    \right)=0.
	\end{equation}
\end{lemma}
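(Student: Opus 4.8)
The plan is to strip the statement of all probability: I would reduce \eqref{Uniform on bounded sets} to a deterministic assertion that the natural lift $S_2$ and its piecewise linear approximants $S_2\circ\Phi_l$ are uniformly close on any family of $\R^d$-valued paths that share a \emph{single} $\rho$-variation control. The point is that a sublevel set $\{I(x)<M\}$ of the rate function consists of deterministic Cameron--Martin-type paths $x^i_t=\langle f_t,(h^i)^{\otimes n}\rangle$, and — thanks to the Hilbert-space structure and Assumption \ref{Assumption 1} — they are all dominated by one and the same control.

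\textbf{Step 1 (reduction to a common control).} First I would note that $I(x)<M$ forces $x^i_t=\langle f_t,(h^i)^{\otimes n}\rangle_{\mathcal H^{\otimes n}}$ for some $h=(h^1,\dots,h^d)\in\mathcal H^d$ with $\norm{h}^2_{\mathcal H^d}<2M$, whence $\norm{(h^i)^{\otimes n}}_{\mathcal H^{\otimes n}}=\norm{h^i}_{\mathcal H}^n\le(2M)^{n/2}$. Running the duality estimate from the proof of Lemma \ref{Embedding} on an arbitrary subinterval $[s,t]$ (it only uses the isometry $R([s,t]\times[u,v])=n!\langle f_{s,t},f_{u,v}\rangle_{\mathcal H^{\otimes n}}$) and then invoking Assumption \ref{Assumption 1} gives
\[ \norm{x^i}^{\rho}_{\rho-\mathrm{var};[s,t]}\preceq\norm{(h^i)^{\otimes n}}^{\rho}_{\mathcal H^{\otimes n}}\,\norm R^{\rho/2}_{\rho-\mathrm{var};[s,t]^2}\le C_M\,\abs{t-s}^{1/2}=:\omega(s,t), \]
with $C_M$ depending only on $M$, $n$, $\norm R_{\rho-\mathrm{var};[0,1]^2}$, and $\omega$ a continuous, super-additive control vanishing on the diagonal, the same for every $x$ in the sublevel set. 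Hence it suffices to prove: for any control $\omega$ and any $p>2\rho$, $\sup_{x\in\mathcal K_\omega}d_{p-\mathrm{var};[0,1]}(S_2\Phi_l(x),S_2(x))\to0$, where $\mathcal K_\omega:=\{x\in C([0,1],\R^d):\norm x^\rho_{\rho-\mathrm{var};[s,t]}\le\omega(s,t)\ \forall\,[s,t]\}$.

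\textbf{Step 2 (the deterministic lemma).} Write $x^l=\Phi_l(x)$. The one-dimensional version of Lemma \ref{Piece-wise p-var} (classical, cf. \cite{friz2010multidimensional}) gives $\norm{x^l}^\rho_{\rho-\mathrm{var};[s,t]}\preceq\omega(s,t)$ uniformly in $l$ and $x\in\mathcal K_\omega$, and since $x^l$ is linear on each cell $[t^l_i,t^l_{i+1}]$ of $Q^l$ one gets $\norm{x^l-x}_{\infty;[0,1]}\preceq\delta_l:=\max_i\omega(t^l_i,t^l_{i+1})^{1/\rho}$, which tends to $0$ by uniform continuity of $\omega$. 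Fixing $\rho'\in(\rho,2)$ and interpolating the sup-norm decay against the uniform $\rho$-variation bound yields $\norm{x^l-x}^{\rho'}_{\rho'-\mathrm{var};[s,t]}\preceq\delta_l^{\rho'-\rho}\,\omega(s,t)$, so in particular $\norm{x^l-x}_{p-\mathrm{var};[0,1]}\preceq\delta_l^{1-\rho/\rho'}\to0$ uniformly. For the second level I would write $\mathbb X^l_{s,t}-\mathbb X_{s,t}=\int_s^t(x^l-x)_{s,r}\otimes dx^l_r+\int_s^t x_{s,r}\otimes d(x^l-x)_r$ (Young integrals, legitimate since $\rho'<2$) and apply the Young--Lo\`eve estimate:
\[ \norm{\mathbb X^l_{s,t}-\mathbb X_{s,t}}\preceq\norm{x^l-x}_{\rho'-\mathrm{var};[s,t]}\big(\norm{x^l}_{\rho'-\mathrm{var};[s,t]}+\norm x_{\rho'-\mathrm{var};[s,t]}\big)\preceq\delta_l^{(\rho'-\rho)/\rho'}\,\omega(s,t)^{1/\rho'+1/\rho}. \]
Since $p>2\rho$ gives $\tfrac p2(\tfrac1\rho+\tfrac1{\rho'})>1$, raising to the power $p/2$, summing over a partition and using super-additivity of $\omega$ yields $\norm{\mathbb X^l-\mathbb X}_{(p/2)-\mathrm{var};[0,1]}\preceq\delta_l^{(\rho'-\rho)/\rho'}\to0$ uniformly in $x\in\mathcal K_\omega$. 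Together with the level-one bound, $d_{p-\mathrm{var};[0,1]}(S_2\Phi_l(x),S_2(x))\to0$ uniformly over $\mathcal K_\omega$, which is \eqref{Uniform on bounded sets}.

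\textbf{Main difficulty.} The only step that is not bookkeeping is Step 1: one must know that an \emph{entire} sublevel set of $I$, not just each of its members, is controlled by a single $\rho$-variation modulus — this is exactly where compactness of the unit ball of $\mathcal H^d$ (through the Hilbert-space duality behind Lemma \ref{Embedding}) and the H\"older-controlled Assumption \ref{Assumption 1} enter, and it is what promotes pointwise continuity of $x\mapsto S_2(x)$ to the uniform statement \eqref{Uniform on bounded sets}. Everything after that is a routine interpolation argument for the Young integral; the one calculational point to watch is checking that the exponents $\tfrac p2(\tfrac1\rho+\tfrac1{\rho'})$ exceed $1$, which is what pins down the threshold $p>2\rho$.
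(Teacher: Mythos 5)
Your proposal is correct and follows essentially the same route as the paper: a uniform $\rho$-variation bound on the sublevel set coming from Lemma \ref{Embedding} together with $\norm{h^i}_{\mathcal H}\le\sqrt{2M}$, uniform sup-norm convergence of the piecewise-linear approximations, and then interpolation to upgrade to $p$-variation closeness of the lifts (the paper compresses your Step 2 into a citation of uniform continuity of $S_2$ on bounded sets, which you instead verify by hand via Young--Lo\`eve). One bookkeeping slip worth fixing: $C_M\abs{t-s}^{1/2}$ is \emph{sub}additive, not superadditive, so it is not a control; the clean way is to take $\omega(s,t)=C_M\norm{R}^{\rho}_{\rho-var;[s,t]^2}\le C_M\abs{t-s}$ and record $\norm{x}^{2\rho}_{\rho-var;[s,t]}\preceq\omega(s,t)$, after which the summability condition becomes $\frac{p}{4}\left(\frac1\rho+\frac1{\rho'}\right)\ge 1$ rather than your stated $\frac{p}{2}\left(\frac1\rho+\frac1{\rho'}\right)>1$ --- this still pins down exactly the threshold $p>2\rho$ upon choosing $\rho'$ close to $\rho$, so nothing breaks. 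Also, no compactness of $B(\mathcal H^d)$ is needed or used; only the norm bound on $h$ enters.
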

\begin{proof}
By lemma \ref{Embedding} each $x\in \mathcal{S}$ with $I(x)<+\infty$ has finite $\rho$-variation. We have 
\begin{align*}
	\sup_{\{x: I(x)<M\}}\norm{x}_{p-var;[0,1]}&\preceq \sup_{\{x: I(x)<M\}}\norm{x}_{\rho-var;[0,1]}\\
	&\preceq \norm{h}^n \sqrt{\norm{R}_{\rho-var;[0,1]}}\preceq M^{\frac{n}{2}}.
\end{align*}
	The iterated integrals of $x$ are simply given by Young's integral. Hence we have
	\begin{align*}
		\sup_{\{x: I(x)<M\}}\norm{S_2(x)}_{p-var;[0,1]}\preceq M^{\frac{n}{2}}.
	\end{align*}
	By the uniform continuity of the map $S_2$ on bounded sets, it suffices to show that
	\[\sup_{\{x: I(x)<M\}}\sup_{t\in [0,1]}\abs{x_t-\Phi^l(x)_t}\rightarrow 0,  \]
	and \eqref{Uniform on bounded sets} follows from interpolation. It is easy to see that
	\begin{align*}
		\sup_{\{x: I(x)<M\}}\sup_{t\in [0,1]}\abs{x_t-\Phi^l(x)_t}\leq \sup_{\{x: I(x)<M\}}\max_{t_i \in Q^l} \norm{x }_{\rho-var;[t^l_i,t^l_{i+1}]}\leq (2M)^{\frac{n}{2}}\max_{t_i \in Q^l} R([t^l_i , t^l_{i+1}]).
	\end{align*}
	From which we deduce that 
	\[ \lim_{l\rightarrow \infty}\sup_{\{x: I(x)<M\}}\sup_{t\in [0,1]}\abs{x_t-\Phi^l(x)_t}=0.\]
	The proof is complete.
\end{proof}

\begin{theorem}
	Suppose assumption \ref{Assumption 1} holds for the components of $X_t$. The family of random processes $\{\boldsymbol{X}_t(\epsilon\omega)\}_{\epsilon>0}$ satisfies a large deviation principle on the space of geometric rough paths $G\mathscr{C}^{p-var}([0,1], \mathbb{R}^d)$ with good rate function
	\begin{equation*}
		I(\boldsymbol{x})=\begin{cases*}
			\inf\left\{\frac{1}{2}\norm{h}^2_{\mathcal{H}^d}\mid\ h\in\mathcal{H}^d,\ \langle f_t, h_i^{\otimes n} \rangle=\pi_1(\boldsymbol{x})^i,\ 1\leq i\leq d \right\} ,\\
			+\infty\;\; \text{if no such $h$ exists.}
		\end{cases*}
	\end{equation*}
	Here $\pi_1$ is the projection map defined as $\pi_1(\boldsymbol{Y})=Y$ for $\boldsymbol{Y}=(Y, \mathbb{Y})\in G\mathscr{C}^{p-var}([0,1], \mathbb{R}^d)$.
\end{theorem}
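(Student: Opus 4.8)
The plan is to deduce the rough path large deviation principle from the one already obtained for $X_t(\epsilon\omega)$ in Proposition \ref{LDP for level 1}, via an extended (inverse) contraction argument of the type used for Gaussian rough paths in \cite{friz2010multidimensional}. All three ingredients are in hand: the LDP for the level-one process on $\mathcal{S}=C([0,1],\mathbb{R}^d)$, the exponential goodness of the piece-wise linear lifts (Lemma \ref{Exponential goodness}), and the uniform convergence $S_2\circ\Phi_l\to S_2$ on level sets of $I$ (Lemma \ref{Continuity property on level sets}). Fix $p\in(2\rho,3)$ throughout.

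First I would observe that, for each fixed $l\geq 1$, the map $S_2\circ\Phi_l:\mathcal{S}\to G\mathscr{C}^{p-var}([0,1],\mathbb{R}^d)$ is continuous: $\Phi_l$ depends continuously (indeed linearly) on the finitely many values $\{x_{t^l_i}\}$, and on the resulting finite-dimensional family of piece-wise linear paths the Young lift $S_2$ is continuous in $p$-variation. Hence by the contraction principle, $\{S_2\circ\Phi_l(X_t(\epsilon\omega))\}_{\epsilon>0}$ satisfies a large deviation principle with good rate function $I_l(\boldsymbol{x})=\inf\{I(y)\mid S_2\circ\Phi_l(y)=\boldsymbol{x}\}$.

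Next I would apply the extended contraction principle (see, e.g., \cite{friz2010multidimensional}): by Lemma \ref{Exponential goodness} the family $S_2\circ\Phi_l(X_t(\epsilon\omega))$ is an exponentially good approximation of $\boldsymbol{X}_t(\epsilon\omega)$ as $l\to\infty$, and by Lemma \ref{Continuity property on level sets} one has $\lim_{l\to\infty}\sup_{\{I(x)\leq M\}}d_{p-var}(S_2\circ\Phi_l(x),S_2(x))=0$ for every $M>0$. These are precisely the hypotheses needed to transport the LDP, so $\{\boldsymbol{X}_t(\epsilon\omega)\}_{\epsilon>0}$ satisfies a large deviation principle on $G\mathscr{C}^{p-var}([0,1],\mathbb{R}^d)$ with rate function
\[
\tilde I(\boldsymbol{x})=\sup_{\delta>0}\ \liminf_{l\to\infty}\ \inf\Big\{I(y)\ \Big|\ d_{p-var}\big(S_2\circ\Phi_l(y),\boldsymbol{x}\big)<\delta\Big\}.
\]

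Finally I would identify $\tilde I$ with the announced rate function. If $\pi_1(\boldsymbol{x})^i=\langle f_t,h_i^{\otimes n}\rangle$ for some $h\in\mathcal{H}^d$, then $\pi_1(\boldsymbol{x})$ has finite $\rho$-variation with $\rho<3/2<2$ by Lemma \ref{Embedding}, so its iterated integral is the Young integral and $\boldsymbol{x}=S_2(\pi_1(\boldsymbol{x}))$ is the unique geometric lift above $\pi_1(\boldsymbol{x})$; taking $y=\pi_1(\boldsymbol{x})$ and invoking Lemma \ref{Continuity property on level sets} yields $\tilde I(\boldsymbol{x})\leq \tfrac12\|h\|^2_{\mathcal{H}^d}$ after optimizing over $h$, i.e. $\tilde I(\boldsymbol{x})\leq I(\pi_1(\boldsymbol{x}))$. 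The reverse inequality, together with the fact that $\tilde I(\boldsymbol{x})=+\infty$ whenever $\pi_1(\boldsymbol{x})$ is not of this form, follows by pushing the LDP for $\boldsymbol{X}_t(\epsilon\omega)$ forward by the continuous map $\pi_1:G\mathscr{C}^{p-var}\to\mathcal{S}$ (which returns the first level) and comparing with Proposition \ref{LDP for level 1}. Goodness of $\tilde I$ comes from the fact that $\{\tilde I\leq M\}$ is a closed subset of $S_2(\{I\leq M\})$, and the latter is compact because $\{I\leq M\}$ is compact in $\mathcal{S}$ with uniformly bounded $\rho$-variation, so that $S_2$ is continuous there by interpolation. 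The main obstacle is this last identification step: one must argue carefully that above a finite-energy level-one path there is exactly one geometric rough path — which forces the Young lift — and keep track of the topology so that compactness of level sets and continuity of $S_2$ are preserved when passing from the uniform to the $p$-variation topology.
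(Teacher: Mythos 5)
Your proposal follows exactly the paper's route: the paper likewise deduces the theorem from Proposition \ref{LDP for level 1} via the extended contraction principle (Theorem 4.2.23 of Dembo--Zeitouni), with Lemma \ref{Exponential goodness} supplying the exponential goodness of the piecewise-linear lifts and Lemma \ref{Continuity property on level sets} supplying the uniform convergence $S_2\circ\Phi_l\to S_2$ on level sets of $I$; the paper stops there and does not spell out the rate-function identification, which you carry out explicitly.

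One caveat on your final identification step: it is \emph{not} true that a finite-variation (or finite $\rho$-variation) path admits a unique geometric $p$-rough lift --- one may perturb the second level by the increments of any antisymmetric-matrix-valued path of finite $p/2$-variation (the ``pure area'' anomaly), and such perturbed lifts are still limits of lifts of smooth paths. Consequently the rate function delivered by the extended contraction principle is $\tilde I(\boldsymbol{x})=\inf\{I(y): S_2(y)=\boldsymbol{x}\}$, which equals $I(\pi_1(\boldsymbol{x}))$ only when $\boldsymbol{x}$ lies in the image of $S_2$, and is $+\infty$ otherwise --- including for rough paths whose first level is of Cameron--Martin type but whose area is not the Young area. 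This is an imprecision in the announced form of the rate function (shared by the paper's own statement, which conditions only on $\pi_1(\boldsymbol{x})$) rather than a defect in your argument; the rest of your proof, including the compactness of level sets via $S_2(\{I\leq M\})$, is sound and matches the intended reading.
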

\begin{proof}
	With proposition \ref{LDP for level 1} at hand, by theorem 4.2.23 of \cite{dembo2009large} we only need to show the exponential goodness of piece-wise  approximations and a
	(uniform) continuity property on level sets of the good rate function. But these
	properties are the exact contents of lemma \ref{Exponential goodness} and lemma \ref{Continuity property on level sets}.  
\end{proof}

\section{Application to rough differential equations}
The rough lifts we constructed in section 3 allow us to study RDEs driven by $X_t$. 
In this section, let $\{V_i\}_{1\leq i\leq d}\subset C^\infty_b(\mathbb{R}^d)$; smooth functions bounded together with all derivatives. We consider
\begin{equation}
	\label{RDE}
	Y_t=y_0+\sum_{i=1}^{d}\int_{0}^{t}V_i(Y_s)d\boldsymbol{X}^i_s,\ y_0\in\mathbb{R}^d,\ t\in[0,1].
\end{equation}
All our results can be easily generalized to include a drift.
\subsection{Malliavin derivatives of $Y_t$}\hfill\\
 Let us digress a little and recall the Fa\`a di Bruno's formula for high order derivatives of composite functions, which will be helpful later.
 \begin{lemma}
 	\label{Faa di Bruno}
 	Let $f,g\in C^\infty(\mathbb{R})$, then for any $k\geq 1$
 	\[  \frac{d^k}{dx^k}f(g(x))=\sum_{\pi\in \Pi_{k} } f^{(\abs{\pi})}(g(x))\prod_{\alpha\in\pi}g^{(\abs{\alpha})}(x).  \]
 	Here $\Pi_{k}$ is the set of all partitions of the set $\{1,2, \cdots, k \}$, $\alpha\in\pi$ means $\alpha$ runs through all the blocks of the partition $\pi$ and $\abs{\pi}$ denotes the cardinality of $\pi$.
 \end{lemma}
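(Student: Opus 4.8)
The plan is to argue by induction on $k$. For the base case $k=1$, the chain rule gives $\frac{d}{dx}f(g(x)) = f'(g(x))g'(x)$; since $\{1\}$ has the unique partition $\pi=\{\{1\}\}$ with $|\pi|=1$ and whose single block $\alpha$ has $|\alpha|=1$, the claimed right-hand side is exactly $f^{(1)}(g(x))g^{(1)}(x)$, so the two sides agree.

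For the inductive step, assume the identity for some $k\geq 1$ and differentiate both sides in $x$. Applying the product rule to a typical summand $f^{(|\pi|)}(g(x))\prod_{\alpha\in\pi}g^{(|\alpha|)}(x)$, I would organize the resulting terms into two families: differentiating the outer factor produces, via the chain rule, $f^{(|\pi|+1)}(g(x))\,g^{(1)}(x)\prod_{\alpha\in\pi}g^{(|\alpha|)}(x)$, while differentiating one of the inner factors produces, for each block $\alpha\in\pi$, the term $f^{(|\pi|)}(g(x))\,g^{(|\alpha|+1)}(x)\prod_{\beta\in\pi,\,\beta\neq\alpha}g^{(|\beta|)}(x)$. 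The key observation is that these terms are naturally indexed by $\Pi_{k+1}$: adjoining $\{k+1\}$ as a new singleton block gives the first family, whereas inserting $k+1$ into an existing block $\alpha$ gives the second, and conversely every $\pi'\in\Pi_{k+1}$ arises in exactly one of these ways — recover $\pi\in\Pi_k$ by deleting $k+1$ from its block (discarding that block if it becomes empty) and remembering whether the block was a singleton. Matching the factors under this correspondence, the term attached to $\pi'$ is precisely $f^{(|\pi'|)}(g(x))\prod_{\alpha'\in\pi'}g^{(|\alpha'|)}(x)$, and summing over all $\pi'\in\Pi_{k+1}$ closes the induction.

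The only step needing a little care is verifying that this insertion/deletion rule is a genuine bijection between $\Pi_{k+1}$ and the disjoint union, over $\pi\in\Pi_k$, of the single choice ``adjoin $\{k+1\}$'' together with the $|\pi|$ choices ``insert $k+1$ into a block of $\pi$'', and that this lines up with the product-rule expansion term by term. This is pure bookkeeping with set partitions rather than a real obstacle; no analytic input beyond the chain and product rules enters. Alternatively, one could bypass the explicit bijection by checking that the right-hand side obeys the same recursion in $k$ as $\frac{d^k}{dx^k}f(g(x))$ — differentiation sends the $\pi$-summand to precisely the $\pi'$-summands listed above — which is the same combinatorial fact phrased differently.
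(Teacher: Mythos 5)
Your induction is correct: the base case is the chain rule, and the bijection between $\Pi_{k+1}$ and the choices ``adjoin $\{k+1\}$ as a new singleton block'' or ``insert $k+1$ into an existing block of $\pi\in\Pi_k$'' exactly matches the two families of terms produced by the product rule, so the inductive step closes. The paper states this classical Fa\`a di Bruno formula as a lemma without proof (offering only an illustrative example for $k=4$), so there is no argument to compare against; yours is the standard combinatorial proof of the set-partition form and fills that gap cleanly.
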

	Take $f^{(4)}(g(x))$ as an example. We can take $\pi=\{ \{1,3\},\ \{2,4\} \}$, which has $\abs{\pi}=2$. For this partition $\pi$, it has two blocks $\{1,2\}$ and $\{3,4 \}$. Hence
	\[ \prod_{\alpha\in\pi}g^{(\abs{\alpha})}(x)=g^{(\abs{\{1,3\}})}(x)g^{(\abs{\{2,4\}})}(x)=(g''(x))^2.  \]
	Multiplying these two terms gives $f''(g(x))(g''(x))^2$, which is the term in $f^{(4)}(g(x))$ corresponding to the partition $\pi=\{ \{1,3\},\ \{2,4\} \}$.

Coming back to the RDE \eqref{RDE}, we make an important observation that the Malliavin derivatives $D^kY_t$ can be regarded as solutions to RDEs driven by $\boldsymbol{\hat{X}}_t$ along linear vector fields. This is the content of the next
\begin{proposition}
	Suppose that assumption \ref{Assumption 2} holds for the components of $X_t$. Let $Y_t$ be given by \eqref{RDE}. Then, for $k\geq 1$,
	\begin{align}
		D^kY_t&=\sum_{i=1}^{d}\int_{0}^{t}DV_i(Y_s)D^kY_sd\boldsymbol{X}^i_s+\sum_{i=1}^{d}\int_{0}^{t}\sum_{\pi\in \Pi_{k}\setminus e}D^{(\abs{\pi})}V_i(Y_s) \left(\hat{\otimes}_{\alpha\in\pi}D^{(\abs{\alpha})}Y_s \right)d\boldsymbol{X}^i_s\label{D^kY_t}\\
		&+\sum_{i=1}^{d}\sum_{r=1}^{k\wedge n}\binom{n}{r}\int_{0}^{t}\sum_{\pi\in \Pi_{k-r}}D^{(\abs{\pi})}V_i(Y_s) \left(\hat{\otimes}_{\alpha\in\pi}D^{(\abs{\alpha})}Y_s \right)\hat{\otimes}d\boldsymbol{D^rX}^i_s,\nonumber
	\end{align}
	where $e$ in the second term on the right-hand side is the trivial partition of $\{1, 2, \cdots, k \}$.
\end{proposition}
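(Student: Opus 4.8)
The plan is to derive \eqref{D^kY_t} by differentiating the rough differential equation \eqref{RDE} $k$ times in the Malliavin sense and organizing the resulting terms with the Fa\`a di Bruno formula (Lemma \ref{Faa di Bruno}) together with the chaos identity $D^{r}X_t=D^{r}I_n(f_t)=\tfrac{n!}{(n-r)!}I_{n-r}(f_t)$, which in particular makes $D^{n+1}X_t$ vanish and is the reason the last line of \eqref{D^kY_t} truncates at $r\le k\wedge n$. The analytic heart of the argument is a single Leibniz-type identity: for a sufficiently regular controlled integrand $Z$ and $0\le r\le n$,
\[D\Bigl(\int_0^t Z_s\,\hat{\otimes}\,d\boldsymbol{D^{r}X}^i_s\Bigr)=\int_0^t DZ_s\,\hat{\otimes}\,d\boldsymbol{D^{r}X}^i_s+c_{n,r}\int_0^t Z_s\,\hat{\otimes}\,d\boldsymbol{D^{r+1}X}^i_s,\]
where $c_{n,r}$ is the constant coming from $D\,D^{r}I_n=D^{r+1}I_n$, where $\boldsymbol{D^{n+1}X}^i\equiv0$, and where the integrals are rough integrals against the appropriate projections of $\boldsymbol{\hat{X}}$ (well defined under Assumption \ref{Assumption 2} because Theorem \ref{Enhance piece-wise linear rough lift} provides the joint lift $\boldsymbol{\hat{X}}$ of $(X,DX,\dots,D^{n}X)$); once this is in hand the rest is bookkeeping.

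To prove the identity I would pass to the piece-wise linear approximations $\Phi_l$ of Section 3, for which every integral above is an ordinary Riemann--Stieltjes integral of a chaos-valued process, so the rule reduces to classical calculus and to moving $D$ inside a $t$-integral; I would then let $l\to\infty$, invoking the convergence of the lifts (Theorem \ref{Piece-wise linear rough lift} and Theorem \ref{Enhance piece-wise linear rough lift}), continuity of the It\^o--Lyons map, the moment bounds $\mathbb{E}\norm{\boldsymbol{\hat{X}}}_{2\rho'-var;[0,1]}^{p}<\infty$, the moment equivalence of Proposition \ref{Moment equivalence}, and closability of $D^{k}$ on $\mathbb{D}^{k,p}$ to identify the limit. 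I expect this interchange to be the main obstacle: one must produce uniform $L^p$ control of the relevant rough-path norms and of the Malliavin norms of $Y_t^l$ and its approximate derivatives, strong enough both to pass to the limit on the right-hand side and to apply closability, and one must verify that the limiting objects are genuinely rough integrals against the corresponding projections of $\boldsymbol{\hat{X}}$.

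Granting this, I would argue by induction on $k$. For $k=1$ one differentiates \eqref{RDE} and uses the chain rule $D(V_i(Y_s))=DV_i(Y_s)DY_s$ together with the identity above applied to $d\boldsymbol{X}^i_s$; this gives \eqref{D^kY_t} for $k=1$, the sum over $\Pi_{k}\setminus e$ with $k=1$ being empty and only the $r=1$ term of the last line surviving. For the inductive step one differentiates \eqref{D^kY_t} term by term. Differentiating a coefficient $D^{\abs{\pi}}V_i(Y_s)(\hat{\otimes}_{\alpha\in\pi}D^{\abs{\alpha}}Y_s)$ produces, via the product rule and Lemma \ref{Faa di Bruno}, exactly the contributions indexed by the partitions of $\{1,\dots,k+1\}$ obtained from $\pi$ by adjoining the new index to an existing block or as a new singleton --- precisely the recursion generating $\Pi_{k+1}$ from $\Pi_{k}$ --- with the case in which the new index is adjoined to the full block of the one-block partition regenerating the distinguished linear term $DV_i(Y_s)D^{k+1}Y_s$ attached to $e$. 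Differentiating a driver $d\boldsymbol{D^{r}X}^i_s$ (including $r=0$) advances it to $d\boldsymbol{D^{r+1}X}^i_s$ with the factor $c_{n,r}$; combining these factors with the binomial weights accumulated along the induction yields the coefficients in the last line of \eqref{D^kY_t}, the range $r\le(k+1)\wedge n$ being automatic from $\boldsymbol{D^{n+1}X}\equiv0$. Collecting everything reproduces \eqref{D^kY_t} with $k$ replaced by $k+1$.

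Finally I would record the structural point that makes the statement useful: read as an equation for the unknown $D^{k}Y_t$, the identity \eqref{D^kY_t} is a linear (affine) rough differential equation driven by $\boldsymbol{\hat{X}}$, since the coefficient of $d\boldsymbol{X}^i_s$ acting on $D^{k}Y_t$ is the linear map $z\mapsto DV_i(Y_s)z$ while every other term involves only $Y_s,DY_s,\dots,D^{k-1}Y_s$, known by induction, and the rough path $\boldsymbol{\hat{X}}$ itself. This is exactly the assertion of the proposition, and it is what the subsequent integrability analysis exploits.
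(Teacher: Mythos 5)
Your proposal is correct and follows essentially the same route as the paper: differentiate the piece-wise linear approximating ODE, pass to the limit using the joint lift $\boldsymbol{\hat{X}}$ from Theorem \ref{Enhance piece-wise linear rough lift}, iterate the Leibniz rule, and organize the composite-derivative terms with Fa\`a di Bruno's formula. You are in fact somewhat more explicit than the paper about the limit interchange and the induction on partitions, which the paper treats briskly, but the underlying argument is the same.
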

\begin{proof}
	Let $\{Q^l\}_{l\geq 1}$ be a sequence of increasing partitions of $[0,1]$. Consider
	\begin{equation}
		\label{Approximation RDE}
		Y^l_t=y_0+\sum_{i=1}^{d}\int_{0}^{t}V_i(Y^l_s)dX^{l;i}_s.
	\end{equation}
	We know from standard rough paths theory that $Y^l_t$ converges to $Y_t$ uniformly, almost surely. We thus infer that for any $h\in \mathcal{H}$,
	\begin{align*}
		\langle DY_t(\omega), h \rangle_{\mathcal{H}}&=\lim_{r\rightarrow 0}\frac{Y_t(\omega+rh)-Y_t(\omega)}{r}=\lim_{r\rightarrow 0}\lim_{l\rightarrow \infty}\frac{Y^l_t(\omega+rh)-Y^l_t(\omega)}{r}\\
		&=\lim_{l\rightarrow \infty} \langle DY^l_t(\omega), h \rangle_{\mathcal{H}}.
	\end{align*}
	As a result, $DY_t=\lim_{l\rightarrow \infty} DY^l_t$. Apply Malliavin derivative to both sides of \eqref{Approximation RDE}, we get
	\begin{align*}
		DY^l_t=\sum_{i=1}^{d}\int_{0}^{t}DV_i(Y^l_s)DY^l_sdX^{l;i}_s+\sum_{i=1}^{d}\int_{0}^{t}V_i(Y^l_s)dDX^{l;i}_s.
	\end{align*}
	Thanks to the lift above $\hat{X}_t$ that we constructed in theorem \ref{Enhance piece-wise linear rough lift}, the previous ODE, by definition, converges to
	\begin{align*}
		DY_t=\sum_{i=1}^{d}\int_{0}^{t}DV_i(Y_s)DY_sd\boldsymbol{X}^i_s+\sum_{i=1}^{d}\int_{0}^{t}V_i(Y_s)d\boldsymbol{DX}^i_s.
	\end{align*}
	This settles the case for $k=1$. 
	
	One can iterate this processes, to see
	\begin{align*}
		D^kY_t=\sum_{i=1}^{d}\int_{0}^{t}D^k\left(V_i(Y_s)d\boldsymbol{X}^i_s\right)=\sum_{i=1}^{d}\sum_{r=0}^{k\wedge n}\binom{n}{r}\int_{0}^{t}D^{k-r}V_i(Y_s)\hat{\otimes} d\boldsymbol{D^rX}^i_s.
	\end{align*}
	Finally, we apply Fa\`a di Bruno's formula (lemma \ref{Faa di Bruno}) for all the composite derivative terms $D^{k-r}V_i(Y_s)$ with the usual multiplication replaced by symmetry tensor product $\hat{\otimes}$. Equation \eqref{D^kY_t} then follows. The proof is complete.
\end{proof}

\begin{remark}
	We emphasis that we intentionally wrote \eqref{D^kY_t} in its current from (singling out the first term which is corresponding to the trivial partition $e$) so that $D^kY_t$ only shows up in the first term on the right-hand side. It then becomes apparent that $D^kY_t$ solves a RDE driven by $\boldsymbol{\hat{X}}_t$ with linear vector fields. The combinatorial nature of \eqref{D^kY_t} is not relevant to our purpose. 
\end{remark}
\begin{proposition}
	\label{Linear RDE estimate}
	Let $S, H$ be Banach spaces and $\boldsymbol{X}_t$ a rough path of $p$-variation on $H$ for some $p\geq 1$. Assume that $V=\{V_{i}\}_{1\leq i\leq d}$ is a collection of linear vector fields on $S$, defined by
	\begin{equation}
		\label{Linear vector field}
		V^i_{t}(z)=A^i_{t}z+B^i_{t},
	\end{equation}
	where $A^i_{t}\in L(S, L(H, S))$ and $B^i_{t}\in L(H, S)$ for all $1\leq i\leq d$. Let $Y_t$ be the solution to the following RDE
	\[ Y_t=y_0+\sum_{i=1}^{d}\int_{0}^{t}V^i_t(Y_s)d\boldsymbol{X}^i_s,\ y_0\in S. \]
	Then, for any $\alpha>0$, we can find a constant $C>0$ that only depends on $p,\alpha$, such that
\begin{align*}
	\norm{Y_t}_{\infty}\leq C\left(\abs{y_0}+\norm{B_t}_\infty\norm{\boldsymbol{X}_t}_{p-var;[0,1]}\right)\exp\left(C\norm{A_t}_\infty \max(1, \alpha^{-1} )(2N_\alpha(\boldsymbol{\hat{X}})+1)\right).
\end{align*}
\end{proposition}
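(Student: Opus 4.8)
The plan is to run the greedy-partition scheme for linear rough differential equations, in the spirit of \cite{MR3112937}. Fix $\alpha>0$ and let $0=\tau_0<\tau_1<\cdots<\tau_{N+1}=1$ be the greedy sequence attached to $\boldsymbol{X}$ and $\alpha$, so that $N=N_\alpha(\boldsymbol{X})$ and $\norm{\boldsymbol{X}}^p_{p-var;I_j}\le\alpha$ on each $I_j:=[\tau_j,\tau_{j+1}]$, $0\le j\le N$. Because the vector fields $V^i_t(z)=A^i_tz+B^i_t$ are affine there is no blow-up and $Y$ is globally defined, so the first task is a local a priori bound on each piece $I_j$: from the standard estimates for rough integration applied to the affine maps $z\mapsto A^i_sz+B^i_s$ — whose linear parts have operator norm $\preceq\norm{A_t}_\infty$ and whose values at the origin are $\preceq\norm{B_t}_\infty$ — I would derive
\begin{equation*}
	\sup_{u\in I_j}\norm{Y_u}\;\le\;K\Big(\norm{Y_{\tau_j}}+\norm{B_t}_\infty\,\norm{\boldsymbol{X}}_{p-var;I_j}\Big),
\end{equation*}
with a multiplicative constant $K=K(p,\alpha,\norm{A_t}_\infty)$ whose logarithm is controlled by $\norm{A_t}_\infty\max(1,\alpha^{-1})$. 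The affine structure is precisely what lets the term $\sup_{u\in I_j}\norm{Y_u}$ occurring on the right of the raw increment estimate be absorbed into the left, after — when $\norm{A_t}_\infty$ is large relative to $\alpha^{-1/p}$ — further subdividing $I_j$ into finitely many pieces on which $\norm{A_t}_\infty$ times the local $p$-variation lies below a dimension-free threshold and iterating there.

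Second, I would iterate this bound across $j=0,\dots,N$. With $m_j:=\sup_{0\le u\le\tau_{j+1}}\norm{Y_u}$ one has $m_j\le K\big(m_{j-1}+\norm{B_t}_\infty\norm{\boldsymbol{X}}_{p-var;I_j}\big)$ (setting $m_{-1}:=\abs{y_0}$), and unrolling this discrete Gronwall inequality gives
\begin{equation*}
	\norm{Y_t}_\infty\;\le\;K^{N+1}\abs{y_0}+\sum_{j=0}^{N}K^{\,N+1-j}\,\norm{B_t}_\infty\,\norm{\boldsymbol{X}}_{p-var;I_j}.
\end{equation*}
The inhomogeneous sum is handled by Hölder's inequality together with the super-additivity of $(u,v)\mapsto\norm{\boldsymbol{X}}^p_{p-var;[u,v]}$ and the inequality $M_\alpha(\boldsymbol{X})\le\alpha(2N_\alpha(\boldsymbol{X})+1)$: since $\sum_j\norm{\boldsymbol{X}}^p_{p-var;I_j}\le\norm{\boldsymbol{X}}^p_{p-var;[0,1]}$ and $K\ge1$, the geometric weights yield $\sum_j K^{N+1-j}\norm{\boldsymbol{X}}_{p-var;I_j}\preceq K^{N+1}\norm{\boldsymbol{X}}_{p-var;[0,1]}$ when $K$ is bounded away from $1$ — and when $\norm{A_t}_\infty$ (hence $K-1$) is small one instead bounds the inhomogeneous part directly via the variation-of-constants representation of $Y_t$, whose homogeneous flow contributes only a constant. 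In either case $K^{N+1}=\exp\big((N+1)\log K\big)\le\exp\big(C\norm{A_t}_\infty\max(1,\alpha^{-1})(2N_\alpha(\boldsymbol{X})+1)\big)$ after relabelling $C=C(p,\alpha)$, and collecting the $\abs{y_0}$ and $\norm{B_t}_\infty\norm{\boldsymbol{X}}_{p-var;[0,1]}$ contributions gives the asserted inequality.

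The step I expect to be the main obstacle is the local a priori estimate with its multiplicative constant in precisely the stated form. In the genuinely rough regime $p\in[2,3)$ the second level of the rough integral contributes a term carrying $\norm{A_t}_\infty^{2}\,\norm{\mathbb{X}}_{p/2-var;I_j}$ beside the first-level term $\norm{A_t}_\infty\,\norm{X}_{p-var;I_j}$, so $\sup_{I_j}\norm{Y}$ cannot be absorbed in a single step once $\norm{A_t}_\infty$ is large; performing the necessary subdivision, telescoping the factors, and carrying the $\norm{B_t}_\infty$-term faithfully along — all with constants depending only on $p$ and $\alpha$ — is the substantive part, whereas the greedy-partition bookkeeping of the second step is then routine given \eqref{Boung on N} and $M_\alpha(\boldsymbol{X})\le\alpha(2N_\alpha(\boldsymbol{X})+1)$.
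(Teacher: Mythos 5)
Your proposal follows essentially the same route as the paper: the paper's proof simply invokes Theorem 10.53 of Friz--Victoir together with the Cass--Litterer--Lyons modification (separating the roles of $A$ and $B$ and replacing Lemma 10.63 by Remark 10.64 so that the full variation is replaced by $M_\alpha$, then using $M_\alpha(\boldsymbol{X})\le\alpha(2N_\alpha(\boldsymbol{X})+1)$), which is exactly the greedy-partition, local-affine-estimate, discrete-Gronwall scheme you unroll by hand. Both arguments leave the local estimate and the precise form of its multiplicative constant at the level of ``long but elementary,'' so there is nothing substantive separating the two.
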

\begin{proof}
	We only need to make two adjustments to the arguments of theorem 10.53 of \cite{friz2010multidimensional}. First, we separate the roles of $A^i$ and $B^i$ instead of introducing a uniform upper bound. Second, just as lemma 4.5 of \cite{MR3112937}, we replace each use lemma 10.63 by Remark 10.64 from \cite{friz2010multidimensional}. The whole procedure is long but elementary. What emerges from it is given by
	\begin{align*}
		\norm{Y_t}_{\infty}\leq C(\norm{y_0}+\norm{B_r}_{\infty}\norm{\boldsymbol{X}}_{p-var;[s,t]})\exp\left(C\norm{A_r}_{\infty}\max(1, \alpha^{-1} )M_{\alpha}(\boldsymbol{\hat{X}}) \right)
	\end{align*}
	Finally, our conclusion follows from the relation $M_{\alpha}(\boldsymbol{\hat{X}})\leq 2N_\alpha(\boldsymbol{\hat{X}})+1 $.
\end{proof}
\begin{remark}
	As we have already seen in \eqref{D^kY_t}, the RDE of $D^kY_t$ has polynomials of all lower order Malliavin derivatives of $Y_t$ in the place of $B^i_t$ in \eqref{Linear vector field}. It is hopeless to expect $\norm{B_t}_{\infty}$ to be exponentially integrable. Thus, to study the integrability of $D^kY_t$, we have to separate the roles of $A^i$ and $B^i$ so that $\norm{B_t}_{\infty}$ is outside the exponential. 
\end{remark}

We are ready for our main result.
\begin{theorem}
	\label{Malliavin derivative integrability}
	Suppose that assumption \ref{Assumption 2} holds for the components of $X_t$. Let $Y_t$ be the solution to the following RDE
	 \begin{equation*}
	 	Y_t=y_0+\sum_{i=1}^{d}\int_{0}^{t}V_i(Y_s)d\boldsymbol{X}^i_s,\ y_0\in\mathbb{R}^d,
	 \end{equation*}
	where $\{V_i\}_{1\leq i\leq d}\subset C^\infty_b$. For any $\alpha>0$, if we can find a constant $\eta>0$, such that
	\[ \mathbb{E}\exp\left(\eta N_\alpha(\boldsymbol{\hat{X}})\right)<+\infty.  \]
	Then, for any $t\in[0,1]$ and $k\geq 1$, we have $\norm{D^kY_t}_\infty\in L^{\xi/k}(\Omega)$, where $\xi$ is any positive number such that
	\[ \xi<\psi:=\frac{\eta}{2C_{\ref{Linear RDE estimate}}\norm{V}_{C^\infty_b} \max(1, \alpha^{-1} )},  \]
	and $C_{\ref{Linear RDE estimate}}$ is the constant in proposition \ref{Linear RDE estimate}. 
\end{theorem}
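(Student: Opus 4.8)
I would prove the bound by induction on $k$, with Proposition~\ref{Linear RDE estimate} as the engine and the RDE \eqref{D^kY_t} as the structural input. The point of \eqref{D^kY_t} is that $D^kY_t$ solves a \emph{linear} RDE driven by $\boldsymbol{\hat X}_t$ in which the genuinely linear part (the coefficient of $D^kY_s$) is $A^i_s=DV_i(Y_s)$, so $\norm{A}_\infty\le\norm{V}_{C^\infty_b}$ regardless of $k$, while everything else forms the affine term $B^{(k)}$ of Proposition~\ref{Linear RDE estimate}: by the Fa\`a di Bruno structure in \eqref{D^kY_t}, every summand of $B^{(k)}$ is a factor bounded by $\norm{V}_{C^\infty_b}$ (a derivative of some $V_i$ evaluated at $Y_s$) times a symmetric tensor product $\hat\otimes_{\alpha\in\pi}D^{|\alpha|}Y_s$ of strictly lower-order Malliavin derivatives, with $|\alpha|<k$ and $\sum_\alpha|\alpha|\le k$, possibly paired with a $D^rX^i$-increment as the driving differential. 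Since $D^ky_0=0$, Proposition~\ref{Linear RDE estimate} gives the pathwise bound
\begin{equation*}
	\norm{D^kY_t}_{\infty}\le C_{\ref{Linear RDE estimate}}\,\norm{B^{(k)}}_{\infty}\,\norm{\boldsymbol{\hat X}}_{p-var;[0,1]}\,\exp\!\big(C_{\ref{Linear RDE estimate}}\norm{V}_{C^\infty_b}\max(1,\alpha^{-1})(2N_\alpha(\boldsymbol{\hat X})+1)\big).
\end{equation*}

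\textbf{Feeding in the ingredients and the base case.} Two auxiliary facts enter. First, by Theorem~\ref{Enhance piece-wise linear rough lift} the lift $\boldsymbol{\hat X}$ has finite moments of every order; in particular $\norm{\boldsymbol{\hat X}}_{p-var;[0,1]}$ and each $\sup_{t}\norm{D^rX_t}$ (bounded by $\norm{\boldsymbol{\hat X}}_{2\rho'-var}$ up to an additive constant, and alternatively controlled by Proposition~\ref{Moment equivalence}) lie in $\bigcap_{q}L^q(\Omega)$, so these factors cost nothing in any H\"older estimate. Second, the hypothesis $\mathbb{E}\exp(\eta N_\alpha(\boldsymbol{\hat X}))<+\infty$ says exactly that the exponential factor above belongs to $L^q(\Omega)$ for every $q<\psi$ (indeed $q\cdot 2C_{\ref{Linear RDE estimate}}\norm{V}_{C^\infty_b}\max(1,\alpha^{-1})<\eta\iff q<\psi$). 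For $k=1$, \eqref{D^kY_t} degenerates: $B^{(1)}_s=nV_i(Y_s)$ is bounded by $n\norm{V}_{C^\infty_b}$, so $\norm{DY_t}_\infty\le C\norm{\boldsymbol{\hat X}}_{p-var}\exp(\cdots)$, and H\"older (an $L^{<\psi}$ factor against a $\bigcap_q L^q$ factor) gives $\norm{DY_t}_\infty\in L^{\xi}(\Omega)=L^{\xi/1}(\Omega)$ for every $\xi<\psi$.

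\textbf{Inductive step and the main obstacle.} Assume $\norm{D^jY}_\infty\in L^{\xi/j}(\Omega)$ for all $j<k$ and all $\xi<\psi$. Each summand of $B^{(k)}$ is, up to $\bigcap_q L^q$ factors, a product $\prod_m\norm{D^{j_m}Y}_\infty$ with $\sum_m j_m\le k$ and each $j_m<k$, so by H\"older (and Proposition~\ref{Moment equivalence} to shuttle between moments as needed) $\norm{B^{(k)}}_\infty\in L^{\xi/k}(\Omega)$ for every $\xi<\psi$; reinserting this into the pathwise bound and applying H\"older once more against the exponential is meant to yield $\norm{D^kY_t}_\infty\in L^{\xi/k}(\Omega)$. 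The delicate part --- and what I expect to be the real obstacle --- is precisely this exponent bookkeeping: a crude allocation of the H\"older budget lets the exponential consume a share $1/\psi$ while the product $B^{(k)}$ consumes $k/\xi'$, which only produces membership in $L^p$ for $p$ strictly below $\psi/(k+1)$ rather than the claimed $\xi/k$. To recover the sharp exponent one must exploit that the inductive hypothesis is available for \emph{every} $\xi'\in(\xi,\psi)$, so the lower-order factors may be taken in a space slightly better than $L^{\xi'/k}$ and the slack reallocated, and must choose the auxiliary H\"older exponents (and the way the polynomial factor $\norm{\boldsymbol{\hat X}}_{p-var}$ is absorbed, e.g.\ into $\exp(\epsilon N_\alpha)$ and then $\epsilon\downarrow 0$) optimally; it is here that the precise constant defining $\psi$ is forced. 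The remaining verifications --- that \eqref{D^kY_t} holds as an identity between $L^p$-limits of the ODE approximations, and that the coefficients multiplying the lower-order products are genuinely bounded because $V_i\in C^\infty_b$ --- are routine given the earlier results.
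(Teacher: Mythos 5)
Your overall architecture --- induction on $k$, Proposition~\ref{Linear RDE estimate} applied to the linear RDE \eqref{D^kY_t} with $A^i_s=DV_i(Y_s)$ and the Fa\`a di Bruno remainder playing the role of $B^{(k)}$ --- is the same as the paper's, and your base case is fine. But the inductive step has a genuine gap, and it is exactly the one you flag yourself: an induction carried out at the level of $L^p$-membership cannot produce the exponent $\xi/k$. If the hypothesis is $\norm{D^jY}_\infty\in L^{\xi'/j}$ for all $\xi'<\psi$ and $j<k$, then H\"older gives $\norm{B^{(k)}}_\infty\in L^{r}$ only for $1/r>k/\psi$, and one further H\"older against $\exp(\Xi)\in L^{q}$, $q<\psi$, forces $1/r'>(k+1)/\psi$. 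The loss is the fixed factor $(k+1)/k$ (and it compounds down the induction if the lower-order exponents are themselves already degraded), not an $\epsilon$; taking $\xi'$ and $q$ arbitrarily close to $\psi$ does not remove it, so ``reallocating the slack'' cannot rescue the claimed conclusion.

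The paper avoids this by never taking an expectation inside the induction. The inductive claim is the \emph{pathwise} inequality
\begin{equation*}
\Gamma(k)\leq\left(\norm{V}_{C^\infty_b}\vee 1\right)^{a(k)}\left(\norm{\boldsymbol{\hat{X}}_t}_{p-var;[0,1]}\vee 1\right)^{b(k)}\exp\left((k-1)\Xi\right),
\end{equation*}
where $\Gamma(k)$ is the sup-norm of the affine part of \eqref{D^kY_t} and $\Xi=C_{\ref{Linear RDE estimate}}\norm{V}_{C^\infty_b}\max(1,\alpha^{-1})(2N_\alpha(\boldsymbol{\hat{X}})+1)$. The exponential contributions of the lower-order derivatives then combine \emph{multiplicatively}: a product $D^{i_1}Y\hat{\otimes}\cdots\hat{\otimes} D^{i_\theta}Y$ with $i_1+\cdots+i_\theta\leq k$ and $\theta\geq 2$ costs at most $\exp((k-1)\Xi)$ pathwise, so the final bound is $\norm{D^kY_t}_\infty\preceq(\text{polynomial in }\norm{\boldsymbol{\hat{X}}_t}_{p-var;[0,1]})\cdot\exp(k\Xi)$. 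Raising this to the power $\xi/k$ turns the exponential into $\exp(\xi\Xi)$, which is integrable precisely when $\xi<\psi$, and a single application of H\"older at the very end (with essentially all of the budget spent on the polynomial factor, which has moments of all orders by Theorem~\ref{Enhance piece-wise linear rough lift}) yields $\norm{D^kY_t}_\infty\in L^{\xi/k}(\Omega)$. This is the missing idea in your argument: keep the entire induction pathwise so that only one H\"older inequality is ever spent on the exponential.
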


\begin{proof}
	We apply proposition \ref{Linear RDE estimate} to \eqref{D^kY_t} and get
	\begin{align}
		\label{For D^kY_t}
		\norm{D^kY_t}_\infty\leq
		 C_{\ref{Linear RDE estimate}}\cdot\Gamma(k)\cdot\norm{\boldsymbol{\hat{X}}_t}_{p-var;[0,1]}\cdot\exp(\Xi),
	\end{align}
	 where
	 \begin{align}
	 	\label{Gamma(k)}
	 	\Gamma(k)=&\norm{V}_{C^\infty_b}\sum_{i=1}^{d}\sum_{\pi\in \Pi_{k}\setminus e} \norm{\hat{\otimes}_{\alpha\in\pi}D^{(\abs{\alpha})}Y_s}_{\infty}\\
	 	&+\norm{V}_{C^\infty_b}\sum_{i=1}^{d}\sum_{r=1}^{k\wedge n}\binom{n}{r}\sum_{\pi\in \Pi_{k-r}} \norm{\hat{\otimes}_{\alpha\in\pi}D^{(\abs{\alpha})}Y_s}_{\infty},\nonumber
	 \end{align}
 	and
 	\[\exp(\Xi)= \exp\left(C_{\ref{Linear RDE estimate}}\norm{V}_{C^\infty_b} \max(1, \alpha^{-1} )(2N_\alpha(\boldsymbol{\hat{X}})+1)\right). \]
 	We claim that there exist positive $a(k), b(k)\geq 0$ such that
 	\begin{equation}
 		\label{Claim}
 		\Gamma(k)\leq \left( \norm{V}_{C^\infty_b}\vee 1 \right)^{a(k)}\left(\norm{\boldsymbol{\hat{X}}_t}_{p-var;[0,1]}  \vee 1\right)^{b(k)}\exp\left((k-1)\Xi\right).
 	\end{equation}
 	We prove it by induction. By setting $k=1$, \eqref{D^kY_t} gives
 	\[DY_t=\sum_{i=1}^{d}\int_{0}^{t}DV_i(Y_s)DY_sd\boldsymbol{X}^i_s+\sum_{i=1}^{d}\int_{0}^{t}V_i(Y_s)d\boldsymbol{DX}^i_s. \] 
 	Thus, we have 
 	\[ \Gamma(1)=\norm{V}_{C^\infty_b}, \]
 	which is our claim \eqref{Claim} for $k=1$ with $a(1)=1, b(1)=0$.	Now suppose \eqref{Claim} is true for $1\leq m\leq k-1$. Then, let us observe that each tensor product term in \eqref{Gamma(k)} has the form
 	\[  \norm{D^{i_1}Y_r\hat{\otimes}D^{i_2}Y_r\hat{\otimes}\cdots \hat{\otimes} D^{i_\theta}Y_r}_{\infty},\ 2\leq\theta\leq k,\ i_1+i_2+\cdots+i_\theta\leq k.  \]
 	Apply proposition \ref{Linear RDE estimate} to each Malliavin derivative above gives
 	\[\norm{D^{i_1}Y_r\hat{\otimes}D^{i_2}Y_r\hat{\otimes}\cdots \hat{\otimes} D^{i_\theta}Y_r}_{\infty}\preceq \norm{\boldsymbol{\hat{X}}_t}^\theta_{p-var;[0,1]} \Gamma(i_1)\Gamma(i_2)\cdots \Gamma(i_\theta) \exp(\theta\Xi). \]
 	We apply the induction hypothesis and get
 	\begin{align*}
 		\Gamma(k)&\preceq \norm{V}^{1+a(i_1)+\cdots +a(i_\theta)}_{C^\infty_b}\norm{\boldsymbol{\hat{X}}_t}_{p-var;[0,1]}^{\theta+\cdots+b(i_\theta)}\exp\left((i_1+i_2+\cdots+i_\theta-\theta+1)\Xi\right)\\
 		&\preceq \left( \norm{V}_{C^\infty_b}\vee 1 \right)^{a(k)}\left(  \norm{\boldsymbol{\hat{X}}_t}_{p-var;[0,1]} \vee 1\right)^{b(k)}\exp((k-1)\Xi),\nonumber
 	\end{align*}
 for some appropriate numbers $a(k), b(k)$.	This finishes the proof for our claim \eqref{Claim}. 
 	
 	From \eqref{Claim} and \eqref{For D^kY_t}, we infer that
 	\[\norm{D^kY_t}_\infty\preceq
 	C_{\ref{Linear RDE estimate}}  \left( \norm{V}_{C^\infty_b}\vee 1 \right)^{a(k)}\left(  \norm{\boldsymbol{\hat{X}}_t}_{p-var;[0,1]} \vee 1\right)^{b(k)+1}\exp(k\Xi).  \]
 	Since $\norm{V}_{C^\infty_b}$ is just a fixed constant, we have by definitions of $\eta ,\xi$ and $\Xi$ that
 	\[ \mathbb{E}\norm{D^kY_t}^{\xi/k}_\infty\preceq \left(\mathbb{E}\exp\left(\eta N_\alpha(\boldsymbol{\hat{X}})\right)\right)^{\frac{\xi}{\eta}}\left(\mathbb{E}\norm{\boldsymbol{\hat{X}}_t}^{(b(k)+1)q'\xi/k}_{p-var;[0,1]}\right)^{1/q'}<+\infty,  \]
 	where $q'$ is the H\"older conjugate of $\eta/\xi$. The proof is complete.
 	\end{proof}
 
 We can promptly recover the main result of \cite{MR3229800} now.
 \begin{corollary}[Theorem 1.2 of \cite{MR3229800}]
 	\label{Inahama main}
 	Let $n=1$. Suppose assumption \ref{Assumption 2} (or equivalently assumption \ref{Assumption 1}) holds for the components of $X_t$. Then $Y_t\in\mathbb{D}^{\infty}$.
 \end{corollary}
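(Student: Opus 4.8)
The plan is to derive the corollary directly from Theorem \ref{Malliavin derivative integrability}, by verifying its hypothesis in the strongest possible form: that for \emph{every} $\eta>0$ there is an $\alpha>0$ with $\mathbb{E}\exp\bigl(\eta N_\alpha(\boldsymbol{\hat X})\bigr)<+\infty$. Granting this, the threshold $\psi$ in Theorem \ref{Malliavin derivative integrability} becomes arbitrarily large, so for every $k\ge1$ and every $p\ge1$ one obtains $\norm{D^kY_t}_{\mathcal H^{\otimes k}}\le\norm{D^kY}_\infty\in L^p(\Omega)$; combined with $\mathbb{E}\abs{Y_t}^p<+\infty$ (standard rough-path bounds for \eqref{RDE} together with the Gaussian tail of $\norm{\boldsymbol X}_{p-var;[0,1]}$), this gives $\norm{Y_t}_{m,p}<+\infty$ for all $m,p$. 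To pass from finite norms to genuine membership in $\mathbb D^{m,p}$ I would then invoke the usual closability argument: the approximants $Y^l_t$ from \eqref{Approximation RDE} are smooth cylindrical functionals (for $n=1$, $X^l_t$ is affine in finitely many $W(h)$), hence lie in $\mathbb D^\infty$; by the proof of \eqref{D^kY_t} they converge, together with all their Malliavin derivatives, to $Y_t$ and its Malliavin derivatives in the relevant $L^p$ spaces, and the bounds above make these limits uniformly bounded. Letting $m,p\to\infty$ yields $Y_t\in\mathbb D^\infty$.

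The substantive work is the exponential integrability of $N_\alpha(\boldsymbol{\hat X})$ when $n=1$. First, each component of $X_t$ is then a centered Gaussian process whose covariance has finite $\rho$-variation with $\rho\in[1,3/2)$ (Assumption \ref{Assumption 1}), so $\boldsymbol X$ is a genuine Gaussian rough path and the Cass--Litterer--Lyons estimates \cite{MR3112937} apply: $N_\alpha(\boldsymbol X)$ has tails light enough that $\mathbb{E}\exp\bigl(\eta N_\alpha(\boldsymbol X)\bigr)<+\infty$ for every $\eta>0$ and every $\alpha>0$. Second, the enhancement is harmless because $\hat X_t=(X_t,DX_t)=(X_t,f_t)$ with $f_t$ \emph{deterministic}, of finite $\rho$-variation and Hölder-controlled by Assumption \ref{Assumption 1}. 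The extra second-level components of $\boldsymbol{\hat X}$ (the deterministic Young integral of $f$ against itself and the two first-chaos cross integrals $\int X_{s,\cdot}\otimes df$ and $\int f_{s,\cdot}\otimes dX$) are all controllable as $2$D Young integrals in the range $\rho<3/2$, exactly as in the proof of Theorem \ref{Enhance piece-wise linear rough lift}; I would use this to produce a deterministic control $\omega_{\det}$ with $\omega_{\det}([s,t])\le\abs{t-s}$ and
\[ \norm{\boldsymbol{\hat X}}^p_{p-var;[s,t]}\preceq\norm{\boldsymbol X}^p_{p-var;[s,t]}+\omega_{\det}([s,t]). \]

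From here a comparison of greedy partitions finishes the job: since $\omega_{\det}$ is dominated by $\abs{t-s}$ it contributes only finitely many greedy intervals on $[0,1]$, so $N_\alpha(\boldsymbol{\hat X})\le N_{\alpha'}(\boldsymbol X)+c_\alpha$ for a suitable $\alpha'>0$ and a deterministic constant $c_\alpha$; hence $\mathbb{E}\exp\bigl(\eta N_\alpha(\boldsymbol{\hat X})\bigr)\le e^{\eta c_\alpha}\,\mathbb{E}\exp\bigl(\eta N_{\alpha'}(\boldsymbol X)\bigr)<+\infty$ for all $\eta>0$. Feeding this back into Theorem \ref{Malliavin derivative integrability} completes the argument.

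The hard part, I expect, is this last transfer step: checking carefully that adjoining the deterministic path $f_t$ to $\boldsymbol X$ preserves the all-order exponential integrability of $N_\alpha$. It hinges on the Young-integrability of the mixed second-level terms, which is available precisely because $\rho<3/2$, and it is exactly here that $n=1$ is essential — for $n>1$ the derivative $DX_t$ is genuinely random and cannot be split off from $X_t$, which is why Theorem \ref{Malliavin derivative integrability} keeps the integrability of $N_\alpha(\boldsymbol{\hat X})$ as an abstract hypothesis rather than a conclusion.
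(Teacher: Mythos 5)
Your proposal is correct and follows essentially the same route as the paper: the Cass--Litterer--Lyons estimate $\mathbb{E}\exp\{CN_{\alpha}(\boldsymbol{X})^{2/\rho}\}<+\infty$ together with $\rho<2$ yields $\mathbb{E}\exp(\eta N_\alpha)<+\infty$ for every $\eta>0$, and Theorem \ref{Malliavin derivative integrability} with $\psi$ arbitrarily large then gives all moments of every $D^kY_t$. Your extra transfer step from $N_\alpha(\boldsymbol{X})$ to $N_\alpha(\boldsymbol{\hat{X}})$ (exploiting that $DX_t=f_t$ is deterministic of finite $\rho$-variation when $n=1$, so the mixed second-level terms are Young integrals dominated by a deterministic control) and the closability remark merely supply details that the paper's two-line proof leaves implicit; they do not change the argument.
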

 \begin{proof}
 	We only need to show $D^kY_t\in L^{p}(\Omega;\mathcal{H}^{\otimes k})$ for all $p\geq 1$. By theorem 6.3 of \cite{MR3112937}, we can find constant $C>0$ such that
 	\[ \mathbb{E}\exp\{CN_{\alpha}(\boldsymbol{X})^{2/\rho}  \}<+\infty. \]
 	Since $\rho<2$, theorem \ref{Malliavin derivative integrability} applies with any $\eta>0$. The desired result follows.
 \end{proof}

 	\subsection{Jacobian processes}
 	It is well known that the Jacobian processes $J^Y_t$ of $Y_t$ solves 
 	\[J^Y_t=\sum_{i=1}^{d}\int_{0}^{t}DV_i(Y_s)J^Y_sd\boldsymbol{X}^i_s,\ J^Y_0=I_{d\times d}.  \]
 	We thus have
 	\begin{theorem}
 		Under the assumptions of theorem \ref{Malliavin derivative integrability}, we have
 		\[\abs{J^Y_t}_{\infty}\in L^{\psi}(\Omega).\]
 	\end{theorem}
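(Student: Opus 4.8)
The plan is to recognise the Jacobian equation as a special case of the linear RDE treated in Proposition \ref{Linear RDE estimate} with vanishing inhomogeneous term, apply that proposition, and then tune the integrability exponent so that the resulting exponential moment is exactly the one furnished by the hypothesis of Theorem \ref{Malliavin derivative integrability}. First I would recall that $J^Y_t$ solves
\[ J^Y_t=\sum_{i=1}^{d}\int_{0}^{t}DV_i(Y_s)J^Y_s\,d\boldsymbol{X}^i_s,\qquad J^Y_0=I_{d\times d}, \]
and put it in the framework of Proposition \ref{Linear RDE estimate} by taking $S=\mathbb{R}^{d\times d}$, $H=\mathbb{R}^d$, the affine vector fields $A^i_s(J)=DV_i(Y_s)J$ (left multiplication) together with $B^i_s\equiv 0$, and $y_0=I_{d\times d}$. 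Since each $V_i\in C^\infty_b$, the bound $\norm{A_s}_\infty\le\norm{V}_{C^\infty_b}$ holds deterministically; and since $\norm{\boldsymbol{X}}_{p-var;[s,t]}\le\norm{\boldsymbol{\hat X}}_{p-var;[s,t]}$ on every subinterval, we have $N_\alpha(\boldsymbol{X})\le N_\alpha(\boldsymbol{\hat X})$.

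Proposition \ref{Linear RDE estimate}, applied with $B\equiv0$ so that the term proportional to $\norm{\boldsymbol{X}}_{p-var}$ disappears, then yields, for the fixed $\alpha>0$,
\[ \abs{J^Y_t}_\infty\le C\exp\!\Big(C_{\ref{Linear RDE estimate}}\norm{V}_{C^\infty_b}\max(1,\alpha^{-1})\,(2N_\alpha(\boldsymbol{\hat X})+1)\Big), \]
for a constant $C$ depending only on $p,\alpha,d$. In contrast with the treatment of $D^kY_t$ in Theorem \ref{Malliavin derivative integrability}, here no residual polynomial factor in $\norm{\boldsymbol{\hat X}}_{p-var}$ survives, so no Hölder splitting is needed.

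Finally I would take $\psi$-th moments. Raising the previous display to the power $\psi$ and invoking the definition
\[ \psi=\frac{\eta}{2C_{\ref{Linear RDE estimate}}\norm{V}_{C^\infty_b}\max(1,\alpha^{-1})}, \]
which makes $\psi\,C_{\ref{Linear RDE estimate}}\norm{V}_{C^\infty_b}\max(1,\alpha^{-1})(2N_\alpha(\boldsymbol{\hat X})+1)$ equal to $\eta N_\alpha(\boldsymbol{\hat X})+\tfrac{\eta}{2}$, one obtains
\[ \mathbb{E}\,\abs{J^Y_t}_\infty^{\psi}\le C^{\psi}e^{\eta/2}\,\mathbb{E}\exp\big(\eta N_\alpha(\boldsymbol{\hat X})\big)<+\infty, \]
the finiteness being precisely the standing assumption of Theorem \ref{Malliavin derivative integrability}; hence $\abs{J^Y_t}_\infty\in L^{\psi}(\Omega)$. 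There is no genuine obstacle here: the only points deserving care are checking that the inhomogeneous term really vanishes (this is exactly what upgrades the estimate to a pure exponential in $N_\alpha(\boldsymbol{\hat X})$, and what permits the exponent to be taken to be \emph{exactly} $\psi$ rather than any $\xi<\psi$ as was required for $D^kY_t$), and the elementary arithmetic identifying the constant multiplying $N_\alpha(\boldsymbol{\hat X})$ in the exponent with $\eta$.
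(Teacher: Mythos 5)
Your proposal is correct and is essentially the paper's own argument: the paper's proof is the one-line observation that the claim is immediate from Proposition \ref{Linear RDE estimate} with $\norm{B}_{\infty}=0$, and you simply spell out the resulting arithmetic showing that the exponent $\psi$ is exactly attainable because no polynomial factor in $\norm{\boldsymbol{\hat X}}_{p\text{-var}}$ survives. The details you supply (the bound $N_\alpha(\boldsymbol{X})\le N_\alpha(\boldsymbol{\hat X})$ and the identification $\psi\, C_{\ref{Linear RDE estimate}}\norm{V}_{C^\infty_b}\max(1,\alpha^{-1})(2N_\alpha+1)=\eta N_\alpha+\eta/2$) are accurate.
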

 	\begin{proof}
 		This is immediate from proposition \ref{Linear RDE estimate} with $\norm{B}_{\infty}=0$. 
 	\end{proof}
 Just as we did in corollary \ref{Inahama main}, we have
 \begin{corollary}[Theorem 6.5 of \cite{MR3112937}]
 	Let $n=1$. Suppose assumption \ref{Assumption 2} (or equivalently assumption \ref{Assumption 1}) holds for the components of $X_t$. Then $J^Y_t\in L^{p}(\Omega, \mathbb{R}^{d\times d})$ for all $p\geq 1$.
 \end{corollary}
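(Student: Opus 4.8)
The plan is to follow the proof of corollary~\ref{Inahama main} almost verbatim, now reading the Jacobian bound off the theorem immediately preceding this corollary instead of off theorem~\ref{Malliavin derivative integrability}.

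First I would record that when $n=1$ the process $X_t=I_1(f_t)$ is centred Gaussian and its natural geometric rough lift $\boldsymbol{X}_t$ from theorem~\ref{Piece-wise linear rough lift} is the usual Gaussian rough path over a covariance $R$ of finite $\rho$-variation with $\rho\in[1,3/2)$. Hence theorem~6.3 of \cite{MR3112937} applies and yields a constant $C>0$ with
\[ \mathbb{E}\exp\bigl\{\,C\,N_\alpha(\boldsymbol{X})^{2/\rho}\,\bigr\}<+\infty . \]
Since $\rho<3/2<2$ the exponent satisfies $2/\rho>1$, so $N_\alpha(\boldsymbol{X})$ has super-linear exponential moments; in particular $\mathbb{E}\exp\bigl(\eta\,N_\alpha(\boldsymbol{X})\bigr)<+\infty$ for \emph{every} $\eta>0$ and every $\alpha>0$.

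Next I would transport this integrability to $\boldsymbol{\hat{X}}_t$. For $n=1$ the only nontrivial Malliavin derivative is $DX_t=f_t$, which is deterministic and, by assumption~\ref{Assumption 2} with $r=n=1$ (equivalently the embedding estimate behind lemma~\ref{Embedding}), has finite $\rho$-variation controlled by the control $\omega$; since $\rho<2$, its canonical lift and all its iterated integrals against $X$ are Young integrals. Thus $\boldsymbol{\hat{X}}_t$ is obtained from $\boldsymbol{X}_t$ by adjoining a deterministic component of finite $\rho$-variation, and the standard estimates for greedy partitions (see the discussion around \eqref{Boung on N} and the references \cite{MR3112937}, \cite{friz2010multidimensional}) provide constants $c,C_1,C_2>0$ with
\[ N_\alpha(\boldsymbol{\hat{X}})\;\le\;C_1\,N_{c\alpha}(\boldsymbol{X})+C_2\qquad\text{almost surely.} \]
Combining the two displays, $\mathbb{E}\exp\bigl(\eta\,N_\alpha(\boldsymbol{\hat{X}})\bigr)<+\infty$ for every $\eta>0$ and every $\alpha>0$.

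Finally, fix $\alpha>0$; given a target $p\ge1$, choose $\eta$ large enough that $\psi=\eta/(2C_{\ref{Linear RDE estimate}}\norm{V}_{C^\infty_b}\max(1,\alpha^{-1}))\ge p$. The hypothesis of theorem~\ref{Malliavin derivative integrability}, hence of the theorem preceding this corollary, then holds, and that theorem gives $\abs{J^Y_t}_\infty\in L^\psi(\Omega)\subset L^p(\Omega)$. As $p\ge1$ was arbitrary, $J^Y_t\in L^p(\Omega,\mathbb{R}^{d\times d})$ for all $p\ge1$. The only step that is more than bookkeeping is the transfer $N_\alpha(\boldsymbol{\hat{X}})\le C_1 N_{c\alpha}(\boldsymbol{X})+C_2$, i.e.\ verifying that adjoining the deterministic finite-$\rho$-variation path $f_t$ and its Young cross-integrals with $X$ does not spoil the super-linear exponential integrability of the greedy count; this is where the special feature that $n=1$ forces $DX$ to be deterministic is genuinely used, and it is routine once $\rho<2$.
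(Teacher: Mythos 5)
Your proposal is correct and follows essentially the same route as the paper: invoke Theorem 6.3 of \cite{MR3112937} for the Gaussian ($n=1$) rough path to get $\mathbb{E}\exp\{CN_\alpha(\boldsymbol{X})^{2/\rho}\}<\infty$, note that $2/\rho>1$ so the exponential-moment hypothesis holds for every $\eta>0$, and then read the conclusion off the Jacobian integrability theorem with $\psi$ arbitrarily large. The one place you go beyond the paper is the explicit transfer $N_\alpha(\boldsymbol{\hat{X}})\le C_1 N_{c\alpha}(\boldsymbol{X})+C_2$ (needed because the cited hypothesis is stated for $\boldsymbol{\hat{X}}$ while the Gaussian tail bound is for $\boldsymbol{X}$); the paper silently elides this step, and your justification via the deterministic, finite-$\rho$-variation component $DX_t=f_t$ is the right one.
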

 
\subsection{General tail behavior}

We have seen that the integrabilities of $D^kY_t$ and $J^Y_t$ are determined by the exponential integrability of $N_\alpha(\boldsymbol{\hat{X}})$. Now we apply the method of \cite{MR3112937} to study it. 

First, we characterize how the $p$-variation of $\boldsymbol{\hat{X}}_t$ changes under $\mathcal{H}$ translation.
\begin{proposition}
	\label{Translation behavior}
	Suppose that assumption \ref{Assumption 2} holds for the components of $X_t$. Then, for any $p>2\rho$ and $h\in \mathcal{H}^{ d}$, we have
	\begin{equation*}
		\norm{\boldsymbol{T}_h\hat{\boldsymbol{X}}_t}^p_{p-var;[0,1]}\leq C(n,p)\norm{\hat{\boldsymbol{X}}}^p_{p-var;[0,1]}\left(\frac{\norm{h}^{np+p/2}_{\mathcal{H}^d}-1}{\norm{h}^{p/2}_{\mathcal{H}^d}-1} \vee \frac{\norm{h}^{(n+1)p}_{\mathcal{H}^d}-1}{\norm{h}^{p}_{\mathcal{H}^d}-1} \right).
	\end{equation*}
\end{proposition}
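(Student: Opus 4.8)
The plan is to make the $\mathcal{H}^d$-translation explicit through the Malliavin--Stroock formula and then bound the two levels of $\boldsymbol{T}_h\hat{\boldsymbol{X}}$ separately. For $h=(h^1,\dots,h^d)\in\mathcal{H}^d$ and $0\le j\le n$, differentiating the Malliavin--Stroock expansion $j$ times (using that translation by $h$ commutes with $D$, together with $D^mX_t=0$ for $m>n$) gives, componentwise in the $d$ coordinates,
\[
(\boldsymbol{T}_h\hat X)^{(j)}_t=\sum_{k=0}^{n-j}\frac{1}{k!}\,\langle D^{j+k}X_t,h^{\otimes k}\rangle_{\mathcal{H}^{\otimes k}},
\]
so $(\boldsymbol{T}_h\hat X)_t$ is the image of $\hat X_t$ under a fixed bounded, ``upper triangular'' linear map $A_h$ on $\mathbb{R}^d\times\mathcal{H}^d\times\dots\times(\mathcal{H}^{\otimes n})^d$. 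Since piecewise linear interpolation along $\{Q^l\}$ and the Young lift $S_2$ both commute with linear maps, the natural lift $\boldsymbol{T}_h\hat{\boldsymbol{X}}$ of Theorem~\ref{Enhance piece-wise linear rough lift} is the pushforward of $\hat{\boldsymbol{X}}$ under $(A_h,A_h\otimes A_h)$; writing $\mathbb{X}^{m_1,m_2}_{s,t}=\int_s^t D^{m_1}X_{s,r}\otimes dD^{m_2}X_r$ for the blocks of the second level of $\hat{\boldsymbol X}$, this means
\[
(\boldsymbol{T}_h\hat{\boldsymbol{X}})^{(2);j_1,j_2}_{s,t}=\sum_{k_1=0}^{n-j_1}\sum_{k_2=0}^{n-j_2}\frac{1}{k_1!\,k_2!}\,\big\langle \mathbb{X}^{j_1+k_1,\,j_2+k_2}_{s,t},\,h^{\otimes k_1}\otimes h^{\otimes k_2}\big\rangle,
\]
the contractions being taken in the slots coming from the outer derivatives.

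For the level-one bound I would use Cauchy--Schwarz, the fact that the Banach norm dominates each coordinate block, and $\norm{D^{m}X_{s,t}}_{\mathcal{H}^{\otimes m}}\le\norm{\hat{\boldsymbol{X}}}_{p-var;[s,t]}$, to get $\norm{(\boldsymbol{T}_h\hat X)^{(j)}_{s,t}}\le\big(\sum_{k=0}^{n}\norm{h}^k_{\mathcal{H}^d}/k!\big)\norm{\hat{\boldsymbol{X}}}_{p-var;[s,t]}$. Raising to the power $p>1$, invoking the power-mean inequality and $k!\ge1$, then using the super-additivity of $[s,t]\mapsto\norm{\hat{\boldsymbol{X}}}^p_{p-var;[s,t]}$ and summing over the finitely many $j$ and coordinates, one obtains
\[
\norm{(\boldsymbol{T}_h\hat X)}^p_{p-var;[0,1]}\le C(n,p)\Big(\sum_{k=0}^{n}\norm{h}^{kp}_{\mathcal{H}^d}\Big)\norm{\hat{\boldsymbol{X}}}^p_{p-var;[0,1]}=C(n,p)\,\frac{\norm{h}^{(n+1)p}_{\mathcal{H}^d}-1}{\norm{h}^{p}_{\mathcal{H}^d}-1}\,\norm{\hat{\boldsymbol{X}}}^p_{p-var;[0,1]},
\]
which is the second member of the asserted maximum.

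The level-two bound is analogous: from the displayed formula for $(\boldsymbol{T}_h\hat{\boldsymbol{X}})^{(2);j_1,j_2}_{s,t}$, together with $\norm{\mathbb{X}^{m_1,m_2}_{s,t}}\le\norm{\hat{\boldsymbol{X}}}^2_{p-var;[s,t]}$, $k_1!k_2!\ge1$, $p/2>1$ and the power-mean inequality, one gets $\norm{(\boldsymbol{T}_h\hat{\boldsymbol{X}})^{(2);j_1,j_2}_{s,t}}^{p/2}\le C(n,p)\sum_{k_1,k_2=0}^{n}\norm{h}^{(k_1+k_2)p/2}_{\mathcal{H}^d}\,\norm{\mathbb{X}^{j_1+k_1,j_2+k_2}_{s,t}}^{p/2}$; summing over a partition, using super-additivity of the $(p/2)$-variation functional of each block together with $\norm{\mathbb{X}^{m_1,m_2}}^{p/2}_{p/2-var;[0,1]}\le\norm{\hat{\boldsymbol{X}}}^p_{p-var;[0,1]}$, and grouping the double sum by $k_1+k_2=m\in\{0,\dots,2n\}$, yields
\[
\norm{(\boldsymbol{T}_h\hat{\boldsymbol{X}})^{(2)}}^{p/2}_{p/2-var;[0,1]}\le C(n,p)\Big(\sum_{m=0}^{2n}\norm{h}^{mp/2}_{\mathcal{H}^d}\Big)\norm{\hat{\boldsymbol{X}}}^p_{p-var;[0,1]}=C(n,p)\,\frac{\norm{h}^{np+p/2}_{\mathcal{H}^d}-1}{\norm{h}^{p/2}_{\mathcal{H}^d}-1}\,\norm{\hat{\boldsymbol{X}}}^p_{p-var;[0,1]},
\]
the first member of the maximum. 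Adding the two estimates — recall that $\norm{\boldsymbol{T}_h\hat{\boldsymbol{X}}}^p_{p-var;[0,1]}$ is by definition $\norm{(\boldsymbol{T}_h\hat X)}^p_{p-var;[0,1]}+\norm{(\boldsymbol{T}_h\hat{\boldsymbol{X}})^{(2)}}^{p/2}_{p/2-var;[0,1]}$ — and using $a+b\le 2(a\vee b)$ to absorb the constant finishes the argument.

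The step I expect to be the main obstacle is the first paragraph: making rigorous that the natural lift of the translated process is the $(A_h,A_h\otimes A_h)$-pushforward of $\hat{\boldsymbol{X}}$. One must check that the piecewise linear approximations of $\hat X(\cdot+h)$ along $\{Q^l\}$ are exactly $A_h$ applied to the piecewise linear approximations of $\hat X$ (this follows because each $D^jX^l_t(\omega+h)$ is the $Q^l$-interpolation of $D^jX_t(\omega+h)$ and the Malliavin--Stroock expansion is linear), that for finite-variation paths $\int (A_hY)\otimes d(A_hY)=(A_h\otimes A_h)\int Y\otimes dY$, and then pass to the limit using Theorem~\ref{Enhance piece-wise linear rough lift} and the boundedness, hence continuity, of the pushforward. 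The remaining ingredients — that the Banach norms on the first and second tensor levels dominate each coordinate block, and that each cross iterated integral satisfies $\norm{\mathbb{X}^{m_1,m_2}}^{p/2}_{p/2-var;[0,1]}\le\norm{\hat{\boldsymbol{X}}}^p_{p-var;[0,1]}$ — are routine.
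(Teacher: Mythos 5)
Your proposal is correct and follows essentially the same route as the paper: expand the translated enhanced path via the Malliavin--Stroock formula (and its bilinear consequence for the second-level integrals), bound each term by the corresponding block of $\hat{\boldsymbol{X}}$ times a power of $\norm{h}_{\mathcal{H}^d}$, and sum the resulting geometric series at each level. Your $A_h$-pushforward framing and the explicit verification that the natural lift commutes with this linear map are just a more careful bookkeeping of what the paper asserts directly when it writes the translated iterated integrals.
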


\begin{proof}
	Without loss of generality, we assume that $d=1$. For level 1, we have for each $0\leq k\leq n$ that
	\begin{align*}
		D^kX_t(\omega+h)=D^kX_t(\omega)+\sum_{j=1}^{n-k}\frac{1}{j!}\cdot \langle D^{k+j}X_t(\omega), h^{\otimes j} \rangle_{\mathcal{H}^{\otimes j}}.
	\end{align*}
	As a result,
	\begin{align}
		&\norm{D^kX_t(\omega+h)}^p_{p-var;[0,1]}\nonumber\\
		&\leq C(n,p,k) \sum_{j=0}^{n-k}\norm{D^{k+j}X_t}^p_{p-var;[0,1]}\norm{h}^{pj}_{\mathcal{H}} \nonumber\\
		&\leq C(n,p,k)\norm{\hat{\boldsymbol{X}}}^p_{p-var;[0,1]}\sum_{j=0}^{n-k}\norm{h}^{jp}_{\mathcal{H}}\nonumber\\
		&\leq C(n,p)\norm{\hat{\boldsymbol{X}}}^p_{p-var;[0,1]}\frac{\norm{h}^{(n+1)p}_{\mathcal{H}}-1}{\norm{h}^{p}_{\mathcal{H}}-1} \label{Translated level 1}
	\end{align}
	Similarly for level 2, we have for any $0\leq k,j\leq n$ that
	\begin{align*}
		&\int_{s}^{t}D^kX_{s,r}(\omega+h)\otimes dD^jX_r(\omega+h)\\
		&=\sum_{\substack{0\leq i_1\leq n-k\\ 0\leq i_2\leq n-j}}C(n,i_1,i_2)\int_{s}^{t}\langle D^{k+i_1}X_{s,r}(\omega), h^{\otimes i_1} \rangle_{\mathcal{H}^{\otimes i_1}}\otimes d\langle D^{j+i_2}X_r(\omega), h^{\otimes i_2} \rangle_{\mathcal{H}^{\otimes i_2}}.
	\end{align*}
	Therefore,
	\begin{align*}
		&\norm{	\int_{s}^{t}D^kX_{s,r}(\omega+h)\otimes dD^jX_r(\omega+h)}^{p/2}_{p/2-var;[0,1]}\\
		&\leq C(n,p,k,j)\sum_{\substack{0\leq i_1\leq n-k\\ 0\leq i_2\leq n-j} }\norm{\int_{s}^{t}D^kX_{s,r}(\omega)\otimes dD^jX_r(\omega)}^{p/2}_{p/2-var;[0,1]}\norm{h}_{\mathcal{H}}^{(i_1+i_2)p/2}.
	\end{align*}
	 As a result,
\begin{align}
	&\norm{	\int_{s}^{t}D^kX_{s,r}(\omega+h)\otimes dD^jX_r(\omega+h)}^{p/2}_{p/2-var;[0,1]}\nonumber\\
	&\leq C(n,p)\norm{\hat{\boldsymbol{X}}}^p_{p-var;[0,1]}\frac{\norm{h}^{np+p/2}_{\mathcal{H}}-1}{\norm{h}^{p/2}_{\mathcal{H}}-1}\label{Translated level 2}
\end{align}
	Combining \eqref{Translated level 1} \eqref{Translated level 2}, we conclude
	\begin{equation*}
		\norm{\boldsymbol{T}_h\hat{\boldsymbol{X}}_t}^p_{p-var;[0,1]}\leq C(n,p)\norm{\hat{\boldsymbol{X}}}^p_{p-var;[0,1]}\left(\frac{\norm{h}^{np+p/2}_{\mathcal{H}}-1}{\norm{h}^{p/2}_{\mathcal{H}}-1} \vee \frac{\norm{h}^{(n+1)p}_{\mathcal{H}}-1}{\norm{h}^{p}_{\mathcal{H}}-1} \right).
	\end{equation*}
\end{proof}

Next we establish the tail behavior of $N_\alpha(\boldsymbol{\hat{X}})$.
\begin{proposition}
	\label{Tail}
	Suppose that assumption \ref{Assumption 2} holds for the components of $X_t$. For any $p>2\rho$, we can find $\kappa>0$ such that, when $M>0$ is big enough,
	\begin{equation}
		\label{Tail estimate}
		\mathbb{P}\left\{ N_\alpha(\boldsymbol{\hat{X}})>M  \right\}\leq\frac{1}{2}\exp\left\{  -\frac{1}{2}\left( \frac{\alpha M}{4C(n,p)\kappa}\right)^{2/(np)} \right\},
	\end{equation}	
	where $C(n,p)$ is the constant appears in proposition \ref{Translation behavior}.
\end{proposition}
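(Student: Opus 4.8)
The plan is to derive the tail bound for $N_\alpha(\boldsymbol{\hat{X}})$ from the translation estimate of Proposition \ref{Translation behavior} together with the Gaussian isoperimetric inequality, in the spirit of \cite{MR3112937}. Fix $p>2\rho$. The first step is to pick a threshold: since $\norm{\boldsymbol{\hat{X}}}^p_{p-var;[0,1]}$ is almost surely finite — indeed, by Theorem \ref{Enhance piece-wise linear rough lift} it lies in every $L^q(\Omega)$ — there is a constant $\kappa=\kappa(n,p)>0$ for which the set
\[ A:=\left\{\omega:\ \norm{\boldsymbol{\hat{X}}(\omega)}^p_{p-var;[0,1]}\le \kappa\right\} \]
has $\mathbb{P}(A)\ge 1/2$. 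Write $K$ for the closed unit ball of the Cameron–Martin space $\mathcal{H}^d$, and recall that translating the base point by $h\in\mathcal{H}^d$ acts on the rough lift as $\boldsymbol{\hat{X}}(\,\cdot+h)=\boldsymbol{T}_h\boldsymbol{\hat{X}}(\,\cdot\,)$.

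The core of the argument is a deterministic inclusion. Suppose $\omega=\tau+h$ with $\tau\in A$ and $\norm{h}_{\mathcal{H}^d}\le r$ for some $r\ge 1$. Combining the bound \eqref{Boung on N}, namely $\alpha N_\alpha(\boldsymbol{\hat{X}}(\omega))\le\norm{\boldsymbol{\hat{X}}(\omega)}^p_{p-var;[0,1]}$, with Proposition \ref{Translation behavior} (both ratios there being increasing in $\norm{h}$, so that $\norm{h}\le r$ may be substituted) gives
\[ \alpha N_\alpha(\boldsymbol{\hat{X}}(\omega))\le \norm{\boldsymbol{T}_h\boldsymbol{\hat{X}}(\tau)}^p_{p-var;[0,1]}\le C(n,p)\,\kappa\left(\frac{r^{np+p/2}-1}{r^{p/2}-1}\vee\frac{r^{(n+1)p}-1}{r^{p}-1}\right)\le 4\,C(n,p)\,\kappa\, r^{np}, \]
the last inequality holding once $r$ is large (e.g. $r^{p/2}\ge 2$, so that each denominator is at least half the corresponding power of $r$). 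Hence, given $M$, setting
\[ r:=\left(\frac{\alpha M}{4C(n,p)\kappa}\right)^{1/(np)}, \]
which exceeds that threshold (and is $\ge 1$) as soon as $M$ is large enough, we obtain $N_\alpha(\boldsymbol{\hat{X}}(\omega))\le M$ for every $\omega\in A+rK$; equivalently, $\{N_\alpha(\boldsymbol{\hat{X}})>M\}\subseteq\Omega\setminus(A+rK)$.

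Finally I would invoke the Gaussian isoperimetric inequality (theorem 4.33 of \cite{bogachev1998gaussian}, already used in Proposition \ref{LDP for level 1}): since $\mathbb{P}(A)\ge 1/2$ one has $\mathbb{P}(A+rK)\ge\Phi(r)$, where $\Phi$ is the standard normal distribution function, so that, using $1-\Phi(r)\le\tfrac12 e^{-r^2/2}$,
\[ \mathbb{P}\{N_\alpha(\boldsymbol{\hat{X}})>M\}\le 1-\Phi(r)\le\frac12\exp\left\{-\frac{r^2}{2}\right\}=\frac12\exp\left\{-\frac12\left(\frac{\alpha M}{4C(n,p)\kappa}\right)^{2/(np)}\right\}, \]
which is the claim. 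I expect the only delicate points to be the standard measurability convention for Minkowski sums such as $A+rK$ (one passes to inner measure) and the elementary asymptotics of the two ratios from Proposition \ref{Translation behavior} that produce the exponent $2/(np)$ and force the ``$M$ big enough'' caveat; the substantive estimate — how the $p$-variation of $\boldsymbol{\hat{X}}$ inflates under an $\mathcal{H}$-translation — has already been carried out in Proposition \ref{Translation behavior}, so the main task here is the assembly of these ingredients rather than a new estimate.
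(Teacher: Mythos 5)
Your proposal is correct and follows essentially the same route as the paper: choose $\kappa$ so that the sublevel set of $\norm{\boldsymbol{\hat{X}}}^p_{p-var}$ has probability at least $1/2$, use \eqref{Boung on N} together with Proposition \ref{Translation behavior} to show the enlargement of that set by $r_M B(\mathcal{H})$ is contained in $\{N_\alpha\le M\}$, and conclude via the Gaussian isoperimetric inequality. The only cosmetic difference is that you verify the inclusion directly while the paper phrases it as a contradiction; the estimates and the resulting exponent $2/(np)$ are identical.
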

\begin{proof}
	Without loss of generality, we assume $d=1$. For any $\kappa>0$, let 
	\[ F(\kappa):=\left\{ \omega\in\Omega : \norm{\hat{\boldsymbol{X}}_t}^p_{p-var;[0,1]}< \kappa     \right\}. \]
	We claim that for $\kappa>0$ fixed and $M>0$ big enough, we have
	\begin{equation}
		\label{Isoperimetric setting}
		\left\{ N_\alpha(\boldsymbol{\hat{X}})>M  \right\}\subset  \Omega\setminus (F(\kappa)+r_MB(\mathcal{H}) ),
	\end{equation}
	where $B(\mathcal{H})$ is the unit ball in $\mathcal{H}$,
	\[r_M=\left( \frac{\alpha M}{4C(n,p)\kappa}  \right)^{1/(np)}, \]
	and $C(n,p)$ is the constant appears in proposition \ref{Translation behavior}. 
	
	To see this, observe that for any $\omega\in F(\kappa)+r_MB(\mathcal{H})$, we can write
	\[\omega=\omega'+rh,\ \omega'\in F(\kappa),\ 0\leq r\leq r_M,\ h\in B(\mathcal{H}).     \]
	Then, by \eqref{Boung on N} and proposition \ref{Translation behavior}
	\begin{align*}
		\alpha N_\alpha(\boldsymbol{\hat{X}})(\omega)&\leq \norm{\hat{\boldsymbol{X}}_t(\omega)}^p_{p-var;[0,1]}=\norm{\boldsymbol{T}_{rh}\hat{\boldsymbol{X}}_t(\omega')}^p_{p-var;[0,1]}\\
		&\leq C(n,p)\norm{\hat{\boldsymbol{X}}_t(\omega')}^p_{p-var;[0,1]}\left(\frac{\norm{rh}^{np+p/2}_{\mathcal{H}}-1}{\norm{rh}^{p/2}_{\mathcal{H}}-1} \vee \frac{\norm{rh}^{(n+1)p}_{\mathcal{H}}-1}{\norm{rh}^{p}_{\mathcal{H}}-1}\right)\\
		&\leq C(n,p)\kappa\left( \frac{(r_M)^{np+p/2}-1}{(r_M)^{p/2}-1} \vee \frac{(r_M)^{(n+1)p}-1}{(r_M)^{p}-1} \right).
	\end{align*}
	If we can find 
	\[ \omega\in \left\{ N_\alpha(\boldsymbol{\hat{X}})>M  \right\}\cap \left(F(\kappa)+r_MB(\mathcal{H})\right), \]
	then
		\begin{equation}
		\label{Bound on r_M}
		\frac{\alpha M}{C(n,p)\kappa}\leq  \frac{(r_M)^{np+p/2}-1}{(r_M)^{p/2}-1} \vee \frac{(r_M)^{(n+1)p}-1}{(r_M)^{p}-1}.
	\end{equation}
	When $M$ is large enough, \eqref{Bound on r_M} implies
	\begin{equation*}
		\frac{\alpha M}{2C(n,p)\kappa}\leq (r_M)^{np}=\frac{\alpha M}{4C(n,p)\kappa},
	\end{equation*}
	which is a contradiction. Thus, we must have \eqref{Isoperimetric setting}.
	
	Finally, we may choose $\kappa$ big enough so that $\mathbb{P}(F(\kappa))\geq 1/2$. Then, by Gaussian isoperimetric inequality, we have
	\begin{align*}
		\mathbb{P}\left\{ N_\alpha>M  \right\}&\leq 1-\mathbb{P}(F(\kappa)+r_MB(\mathcal{H}) )\\
		&\leq \frac{1}{2}\exp\left( -r^2_M/2 \right)=\frac{1}{2}\exp\left\{  -\frac{1}{2}\left( \frac{\alpha M}{4C(n,p)\kappa}\right)^{2/(np)} \right\}.
	\end{align*}
	The proof is complete.
\end{proof}
	
	\begin{remark}
		It is worthwhile making a number of comments. When substituting $n=1$ into \eqref{Tail estimate}, our result is weaker than theorem 6.3 of \cite{MR3112937}. To see where the sharpness loss is coming from, we need to go back to the $\mathcal{H}$ translation of $X_t$
		\begin{equation*}
			X_t(\omega+rh)-X_t(\omega)=\sum_{k=1}^{n}\frac{r^k}{k!}\cdot\langle D^kX_t(\omega), h^{\otimes k} \rangle_{\mathcal{H}^{\otimes k}},\ r>0,\ h\in\mathcal{H}.
		\end{equation*}
		The term on the right-hand side can be recast in another way
		\[ \langle D^kX_t(\omega), h^{\otimes k} \rangle_{\mathcal{H}^{\otimes k}}=C(k)\langle I_{n-k}(f_t), h^{\otimes k} \rangle_{\mathcal{H}^{\otimes k}}=C(k)I_{n-k}\left(\langle f_t, h^{\otimes k} \rangle_{\mathcal{H}^{\otimes k}}\right).\]
		The last term $\langle f_t, h^{\otimes n} \rangle_{\mathcal{H}^{\otimes n}}$, corresponding to $k=n$, is deterministic with finite $\rho$-variation (proposition \ref{Embedding}), while others are random with finite $p$-variation (or $2\rho^+$-variation). This is due to the fact that multiple Wiener integrals tend to make the trajectories ``twice more irregular'' than its kernels. For instance, in the classical Wiener space, Brownian motion can be written as $W_t=I_1(f_t)$, with $f_t=1_{[0,t]}$, which has finite $1$-variation in $L^2([0,1])$. While the trajectories of $W_t$ have finite $2^+$-variation.
		
		Coming back to our discussion. When $n=1$ (the Gaussian case), $\mathcal{H}$ translation leads to adding a deterministic process $\langle f_t, h \rangle_{\mathcal{H}}$ whose regularity is significantly better than that of $X_t$, which in turn leads to a slower growth of $\norm{X_t(\omega+h)}_{p-var;[0,1]}$ and $N_\alpha(\hat{\boldsymbol{X}}(\omega+h))$. This is the essential reason, estimate in theorem 6.3 of \cite{MR3112937} can replace the exponent $2/p$ in \eqref{Tail estimate} by $2/\rho$. When $n>1$, the adding terms must include random processes of form $I_{n-k}\left(\langle f_t, h^{\otimes k} \rangle_{\mathcal{H}^{\otimes k}}\right)$ that have same regularity as $X_t$. As a result,  $\norm{X_t(\omega+h)}_{p-var;[0,1]}$ and $N_\alpha(\hat{\boldsymbol{X}}(\omega+h))$ grow drastically faster than the $n=1$ case. With current method and no additional assumption on $\{f_t\}_{0\leq t\leq 1}$, estimate in \eqref{Tail estimate} is the best we can get for general chaos processes. 
		
		It will be interesting to investigate what type of additional algebraic structures we can put on the kernels $\{f_t\}_{0\leq t\leq 1}$ to make up for the loss of Gaussianity so that \eqref{Tail estimate} can be improved.
	\end{remark}
	Although proposition \ref{Tail} does not provide exponential integrability for $N_\alpha(\hat{\boldsymbol{X}})$, we can still achieve local integrability, which is sufficient for many studies including the absolute continuity of the law of $Y_t$.  
\begin{corollary}
	Under the assumptions of theorem \ref{Malliavin derivative integrability}, we have that for any $t\in[0,1]$, $Y_t\in \mathbb{D}^{\infty}_{loc}$.
\end{corollary}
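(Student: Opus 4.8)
The plan is to invoke the standard localization criterion for $\mathbb{D}^\infty_{loc}$: it suffices to produce events $\Omega_M\uparrow\Omega$ (modulo a null set) together with random variables $Y^M_t\in\mathbb{D}^\infty$ such that $Y_t=Y^M_t$ on $\Omega_M$. Note first that Theorem \ref{Malliavin derivative integrability} by itself only gives $\norm{D^kY_t}_\infty\in L^{\xi/k}(\Omega)$ for $\xi$ below the (in general finite) threshold $\psi$, which does \emph{not} place $Y_t$ in $\mathbb{D}^\infty$; and localizing the vector fields, i.e. replacing $V_i$ by $\chi V_i$ with a spatial cutoff $\chi$, is of no help here, since this leaves $N_\alpha(\boldsymbol{\hat{X}})$ untouched, and the integrability of $N_\alpha(\boldsymbol{\hat{X}})$ — only stretched-exponential in general, by Proposition \ref{Tail} — is precisely what obstructs membership in $\mathbb{D}^\infty$. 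Hence the localization must be carried out in $\omega$. By Theorem \ref{Enhance piece-wise linear rough lift} the quantity $\norm{\boldsymbol{\hat{X}}}^p_{p-var;[0,1]}$ is a.s. finite for $p>2\rho$, so the events $\Omega_M:=\{\norm{\boldsymbol{\hat{X}}}^p_{p-var;[0,1]}\le M\}$ increase to $\Omega$ a.s., and on $\Omega_M$ one has $N_\alpha(\boldsymbol{\hat{X}})\le M/\alpha$ by \eqref{Boung on N}.

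Fix $\phi_M\in C^\infty([0,\infty);[0,1])$ with $\phi_M\equiv1$ on $[0,M]$ and $\phi_M\equiv0$ on $[2M,+\infty)$, set $\chi_M:=\phi_M(\norm{\boldsymbol{\hat{X}}}^p_{p-var;[0,1]})$, and define $Y^M_t:=\chi_M\,Y_t$; then $Y^M_t=Y_t$ on $\Omega_M$. To check $Y^M_t\in\mathbb{D}^\infty$ I would argue through the piecewise-linear approximations: let $Y^l_t$ solve the RDE driven by $X^l$ (so $Y^l_t\in\mathbb{D}^\infty$) and $\chi^l_M:=\phi_M(\norm{\boldsymbol{\hat{X}}^l}^p_{p-var;[0,1]})$. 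Since $Y^l_t\to Y_t$ and $\boldsymbol{\hat{X}}^l\to\boldsymbol{\hat{X}}$ in $p$-variation (Theorem \ref{Enhance piece-wise linear rough lift}), one gets $\chi^l_MY^l_t\to\chi_MY_t$ a.s. and in every $L^q$ (the $\chi^l_M$ are bounded and supported where $\norm{\boldsymbol{\hat{X}}^l}_{p-var;[0,1]}$, hence $|Y^l_t|$, is bounded). It then remains to bound $\chi^l_MY^l_t$ in $\mathbb{D}^{k,q}$ uniformly in $l$, for then the limit lies in every $\mathbb{D}^{k,q}$ by a standard closure argument, i.e. $\chi_MY_t\in\mathbb{D}^\infty$. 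Now $D^k(\chi^l_MY^l_t)=\sum_{j=0}^k\binom kjD^j\chi^l_M\otimes D^{k-j}Y^l_t$; on $\{\norm{\boldsymbol{\hat{X}}^l}^p_{p-var;[0,1]}\le 2M\}$ — which contains the support of every $D^j\chi^l_M$, $j\ge1$, and of $\chi^l_M$ itself, and on which $\norm{\boldsymbol{\hat{X}}^l}_{p-var;[0,1]}\le(2M)^{1/p}$ and $N_\alpha(\boldsymbol{\hat{X}}^l)\le2M/\alpha$ by \eqref{Boung on N} — the estimate \eqref{For D^kY_t} together with the claim \eqref{Claim} from the proof of Theorem \ref{Malliavin derivative integrability}, applied to $Y^l_t$, bounds each $\norm{D^{k-j}Y^l_t}_\infty$ by a constant depending only on $M$, $k$ and the fixed data $\norm{V}_{C^\infty_b},\alpha,p$; meanwhile Fa\`a di Bruno's formula (Lemma \ref{Faa di Bruno}) expresses $D^j\chi^l_M$, $j\ge1$, as a bounded combination of products of Malliavin derivatives of $\norm{\boldsymbol{\hat{X}}^l}^p_{p-var;[0,1]}$. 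Thus the desired uniform $\mathbb{D}^{k,q}$-bound reduces to a uniform-in-$l$ $\mathbb{D}^\infty$-bound on $\norm{\boldsymbol{\hat{X}}^l}^p_{p-var;[0,1]}$, with limit $\norm{\boldsymbol{\hat{X}}}^p_{p-var;[0,1]}\in\mathbb{D}^\infty$.

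The main obstacle is exactly this last input: Malliavin smoothness, with controlled norms, of the homogeneous $p$-variation norm of the chaos rough lift. I would establish it by writing the norm as a countable supremum over partitions of $[0,1]$ with rational endpoints; each finite-partition functional is a function — Lipschitz in the ambient tensor norms — of finitely many first- and second-level increments of $\boldsymbol{\hat{X}}$, which live in Wiener chaoses of order at most $2n$ and hence belong to $\mathbb{D}^\infty$ with explicitly estimable Malliavin norms, the estimates being uniform over the approximating lifts by Lemma \ref{Piece-wise p-var}. One then passes the supremum through each $\mathbb{D}^{k,q}$ exactly as in the Gaussian case, using the Wiener-chaos moment equivalence of Proposition \ref{Moment equivalence} in place of Gaussian hypercontractivity, by exhibiting for each derivative order an $L^q$ majorant uniform over partitions. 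Granting this, the two preceding paragraphs yield $Y_t\in\mathbb{D}^\infty_{loc}$ for every $t\in[0,1]$, which is the assertion.
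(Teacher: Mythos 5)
Your overall strategy --- localize in $\omega$ on $\Omega_M=\{\norm{\boldsymbol{\hat{X}}}^p_{p-var;[0,1]}\le M\}$ and multiply by a smooth cutoff $\chi_M=\phi_M(\norm{\boldsymbol{\hat{X}}}^p_{p-var;[0,1]})$ --- is the textbook-rigorous way to exhibit membership in $\mathbb{D}^\infty_{loc}$, and your diagnosis of why Theorem \ref{Malliavin derivative integrability} alone does not give $\mathbb{D}^\infty$ is correct. But the argument has a genuine gap at exactly the step you flag as the main obstacle: you need $\norm{\boldsymbol{\hat{X}}}^p_{p-var;[0,1]}$ (and its approximations, uniformly) to lie in $\mathbb{D}^\infty$ in order for $\chi_M$ and the terms $D^j\chi^l_M$, $j\ge 1$, to make sense in every $\mathbb{D}^{k,q}$, and the sketch you give does not deliver this. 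A supremum --- even a finite maximum --- of $\mathbb{D}^\infty$ random variables, or a Lipschitz function of them, is in general only \emph{once} Malliavin differentiable: the standard lemma that passes suprema of $\mathbb{D}^{1,q}$-bounded families into $\mathbb{D}^{1,q}$ has no analogue for $k\ge 2$ (compare $|W(h)|=\max(W(h),-W(h))$, which lies in $\mathbb{D}^{1,p}$ but not in $\mathbb{D}^{2,p}$, its second derivative involving $\delta_0(W(h))$). Already each of your ``finite-partition functionals,'' being a norm --- hence merely Lipschitz and non-smooth at the origin --- of finitely many chaos increments, fails to be twice Malliavin differentiable, so the uniform $\mathbb{D}^{k,q}$ bounds you need for $k\ge 2$ are simply not available along this route; moment equivalence of Wiener chaoses controls $L^q$ norms, not the existence of higher derivatives, and cannot repair this.

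The paper sidesteps the issue entirely: it localizes on the events $\{N_\alpha(\boldsymbol{\hat{X}})\le M\}$, which exhaust $\Omega$ because Proposition \ref{Tail} gives all moments of $N_\alpha(\boldsymbol{\hat{X}})$, and then observes that on each such event the deterministic bound coming from \eqref{For D^kY_t} and \eqref{Claim} makes $D^kY_t\cdot\boldsymbol{1}_{\{N_\alpha(\boldsymbol{\hat{X}})\le M\}}$ almost surely bounded, which it takes as the required localization; no Malliavin-smooth cutoff functional is constructed. If you wish to retain your more explicit construction of localizing random variables $Y^M_t\in\mathbb{D}^\infty$, you must replace $\norm{\boldsymbol{\hat{X}}}^p_{p-var;[0,1]}$ by a localizing functional that is genuinely $\mathbb{D}^\infty$ --- a nontrivial additional task that neither your sketch nor the paper addresses.
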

\begin{proof}
	From proposition \ref{Tail}, we know that for any $\alpha>0$, $\mathbb{E}(N_\alpha(\hat{X}))^p<\infty$ for all $p>0$. As a result,
	\[ \cup_{M\in \mathbb{N}^+}\{\omega\in\Omega : N_\alpha(\hat{X}(\omega))\leq M  \}=\Omega. \]
	From theorem \ref{Malliavin derivative integrability}, we know that $D^kY_t\cdot \boldsymbol{1}_{ \{N_\alpha(\hat{X})\leq M\} }$ is bounded almost surely, hence $L^p(\Omega)$ integrable for any $p>1$. This gives the desired localization of $D^kY_t$ for any $k\geq 1$.
\end{proof}

\bibliographystyle{plain}
\bibliography{reference}

\begin{thebibliography}{10}

\bibitem{MR3531675}
F.~Baudoin, E.~Nualart, C.~Ouyang, and S.~Tindel.
\newblock On probability laws of solutions to differential systems driven by a
  fractional {B}rownian motion.
\newblock {\em Ann. Probab.}, 44(4):2554--2590, 2016.

\bibitem{bogachev1998gaussian}
Vladimir~Igorevich Bogachev.
\newblock {\em Gaussian measures}.
\newblock Number~62. American Mathematical Soc., 1998.

\bibitem{MR2680405}
Thomas Cass and Peter Friz.
\newblock Densities for rough differential equations under {H}\"{o}rmander's
  condition.
\newblock {\em Ann. of Math. (2)}, 171(3):2115--2141, 2010.

\bibitem{MR3298472}
Thomas Cass, Martin Hairer, Christian Litterer, and Samy Tindel.
\newblock Smoothness of the density for solutions to {G}aussian rough
  differential equations.
\newblock {\em Ann. Probab.}, 43(1):188--239, 2015.

\bibitem{MR3112937}
Thomas Cass, Christian Litterer, and Terry Lyons.
\newblock Integrability and tail estimates for {G}aussian rough differential
  equations.
\newblock {\em Ann. Probab.}, 41(4):3026--3050, 2013.

\bibitem{MR1883719}
Laure Coutin and Zhongmin Qian.
\newblock Stochastic analysis, rough path analysis and fractional {B}rownian
  motions.
\newblock {\em Probab. Theory Related Fields}, 122(1):108--140, 2002.

\bibitem{dembo2009large}
Amir Dembo and Ofer Zeitouni.
\newblock {\em Large deviations techniques and applications}, volume~38.
\newblock Springer Science \& Business Media, 2009.

\bibitem{MR2667703}
Peter Friz and Nicolas Victoir.
\newblock Differential equations driven by {G}aussian signals.
\newblock {\em Ann. Inst. Henri Poincar\'{e} Probab. Stat.}, 46(2):369--413,
  2010.

\bibitem{friz2010multidimensional}
Peter~K Friz and Nicolas~B Victoir.
\newblock {\em Multidimensional stochastic processes as rough paths: theory and
  applications}, volume 120.
\newblock Cambridge University Press, 2010.

\bibitem{MR3112925}
Martin Hairer and Natesh~S. Pillai.
\newblock Regularity of laws and ergodicity of hypoelliptic {SDE}s driven by
  rough paths.
\newblock {\em Ann. Probab.}, 41(4):2544--2598, 2013.

\bibitem{MR3229800}
Yuzuru Inahama.
\newblock Malliavin differentiability of solutions of rough differential
  equations.
\newblock {\em J. Funct. Anal.}, 267(5):1566--1584, 2014.

\bibitem{MR4020062}
Sefika Kuzgun and David Nualart.
\newblock Rate of convergence in the {B}reuer-{M}ajor theorem via chaos
  expansions.
\newblock {\em Stoch. Anal. Appl.}, 37(6):1057--1091, 2019.

\bibitem{MR1600888}
Michel Ledoux.
\newblock Isoperimetry and {G}aussian analysis.
\newblock In {\em Lectures on probability theory and statistics
  ({S}aint-{F}lour, 1994)}, volume 1648 of {\em Lecture Notes in Math.}, pages
  165--294. Springer, Berlin, 1996.

\bibitem{MR1654527}
Terry~J. Lyons.
\newblock Differential equations driven by rough signals.
\newblock {\em Rev. Mat. Iberoamericana}, 14(2):215--310, 1998.

\bibitem{MR3498012}
Ivan Nourdin and David Nualart.
\newblock Fisher information and the fourth moment theorem.
\newblock {\em Ann. Inst. Henri Poincar\'{e} Probab. Stat.}, 52(2):849--867,
  2016.

\bibitem{MR3035750}
Ivan Nourdin, David Nualart, and Guillaume Poly.
\newblock Absolute continuity and convergence of densities for random vectors
  on {W}iener chaos.
\newblock {\em Electron. J. Probab.}, 18:no. 22, 19, 2013.

\bibitem{MR2962301}
Ivan Nourdin and Giovanni Peccati.
\newblock {\em Normal approximations with {M}alliavin calculus}, volume 192 of
  {\em Cambridge Tracts in Mathematics}.
\newblock Cambridge University Press, Cambridge, 2012.
\newblock From Stein's method to universality.

\bibitem{MR3003367}
Ivan Nourdin and Guillaume Poly.
\newblock Convergence in total variation on {W}iener chaos.
\newblock {\em Stochastic Process. Appl.}, 123(2):651--674, 2013.

\end{thebibliography}

\end{document}